\numberwithin{equation}{section}
\newtheorem{theorem}{Theorem}[section]
\newtheorem{proposition}[theorem]{Proposition}
\newtheorem{remark}[theorem]{Remark}
\newtheorem{lemma}[theorem]{Lemma}
\theoremstyle{definition}
\theoremstyle{remark}
\def\XXint#1#2#3{{\setbox0=\hbox{$#1{#2#3}{\int}$}
		\vcenter{\hbox{$#2#3$}}\kern-.5\wd0}}
\def\XXint#1#2#3{{\setbox0=\hbox{$#1{#2#3}{\int}$ }
		\vcenter{\hbox{$#2#3$ }}\kern-.6\wd0}}
\begin{document}
\title[Infinite time bubble towers in the fractional heat equation with critical exponent]{ Infinite time bubble towers in the fractional heat equation with critical exponent}

\author[L. Cai]{Li Cai}
\address{\noindent Li ~Cai,~School of Mathematics, Southeast University, Nanjing,
	210096, P. R. China.}
\email{230198817@seu.edu.cn}
	
\author[J. Wang]{Jun Wang}
\address{\noindent Jun ~Wang,~Institute of Applied System Analysis, Jiangsu University,
	Zhenjiang, Jiangsu, 212013, P.R. China.}
\email{wangmath2011@126.com }

\author[J.C. Wei]{Jun-cheng Wei}
\address{\noindent Jun-cheng ~Wei,~Department of Mathematics, University of British Columbia,
	Vancouver, B.C., V6T 1Z2, Canada}
\email{jcwei@math.ubc.ca}

\author[W. Yang]{Wen Yang}
\address{\noindent Wen ~Yang,~Wuhan Institute of Physics and Mathematics,  Innovation Academy for Precision Measurement Science and Technology, Chinese Academy of Sciences, Wuhan 430071, P. R. China}
\email{wyang@wipm.ac.cn}	

\thanks{The research of J. Wang is partially supported by NSFC No. 11971202 and Outstanding Young foundation of Jiangsu
	Province No. BK20200042. The research of J. Wei is partially supported by NSERC of Canada. The research of W. Yang is partially supported by NSFC No. 12171456, No. 12271369 and No. 11871470.}

\begin{abstract}
In this paper, we consider the fractional heat equation with critical exponent in $\mathbb{R}^n$ for $n>6s,s\in(0,1),$
\begin{equation*}
u_t=-(-\Delta)^su+|u|^{\frac{4s}{n-2s}}u,\quad (x,t)\in \mathbb{R}^n\times\mathbb{R}.
\end{equation*}
We construct a bubble tower type solution both for the forward and backward problem by establishing the existence of the sign-changing solution with multiple blow-up at a single point  with the form
\begin{equation*}
u(x,t)=(1+o(1))\sum_{j=1}^{k}(-1)^{j-1}\mu_j(t)^{-\frac{n-2s}{2}}U\left(\frac{x}{\mu_j(t)}\right)
\quad\mbox{as}\quad t\to+\infty,
\end{equation*}
and the positive solution with  multiple blow-up at a single point  with the form
\begin{equation*}
u(x,t)=(1+o(1))\sum_{j=1}^{k}\mu_j(t)^{-\frac{n-2s}{2}}U\left(\frac{x}{\mu_j(t)}\right)
\quad\mbox{as}\quad t\to-\infty,
\end{equation*}
respectively. Here $k\ge2$ is a positive integer,
$$U(y)=\alpha_{n,s}\left(\frac{1}{1+|y|^2}\right)^{\frac{n-2s}{2}},$$
and
\begin{equation*}
\mu_j(t)=\beta_j |t|^{-\alpha_j}(1+o(1))~\mathrm{as}~t\to\pm\infty, \quad	\alpha_j=\frac{1}{2s}\left(\frac{n-2s}{n-6s}\right)^{j-1}-\frac{1}{2s},
\end{equation*}
for some certain positive numbers $\beta_j,j=1,\cdots,k.$
\end{abstract}
\maketitle
{\bf Keywords}: Fractional heat equation, Blow up, Inner-outer gluing, Sign-changing solution.

\

\

\section{Introduction}
This paper deals with the analysis of solutions that exhibit infinite time blow-up in the fractional critical heat equation
\begin{equation}
\label{eq1.1}
u_t=-(-\Delta)^su+|u|^{p-1}u,\quad  (x,t)\in \mathbb{R}^n\times\mathbb{R},
\end{equation}
where $n>2s$, $s\in(0,1)$ and $p=\frac{n+2s}{n-2s}$ is the fractional critical Sobolev exponent. Here, for any point $x\in\mathbb{R}^n$, the fractional Laplace operator $(-\Delta)^s u(x)$ is defined as
\begin{equation*}
(-\Delta)^s u(x):=C(n,s)~\mbox{P.V.}\int_{\mathbb{R}^n}\frac{u(x)-u(y)}{|x-y|^{n+2s}}\,dy
\end{equation*}
with a suitable positive normalizing constant $C(n,s)=\frac{2^{2s}s\Gamma(\frac{n+2s}{2})}{\Gamma(1-s)\pi^{\frac{n}{2}}}.$
We refer to \cite{Nezza2012} for an introduction to the fractional Laplacian and to the appendix of \cite{Davila2015} for a heuristic physical motivation in nonlocal quantum mechanics of the fractional operator. Fractional parabolic problems and related ones have attracted a lot of attentions in recent years, we refer the readers to \cite{Banerjee2018,Barrios2014,Bogdan2010,Bonforte2017,Caffarelli2011,Caffarelli2013,Caffarelli2013a,Caffarelli2010,
	Caffarelli2011a,Chen2010,Silvestre2012} and the references therein. It is well-known that $\eqref{eq1.1}$ is the formal negative $L^2$-gradient flow of the functional
\begin{equation*}	J(u):=\frac{1}{2}\int_{\mathbb{R}^n}|(-\Delta)^{\frac{s}{2}}u|^2\,dx
-\frac{n-2s}{2n}\int_{\mathbb{R}^n}|u|^{\frac{2n}{n-2s}}\,dx,\quad \frac{d}{dt}J(u(\cdot,t))=-\int_{\mathbb{R}^n}|u_t|^2\,dx,
\end{equation*}
where the function space is
$$H^s(\mathbb{R}^n):=\left\{u\in L^2(\mathbb{R}^n):
\int_{\mathbb{R}^n}|(-\Delta)^{\frac{s}{2}}u|^2\,dx<\infty\right\}.$$
In particular, if the function $u(x,t)$ is independent of $t,$ then $\eqref{eq1.1}$ is the following semi-linear elliptic problem with fractional Laplacian,
\begin{equation}
\label{eq1.2}
(-\Delta)^sU=U^{\frac{n+2s}{n-2s}}\quad \mathrm{in}\quad\mathbb{R}^n.
\end{equation}
By the classical moving plane method, Chen-Li-Ou \cite{Chen2005} and Li \cite{Li2004} have shown that
$U(y)=\alpha_{n,s}\left(\frac{1}{1+|y|^2}\right)^{\frac{n-2s}{2}}$ is the bubble solution solving problem $\eqref{eq1.2},$ where $\alpha_{n,s}$ is constant depending only on $n$ and $s$.

When $s=1,$ $t>0,$ for general $p>0,$ $\eqref{eq1.1}$ is a special form of the Fujita equation, reads as
\begin{equation}
\label{eq1.3}
\left\{\begin{array}{ll}
u_t=\Delta u+|u|^{p-1}u,\quad & (x,t)\in \mathbb{R}^n\times(0,\infty),\\
u(x,0)=u_0,\quad& x\in\mathbb{R}^n.
\end{array}\right.
\end{equation}
Many works has been devoted to studying this problem about the blow-up rates, sets and profiles since Fujita's celebrated work \cite{fujita1966}. See for example, \cite{Pino2021,Pino2020,Pino2019,Giga1985,Giga2004,Merle1997,Matano2004,Suzuki2008} and the references therein. In \cite{cdm2020}, Cortazar-del Pino-Musso obtained the following result for \eqref{eq1.3} in bounded domain $\Omega$ equipped with the Dirichlet boundary condition. Let $G(x,y)$ be the Green's function of $-\Delta$ in $\Omega$ posed with the Dirichlet boundary condition and $H(x,y)$ be its regular part. For any $k$ distinct points in $q_1,\cdots,q_k$ in $\Omega$ and define
\begin{equation}
\label{eq1.5}
\mathcal{G}(q):=
\begin{pmatrix}
H(q_1,q_1)&-G(q_1,q_2)&\dots&-G(q_1,q_k)\\
-G(q_2,q_1)&H(q_2,q_2)&\dots&-G(q_2,q_k)\\
\vdots&\vdots&\ddots&\vdots\\
-G(q_k,q_1)&-G(q_k,q_2)&\dots&H(q_k,q_k) 
\end{pmatrix}.
\end{equation}
If $\mathcal{G}(q)$ is positive definite, they proved that there exists an initial datum $u_0$ and smooth parameter functions $\xi_j(t)\to q_j$, $0<\mu_j(t)\to0$ as $t\to+\infty,$ $j=1,\cdots,k$ such that there exists an infinite time blow up solution $u_q$ with the approximate form
$$u_q\approx  \sum_{j=1}^k\alpha_n\left(\frac{\mu_j(t)}{\mu_j(t)^2+
|x-\xi_j(t)|^2}\right)^{\frac{n-2}{2}},$$
where $\mu_j(t)=\beta_jt^{-\frac{1}{n-4}}(1+o(1))$ for certain positive constants $\beta_j$. Soon after this, Musso-Sire-Wei-Zheng-Zhou \cite{Musso2018} studied the fractional heat equation with critical exponent and obtained the counter-part result. In the above mentioned two works, the blow up happens at infinite time. Concerning the finite time blow-up solutions of the classical heat equation, there are two types of blow up in literature depending on the rate:
\begin{equation}
\label{1.blowup-type}
\begin{aligned}
&\mathrm{Type~I}:&~\lim\sup\limits_{t\to T}(T-t)^{\frac{1}{p-1}}
\|u(\cdot,t)\|_{L^\infty(\mathbb{R}^n)}<+\infty,\\
&\mathrm{Type~II}:&~\lim\sup\limits_{t\to T}(T-t)^{\frac{1}{p-1}}
\|u(\cdot,t)\|_{L^\infty(\mathbb{R}^n)}=+\infty.
\end{aligned}
\end{equation}
As the elliptic partial differential equation, the Sobolev critical exponent and the Joseph-Lundgren exponent play important roles in determining the existence of type I and type II blow up solutions. When the exponent $\frac{n+2}{n-2}$ is replaced by $p\in(1,\frac{n+2}{n-2}),$ i.e., in the subcritical case. The blow up is almost completely clear, for instance, we refer the readers to \cite{fk1992,Giga1985,Giga1987,Giga2004,quittner1999}. In this case, the solution always blows up in type I in this range. Beyond the Sobolev critical exponent, Matano and Merle classified the radial blow-up solution in \cite{Matano2011} and they found that when $p$ is between the Sobolev critical exponent and Joseph-Lundgren exponent, no type II blow up can occur for radially symmetric functions. In \cite{Pino2021}, del Pino-Musso-Wei constructed non-radial type II blow up solution for $p\in\left(\frac{n+2}{n-2},p_{JL}(n)\right)$ with $p_{JL}(n)$ denoting the Joseph-Lundgren exponent. For the Sobolev critical case $p=\frac{n+2}{n-2}$, by using the energy method Schweyer \cite{Schweyer2012} constructed the radial, type II finite blow up solution in $\mathbb{R}^4$. While for smooth bounded domain $\Omega\subset\mathbb{R}^5$,
del Pino-Musso-Wei in \cite{Pino2019} found the existence of finite time type II blow up solution.
The bubbling phenomena appears a lot in many other critical contexts, for example, harmonic map heat flow, Keller-Segel chemotaxis system, Shr\"odinger map and various geometric flows. We refer the readers for instance to \cite{Daskalopoulos2018,Davila2019,Davila2020,Giga1987,Kenig2008,Krieger2009,Merle2013} and references therein.

Consider the critical heat equation \eqref{eq1.3}, i.e., $p=\frac{n+2}{n-2}$, the behavior at infinite time for finite energy solutions is deeply related to the steady states of the Yamabe equation
\begin{equation}
\label{1.yamabe}
\Delta u+|u|^{\frac{4}{n-2}}u=0\quad \mbox{in}\quad \mathbb{R}^n.
\end{equation}
We have $u(x,t)$ along a sequences $t_n\to\infty$ of Palais-Smale type for the energy $J$. Struwe's famous profile of decomposition tells us that passing to a subsequence, there are finite energy solutions $U_1,\cdots,U_k$ of \eqref{1.yamabe}, positive scalars $\mu_j(t)$ and points $\xi_j(t)$ such that for $i\neq j$
\begin{equation*}
\left|\log\frac{\mu_i(t)}{\mu_j(t)}\right|
+\frac{\xi_i(t)-\xi_j(t)}{\mu_i(t)}\to+\infty\quad 
\mathrm{as}\quad  t=t_n\to+\infty
\end{equation*}
and
\begin{equation*}
u(x,t)=\sum_{j=1}^k\frac{1}{\mu_j(t)^{\frac{n-2}{2}}}
U_j\left(\frac{x-\xi_j(t)}{\mu_j(t)}\right)+o(1)\quad 
\mathrm{as}\quad  t=t_n\to+\infty,\quad o(1)\to0\quad
\mathrm{in}\quad L^{\frac{2n}{n-2}}(\mathbb{R}^n).
\end{equation*}
In order to investigate the precise way a profile decomposition like the above formula could take place, del Pino-Musso-Wei \cite{Pino2021a} establish a family of solutions whose soliton resolution is made out of least energy steady states, all centered at the origin point and present multiple blow up with different rates in the form of a bubble tower type. To be more precisely, they proved there exists a radially symmetric initial condition $u_0(x)$ such that the solution $u$ blows-up in infinite time exactly at $0$ with a profile of the form
\begin{equation*} u(x,t)=\sum_{j=1}^k(-1)^{j-1}\mu_j^{-\frac{n-2}{2}}
\left(\frac{\mu_j(t)}{\mu_j(t)^2+|x|^2}\right)^{\frac{n-2}{2}}+o(1)\quad \mbox{as}\quad t\to+\infty,
\end{equation*}
where $\mu_j(t)=\beta_jt^{-\alpha_j}(1+o(1)),$ $\alpha_j=\frac{1}{2}\left(\frac{n-2}{n-6}\right)^{j-1}-\frac{1}{2},$ $j=1,\cdots,k.$
In the backward direction, Sun-Wei-Zhang \cite{Liming2021} constructed a radial smooth positive ancient solution which blows up at the origin for energy critical semi-linear heat equation in $\mathbb{R}^n, n \ge7$. The profile reads as
\begin{equation*} u(x,t)=(1+O(|t|^{-\epsilon}))\sum_{j=1}^k\mu_j(t)^{-\frac{n-2}{2}}
\left(\frac{\mu_j(t)}{\mu_j(t)^2+|x|^2}\right)^{\frac{n-2}{2}}\quad \mbox{as}\quad t\to-\infty,
\end{equation*}
where $\epsilon>0$ small, $\mu_j(t)=\beta_j(-t)^{-\alpha_j}(1+o(1)),$ $\alpha_j=\frac{1}{2}\left(\frac{n-2}{n-6}\right)^{j-1}-\frac{1}{2},$ $j=1,\cdots,k.$ Other results about multiple blow-up phenomenon for ellptic and parabolic equations $\eqref{eq1.3}$ can be seen in \cite{Daskalopoulos2018,Pino2003,Ge2005,Iacopetti2016,Musso2010}.

A natural question is whether we can obtain the same conclusions for the fractional critical heat equation, exhibiting the bubble tower type solutions at infinite time both in forward and backward direction. In the current article, we shall answer both two questions affirmatively.  The first main result is stated as follows.
\begin{theorem}
\label{th1.1}
Let $n>6s,$ $k\ge 2,$ $s\in(0,1),t>0.$ There exists a initial condition $u_0(x)$ such that  the solution of $\eqref{eq1.1}$ that blows up  in infinite time exactly at $0$ with a profile of the form
\begin{equation}
\label{e1.6}
u(x,t)=(1+o(1))\sum_{j=1}^{k}(-1)^{j-1}\mu_j(t)^{-\frac{n-2s}{2}}U\left(\frac{x}{\mu_j(t)}\right)
\quad\mbox{as}\quad t\to+\infty
\end{equation}
for certain positive numbers $\beta_j,j=1,\cdots,k,$ we have $\mu_j(t)=\beta_j t^{-\alpha_j}(1+o(1)),$ where
\begin{equation*}
\alpha_j=\frac{1}{2s}\left(\frac{n-2s}{n-6s}\right)^{j-1}-\frac{1}{2s},\quad j=1,\cdots,k.
\end{equation*}
\end{theorem}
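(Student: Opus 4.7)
The plan is to follow the inner--outer gluing scheme adapted to the fractional parabolic setting, in the spirit of \cite{Pino2021a} for the local heat equation and \cite{Musso2018} for the fractional heat equation with distinct blow-up points. As a first approximation I take the bubble tower
\begin{equation*}
u_*(x,t) = \sum_{j=1}^{k} (-1)^{j-1} \mu_j(t)^{-\frac{n-2s}{2}} U\!\left(\frac{x}{\mu_j(t)}\right),
\end{equation*}
with the scales $\mu_j(t)$ treated as unknown functions obeying, at leading order, $\mu_j(t)\sim \beta_j t^{-\alpha_j}$. The exponents $\alpha_j$ are forced by balancing, at the $j$-th inner scale, the nonlinear interaction with the slower $(j-1)$-th bubble (which, for $|x|\lesssim \mu_j\ll\mu_{j-1}$, is approximately the constant $(-1)^{j-2}\alpha_{n,s}\mu_{j-1}^{-\frac{n-2s}{2}}$) against the non-autonomous error produced by $\partial_t u_*$, which near the $j$-th scale reads $-\mu_j^{-\frac{n-2s}{2}-1}\dot\mu_j Z_{n+1}(y)$ with $Z_{n+1}(y):=\tfrac{n-2s}{2}U(y)+y\cdot\nabla U(y)$. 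This balance gives the recursion $\alpha_j+\tfrac{1}{2s}=\tfrac{n-2s}{n-6s}\bigl(\alpha_{j-1}+\tfrac{1}{2s}\bigr)$, which together with the boundary value $\alpha_1=0$ (the outermost bubble has no slower partner to balance against) yields the exponents in the statement.

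Writing $u=u_*+\phi$, the correction $\phi$ must satisfy
\begin{equation*}
\partial_t \phi=-(-\Delta)^s\phi+p|u_*|^{p-1}\phi+N[\phi]-\mathcal{E}[u_*],\qquad \mathcal{E}[u_*]:=\partial_t u_*+(-\Delta)^s u_*-|u_*|^{p-1}u_*,
\end{equation*}
with $N[\phi]$ the quadratic remainder. I would look for $\phi$ in the glued form
\begin{equation*}
\phi(x,t)=\sum_{j=1}^{k}\mu_j(t)^{-\frac{n-2s}{2}}\phi_j\!\left(\frac{x}{\mu_j(t)},t\right)\eta_j(x,t)+\psi(x,t),
\end{equation*}
where $\eta_j$ is a smooth cut-off that equals one on a zone decoupling the $j$-th bubble from its neighbors (of order $\sqrt{\mu_{j-1}\mu_j}$ on the outer side and $\sqrt{\mu_j\mu_{j+1}}$ on the inner side), and $\psi$ is a global outer correction. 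Demanding separate cancellation on each inner scale produces an inner system
\begin{equation*}
\mu_j^{2s}\partial_t\phi_j=-(-\Delta)^s\phi_j+pU(y)^{p-1}\phi_j+h_j[\psi,\mu,\dot\mu],\qquad j=1,\dots,k,
\end{equation*}
in the self-similar variable $y=x/\mu_j$, coupled with a fractional heat equation for $\psi$ on $\mathbb{R}^n\times(T_0,\infty)$ whose right-hand side is small and time-decaying; the initial value $u_0$ is then the restriction $u(\cdot,T_0)$ of the constructed solution.

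Three ingredients are needed to close the scheme: (i) a linear theory for the inner problem providing solutions with sharp power decay $|\phi_j(y,t)|\lesssim (1+|y|)^{-a}$ modulo the kernel of $L_s:=(-\Delta)^s-pU^{p-1}$; (ii) a linear theory for the outer fractional heat equation in weighted $L^\infty$ spaces tailored to the polynomial decay of the fractional heat kernel; and (iii) a reduction of the orthogonality conditions forced by (i) to a triangular non-autonomous system of ODEs for the scales $\mu_j(t)$. The radial symmetry of the ansatz eliminates the translation kernel elements of $L_s$, so only the scaling mode $Z_{n+1}$ contributes, and its projection produces the ODE system; once written in self-similar time this system is perturbative around the explicit profile $\mu_j(t)=\beta_j t^{-\alpha_j}$ and is solved by contraction. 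The principal obstacle, as in \cite{Musso2018}, lies in step (i): the nonlocality of $(-\Delta)^s$ obstructs classical barrier and parabolic comparison arguments, and the required pointwise $(1+|y|)^{-a}$ decay for a suitable exponent $a\in(2s,n-2s)$ must be produced by duality against a backward fractional parabolic problem together with carefully chosen super-solutions; the condition $n>6s$ is used precisely to ensure an admissible range for $a$. Once (i)--(iii) are available, the whole inner--outer system is closed by a Schauder fixed-point argument, producing the solution asserted in the theorem.
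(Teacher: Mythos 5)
Your proposal follows essentially the same route as the paper: the same bubble-tower ansatz with the solvability condition for the scaling mode $Z_{n+1}$ producing the ODE $\mu_j^{2s-1}\dot\mu_j=-c(\mu_j/\mu_{j-1})^{\frac{n-2s}{2}}$ and hence the stated $\alpha_j$, the same inner--outer decomposition with cut-offs at the geometric-mean scales $\sqrt{\mu_{j-1}\mu_j}$, the same triangular ODE reduction for the scale corrections, and a Schauder fixed point to close the system. The only (non-mathematical) divergence is in where you locate the main difficulty: the inner linear theory you flag is essentially imported from the earlier fractional works, while the paper's chief new technical work is in the outer problem, where the non-compact support of $(-\Delta)^s\eta_j$ and the merely algebraic decay of the fractional heat kernel force the introduction of additional weight functions and a delicate Duhamel analysis.
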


In the backward direction, we have

\begin{theorem}
\label{th1.2}
Let $n>6s,k\ge 2,$ $s\in(0,1),t<0.$ There exists a positive solution of $\eqref{eq1.1}$ that blows up backward in infinite time exactly at $0$ with a profile of the form
\begin{equation*}
u(x,t)=(1+o(1))\sum_{j=1}^{k}\mu_j(t)^{-\frac{n-2s}{2}}U\left(\frac{x}{\mu_j(t)}\right)
\quad\mbox{as}\quad t\to-\infty
\end{equation*}
with $\mu_j(t)=\beta_j (-t)^{-\alpha_j}(1+o(1)),$ where $\alpha_j,\beta_j$ are introduced by Theorem $\ref{th1.1}.$
\end{theorem}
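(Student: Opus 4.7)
The plan is to mount an inner-outer gluing scheme parallel to the one used for Theorem~\ref{th1.1}, adapted to the backward problem $t\to-\infty$ with matching-sign (all positive) bubbles. Set
\[
U_*(x,t):=\sum_{j=1}^{k}\mu_j(t)^{-\frac{n-2s}{2}}U\!\left(\frac{x}{\mu_j(t)}\right),
\]
write $u=U_*+\phi$, and decompose $\phi=\psi+\sum_{j=1}^{k}\eta_j\tilde\phi_j$, where $\eta_j$ is a smooth cut-off supported in a parabolic neighborhood of the origin of size $\mu_j R(t)$ (with $R(t)\to\infty$ slowly as $t\to-\infty$) and $\tilde\phi_j$ is expressed in the inner variables $y_j=x/\mu_j$, $\tau_j=\int^{t}\mu_j^{-2s}(s)\,ds$. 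Substituting into \eqref{eq1.1} splits the problem into an outer equation for $\psi$ on $\mathbb{R}^n\times(-\infty,-T]$, whose potential is concentrated near the origin, and $k$ inner equations of the schematic form
\[
\mu_j^{2s}\partial_t\tilde\phi_j=-(-\Delta)^s\tilde\phi_j+pU^{p-1}\tilde\phi_j+\mathcal{H}_j[\psi,\dot\mu,\mathcal{E}],
\]
with $\mathcal{E}:=-\partial_tU_*-(-\Delta)^sU_*+U_*^p$ the error of the ansatz.

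First I would estimate $\mathcal{E}$. Because the bubble scales are well separated ($\mu_{j+1}/\mu_j\to 0$), the leading contributions come from the nonlinear interaction $U_*^p-\sum_j U_j^p$ and from the time-derivative term $\dot\mu_j\mu_j^{-1}\Lambda U$, where $\Lambda U:=\frac{n-2s}{2}U+y\cdot\nabla U$. In contrast to Theorem~\ref{th1.1}, every cross term in $\mathcal{E}$ is now strictly positive; this flips several sign constants in the reduced system but preserves its overall structure. I would then solve the outer problem for $\psi$, given the $\tilde\phi_j$, by backward Duhamel against the fractional heat kernel $p_s$ on $\mathbb{R}^n\times(-\infty,-T]$, using weighted $L^\infty$ estimates that exploit the two-sided bound $p_s(x,t)\asymp\min(t^{-n/(2s)},\,t|x|^{-n-2s})$ to capture the $|x|^{-(n-2s)}$ tail of $U$.

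For each inner equation I would invoke the linear theory developed in \cite{Musso2018} for $\partial_\tau\phi=-(-\Delta)^s\phi+pU^{p-1}\phi+h$: solvability in suitable weighted spaces holds provided $h$ is orthogonal to the bounded kernel element $Z_0:=\Lambda U$ (the $n$ translation modes vanish automatically because all bubbles are centered at the origin). Imposing the scalar orthogonality $\int_{\mathbb{R}^n}\mathcal{H}_j\,Z_0\,dy=0$ at leading order yields a triangular system of ODEs for $(\mu_1,\dots,\mu_k)$ whose self-similar ansatz $\mu_j(t)=\beta_j(-t)^{-\alpha_j}(1+o(1))$ and matching of exponents force the recursion
\[
\alpha_j=\frac{1}{2s}\left(\frac{n-2s}{n-6s}\right)^{j-1}-\frac{1}{2s},\qquad j=1,\dots,k,
\]
together with explicit positive constants $\beta_j$; the hypothesis $n>6s$ enters precisely to keep $\alpha_j$ well-defined and positive for $j\ge 2$.

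The construction closes by a fixed-point argument for the full system $(\psi,\tilde\phi_1,\dots,\tilde\phi_k,\mu_1,\dots,\mu_k)$ in a weighted Banach space on $\mathbb{R}^n\times(-\infty,-T]$, with contraction smallness supplied by a power $|t|^{-\delta}$, $\delta>0$. The main obstacle I anticipate is twofold. First, one must establish sharp linear estimates for the backward fractional heat flow from $-\infty$ in norms simultaneously compatible with the polynomial bubble tails and the inner parabolic scaling; nonlocality rules out Aleksandrov reflection and the pointwise comparison tricks available when $s=1$, so all bounds must come from the convolution representation and from $L^p$--$L^q$ smoothing of $p_s$. Second, the solution must remain positive: since all bubbles carry the same sign this reduces to showing $|\phi|\ll U_*$ pointwise, but enforcing this on the entire backward half-line requires the outer weight to encode the slowest-decaying bubble $\mu_1^{-(n-2s)/2}U(x/\mu_1)$ uniformly in space-time, not merely near the origin.
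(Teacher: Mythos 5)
Your proposal follows essentially the same route as the paper: the authors prove Theorem \ref{th1.2} by the identical inner--outer gluing scheme used for Theorem \ref{th1.1}, with the sign-flipped reduced ODE $\mu_j^{2s}\dot\mu_j=c\lambda_j^{\frac{n-2s}{2}}$, all-positive bubbles, the backward Duhamel representation $\Psi=\int_{-\infty}^t\int_{\mathbb{R}^n}K_s(x-y,t-l)\mathcal{G}'\,dy\,dl$, and the inner linear theory on $(-\infty,\tau_0')$ (see Remarks \ref{r1.2}, \ref{r1.3}, \ref{r1.4}). Only a notational caveat: what you call $Z_0=\Lambda U$ is the paper's $Z_{n+1}$ (the dilation kernel element), whereas the paper reserves $Z_0$ for the positive eigenfunction of $L_0$.
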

\begin{remark}
\label{r1.3}
For Theorems \ref{th1.1} and \ref{th1.2}, we find out that the proof shares a lot of similarities. Therefore we shall focus on the proof of the forward bubble tower case and state the differences for the backward case if necessary.
\end{remark}

The proof of Theorems $\ref{th1.1}$ and \ref{th1.2} is mainly based on the inner-outer gluing method. It is well-known that this method has a lot of powerful applications in various elliptic problems, see for instance \cite{Davila2015a,Pino2006,Pino2011,Pino2010} and the references therein. In recent years, this method has also been successfully applied to many parabolic equations, see \cite{Davila2019,Pino2020,Pino2019} and the references therein.

Let us close the introduction by mentioning some of the main steps and new ingredients in this work. In Section $2,$ we provide the first approximation for the solution and give the estimation on the error.  In order to improve the approximation, solvability conditions are required for the fractional elliptic linearized operator around the bubble, which yields that scaling parameter functions $\mu_j$ at the leading order. In Section $3,$ we apply the inner-outer gluing method and decompose the small perturbation in the perturbation in the form $\sum_{j=1}^k\frac{(-1)^{j-1}}{\mu_j^{\frac{n-2s}{2}}}\phi_j\left(\frac{x}{\mu_j},t\right)\eta_j+\Psi,$ where $\eta_j$ is a smooth cut-off function. Then $(\phi_j,\Psi)$ will satisfy a coupled nonlinear system: the outer problem $\Psi$ and the inner problem $\phi_j.$ In Section $4,5,$ we shall solve these two types of problems respectively and give the a priori estimate for the linearized problem. Some important but tedious computations are put in Appendix A and B. In Section $6,$ we solve the problem by the means of the fixed point argument. As \cite{Pino2021a}, the main contribution of this paper is to construct multiple sign-changing blow up solutions for the fractional heat equation with critical exponent. In the process of applying the inner-outer gluing method, we mainly overcome the following difficulties:
\begin{itemize}
\item [(1)] The study of the outer problem is already a very delicate issue for the classical case. Due to the fractional Laplacian is a nonlocal operator, $(-\Delta)^s\eta$ is no longer compactly supported for the smooth cut-off function $\eta$. Therefore, it is more complicated to  get   a point-wise estimate for the outer problem. To solve this issue, we introduce two new types of weight function for controlling the error arising from the cut-off function. These are very different from the classical setting. We believe such process might be useful in treating other blow up solutions to the fractional critical parabolic differential equation.
		
\item[(2)] Compared with the heat kernel of the classical parabolic kernel, we see that the heat kernel of the fractional heat operator $\partial_t+(-\Delta)^s$ is only algebraically decay at infinity, which makes the problem much more intricate. Indeed, in dealing with integration by the Duhamel's formula, we need to carry out a more delicate discussion.

	\end{itemize}

Next we give some notations. Throughout this paper, we denote $a\lesssim b$ if $a\le Cb$ for some positive constant $C.$ Denote $a\thickapprox b$ if $a\lesssim b\lesssim a.$ $\langle x\rangle=\sqrt{1+x^2},$ $\chi(s)$ denotes a smooth cut-off function such that $0\le\chi(s)\le 1,$
\begin{equation*}
\chi(s)=
\begin{cases}
1&\text{if $s\le 1,$}\\
0&\text{if $s\ge 2,$}
\end{cases}
\end{equation*}
and for a set $\Omega\subset\mathbb{R}^n,$ $1_{\Omega}$ will denote the characteristic function defined as
\begin{equation*}
1_{\Omega}(x)=
\begin{cases}
1&\text{if $x\in\Omega,$}\\
0&\text{if $x\in\mathbb{R}^n\backslash\Omega.$}
\end{cases}
\end{equation*}

\section{A first approximation and the ansatz}
Let
\begin{equation}
\label{eq2.1}
S[u]:=u_t+(-\Delta)^s u-f(u)=0\quad (x,t)\in \mathbb{R}^n\times(t_0,\infty),
\end{equation}
where
\begin{equation*}
f(u)=|u|^{p-1}u=|u|^{\frac{4s}{n-2s}}u
\end{equation*}
and the initial time $t_0>0$ is left as a parameter which will be  taken sufficiently large. For any integer $k\ge 2,$ let us consider $k$ positive functions
\begin{equation*}
\mu_k(t)<\mu_{k-1}(t)<\cdots<\mu_1(t),\quad \text{in}\quad(t_0,\infty),
\end{equation*}
which satisfy the relations below, and the explicit behavior will be determined later
\begin{equation}\label{eq2.2}
\mu_1\to 1,\quad\frac{\mu_{j+1}(t)}{\mu_j(t)}\to 0 ~\mathrm{as}~t\to+\infty,~j=1,\cdots,k-1.
\end{equation}
In the following we denote
\begin{equation*}
\vec{\mu}=(\mu_1,\cdots,\mu_k).
\end{equation*}
Moreover, we assume that for $j=1,\cdots,k,$ $\mu_{0,j}$ is the leading order of $\mu_j$ and has the similar property as $\mu_j$ above. We define the approximate solution by
\begin{equation}
\label{eq2.3}
\bar{U}=\sum_{j=1}^{k}U_j=\sum_{j=1}^{k}\frac{(-1)^{j-1}}{\mu_j^{\frac{n-2s}{2}}}U\left(\frac{x}{\mu_j}\right),
\end{equation}
where $U$ is given by $\eqref{eq1.2}.$ Next we will get a first approximation to a solution of $\eqref{eq2.1}$ of the form $\bar{U}+\varphi_0$, where $\varphi_0$ is introduced for reducing a part of the error $S[\bar{U}]$ which is created by the interaction of the bubbles $U_j$ and $U_{j-1},$ $j=2,\cdots,k.$ To get the correction $\varphi_0,$ we will need to fix the parameters $\mu_j$ at main order around certain explicit values.

Let us consider the geometric averages
\begin{equation*}
\left\{\begin{array}{lr}
\bar{\mu}_j:=\sqrt{\mu_j\mu_{j-1}},\quad\bar{\mu}_{0,j}:=\sqrt{\mu_{0,j}\mu_{0,j-1}}\quad  &\text{ for} \quad j=2,\cdots,k,\\
\bar{\mu}_1=\bar{\mu}_{0,1}=t^\delta,\quad\bar{\mu}_{k+1}=\bar{\mu}_{0,k+1}=0,
\end{array}\right.
\end{equation*}
where $\delta>0$ is a small constant. We introduce the cut-off functions
\begin{equation}
\label{eq2.4}
\chi_j(x,t)=
\begin{cases}	\chi\left(\frac{2|x|}{\bar{\mu}_{0,j}}\right)-\chi\left(\frac{2|x|}{\bar{\mu}_{0,j+1}}\right)
&\text{$j=2,\cdots,k-1,$}\\
\chi\left(\frac{2|x|}{\bar{\mu}_{0,k}}\right)&\text{$j=k.$}
\end{cases}
\end{equation}
These cut-off functions have the property
\begin{equation}
\label{eq2.5}
\chi_j(x,t)=
\begin{cases}
0&\text{if $|x|\le \frac{1}{2}\bar{\mu}_{0,j+1},$}\\
1&\text{if $\bar{\mu}_{0,j+1}\le|x|\le \frac{1}{2}\bar{\mu}_{0,j},$}\\
0&\text{if $|x|\ge\bar{\mu}_{0,j},$}
\end{cases}~j=1,\cdots,k-1,\quad \mbox{and}\quad
\chi_k(x,t)=
\begin{cases}
1&\text{if $|x|\le \frac12\bar{\mu}_{0,k},$}\\
0&\text{if $|x|\ge\bar{\mu}_{0,k}.$}
\end{cases}
\end{equation}
We look for a correction $\varphi_0$ of the form
\begin{equation}
\label{eq2.6}
\varphi_0=\sum_{j=2}^{k}\varphi_{0,j}\chi_j,
\end{equation}
where
\begin{equation*}
\varphi_{0,j}(x,t)=\frac{(-1)^{j-1}}{\mu_j(t)^{\frac{n-2s}{2}}}\phi_{0,j}\left(\frac{x}{\mu_j},t\right)
\end{equation*}
for certain functions $\phi_{0,j}(y,t)$ defined for $y\in\mathbb{R}^n$ which we will suitably determine. Let us write
\begin{equation}
\label{eq2.7}
S(\bar{U}+\varphi_0)=S(\bar{U})+\mathcal{L}_{\bar{U}}[\varphi_0]-N_{\bar{U}}[\varphi_0],
\end{equation}
where
\begin{equation*}
\mathcal{L}_{\bar{U}}[\varphi_0]=\partial_t\varphi_0+(-\Delta)_x^s\varphi_0-f'(\bar{U})\varphi_0,
\quad\mathrm{and}\quad
N_{\bar{U}}[\varphi_0]=f(\bar{U}+\varphi_0)-f'(\bar{U})\varphi_0-f(\bar{U}).
\end{equation*}
Using the homogeneity of the function $f,$ we observe that
\begin{equation*}
\begin{split}
S(\bar{U})=\left(\sum_{j=1}^k U_j\right)_t+(-\Delta)_x^s\left(\sum_{j=1}^k U_j\right)-f\left(\sum_{j=1}^kU_j\right)=\sum_{j=1}^k\partial_t U_j-\left[f\left(\sum_{j=1}^kU_j\right)-\sum_{j=1}^kf(U_j)\right].
\end{split}
\end{equation*}
Next we write $\mathcal{L}_{\bar{U}}[\varphi_0]$ using the form of $\varphi_0$ as follows
\begin{equation*}
\begin{split}	\mathcal{L}_{\bar{U}}[\varphi_0]&=\sum_{j=2}^k
\left[(-\Delta)_x^s\varphi_{0,j}-f'(U_j)\varphi_{0,j}\right]\chi_j
-\sum_{j=2}^k\left(f'(\bar{U})-f'(U_j)\right)\varphi_{0,j}\chi_j\\ &\quad+\sum_{j=2}^k\left[(-\Delta)_x^s\chi_j\varphi_{0,j}
-\left[-(-\Delta)_x^{\frac{s}{2}}\chi_j,-(-\Delta)_x^{\frac{s}{2}}\varphi_{0,j}\right]\right]
+\sum_{j=2}^k\partial_t(\varphi_{0,j}\chi_j).
\end{split}
\end{equation*}
Here
\begin{equation*} \left[-(-\Delta)_x^{\frac{s}{2}}f(x),-(-\Delta)_x^{\frac{s}{2}}g(x)\right]:
=C_{n,s}P.V.\int_{\mathbb{R}^n}\frac{[f(x)-f(y)][g(x)-g(y)]}{|x-y|^{n+2s}}\,dy.
\end{equation*}
Substituting $\bar U+\varphi_0$ into the equation \eqref{eq2.1}, we have the error expansion
\begin{equation}
\label{eq2.8}
\begin{split}
S(\bar{U}+\varphi_0)=I_1+I_2+I_3,
\end{split}
\end{equation}
where
\begin{equation*}
I_1:=\partial_t U_1-\sum_{j=2}^k(-(-\Delta)_x^s\varphi_{0,j}+f'(U_j)\varphi_{0,j}
-\partial_t U_j+f'(U_j)U_{j-1}(0))\chi_j,
\end{equation*}
\begin{equation*}
I_2:=\bar{E}_{11}-\sum_{j=2}^k(f'(\bar{U})-f'(U_j))\varphi_{0,j}\chi_j,
\end{equation*}
\begin{equation*} I_3:=\sum_{j=2}^k\left((-\Delta)_x^s\chi_j\varphi_{0,j}
-\left[-(-\Delta)_x^{\frac{s}{2}}\chi_j,-(-\Delta)_x^{\frac{s}{2}}\varphi_{0,j}\right]\right)
+\sum_{j=2}^k\partial_t(\varphi_{0,j}\chi_j)-N_{\bar{U}}[\varphi_0],
\end{equation*}
and
\begin{equation}
\label{eq2.9}	\bar{E}_{11}=-\left[f(\bar{U})-\sum_{j=1}^kf(U_j)-\sum_{j=2}^kf'(U_j)U_{j-1}(0)\chi_j
-\sum_{j=2}^k(1-\chi_j)\partial_t U_j\right].
\end{equation}
The functions $\varphi_{0,j}$ will be chosen to eliminate the main order terms in $I_1$ after conveniently restricting the range of variation of $\vec{\mu}.$
\begin{equation}
\label{eq2.10}
\begin{split}
E_j[\varphi_{0,j},\vec{\mu}]:&=-\left[-(-\Delta)_x^s\varphi_{0,j}+f'(U_j)\varphi_{0,j}
-\partial_t U_j+f'(U_j)U_{j-1}(0)\right]\\
&=\frac{(-1)^{j}}{\mu_j^{\frac{n+2s}{2}}}
\left[-(-\Delta)_{y_j}^s\phi_{0,j}+pU^{p-1}\phi_{0,j}
+\mu_j^{2s-1}\dot{\mu}_jZ_{n+1}(y_j)\right]_{y_j=\frac{x}{\mu_j}}\\
&\quad+\frac{(-1)^{j}}{\mu_j^{\frac{n+2s}{2}}}\left[-pU^{p-1}
\left(\frac{\mu_j}{\mu_{j-1}}\right)^{\frac{n-2s}{2}}U(0)\right]_{y_j=\frac{x}{\mu_j}},
\end{split}
\end{equation}
where
\begin{equation*}
Z_{n+1}(y_j)=\frac{n-2s}{2}U(y_j)+y_j \cdot\nabla U(y_j).
\end{equation*}
\medskip

In order to reduce the error term $I_1$, we consider the existence of solutions to the following  elliptic equation
\begin{equation}
\label{eq2.11}
\left\{\begin{array}{ll}
-(-\Delta)_y^s\phi+pU(y)^{p-1}\phi=-h_j(y,\mu)\quad& \text{in}\quad\mathbb{R}^n,\\
\phi\to0\quad& \text{as}\quad|y|\to\infty,
\end{array}\right.
\end{equation}
where
\begin{equation*} h_j(y,\mu):=\mu_j^{2s-1}\dot{\mu}_jZ_{n+1}(y)-pU(y)^{p-1}
\left(\frac{\mu_j}{\mu_{j-1}}\right)^{\frac{n-2s}{2}}U(0).
\end{equation*}
Then we define
\begin{equation*}
L_0[\phi]:=-(-\Delta)_y^s\phi+pU(y)^{p-1}\phi=-h_j(y,\mu).
\end{equation*}
From \cite{Davila2013}, we know that every bounded solution  of $L_0[\phi]=0$ in $\mathbb{R}^n$ is the linear combination of the functions $Z_1,\cdots,Z_{n+1},$
where
\begin{equation*}
Z_i(y):=\frac{\partial U}{\partial y_i}(y),\quad i=1,\cdots,n.
\end{equation*}
Furthermore, $\eqref{eq2.11}$ is solvable for $h_j=O(|y|^{-m}),$ $m>2s,$ if it holds that for $i=1,\cdots,n+1,$
\begin{equation*}
\int_{\mathbb{R}^n}h_j(y)Z_i(y)\,dy=0.
\end{equation*}
Using the parity of functions $U(y),\frac{\partial U}{\partial y},$ we only need to show the solvability condition
\begin{equation*}
\int_{\mathbb{R}^n}h_j(y,\mu)Z_{n+1}(y)\,dy=0.
\end{equation*}
The latter conditions hold if the parameters $\mu_j(t)$ satisfy the following relations: for all $j=2,\cdots,k,$
\begin{equation}
\label{eq2.12} \mu_1\to1,\quad\mu_j^{2s-1}\dot{\mu}_j=-c\lambda_j^{\frac{n-2s}{2}},\quad\lambda_j=\frac{\mu_j}{\mu_{j-1}},
\end{equation}
where
\begin{equation*} c=-U(0)\frac{p\int_{\mathbb{R}^n}U^{p-1}Z_{n+1}\,dy}{\int_{\mathbb{R}^n}Z_{n+1}^2\,dy}
=U(0)\frac{n-2s}{2}\frac{\int_{\mathbb{R}^n}U^p\,dy}{\int_{\mathbb{R}^n}Z_{n+1}^2\,dy}>0.
\end{equation*}
We look for a specific solution $\vec{\mu}_0=(\mu_{0,1},\cdots,\mu_{0,k})$ of $\eqref{eq2.12}$ in $(t_0,\infty)$, i.e.,
\begin{equation}
\label{eq2.13}
\mu_{0,j}(t)=\beta_jt^{-\alpha_j},\quad t\in(t_0,\infty),
\end{equation}
where
\begin{equation*}
\alpha_j=\frac{1}{2s}\left(\frac{n-2s}{n-6s}\right)^{j-1}-\frac{1}{2s},\quad j=1,\cdots,k,
\end{equation*}
and the numbers $\beta_j$ are determined by the recursive relations
\begin{equation*}
\beta_1=1,\quad\beta_j=(\alpha_jc^{-1})^{\frac{2}{n-6s}}\beta_{j-1}^{\frac{n-2s}{n-6s}}.
\end{equation*}
From $\eqref{eq2.12},$ we set $\lambda_{0,j}(t)=\frac{\mu_{0,j}}{\mu_{0,j-1}}(t)$ and
\begin{equation*}
h_j(y,\mu_0)=\lambda_{0,j}^{\frac{n-2s}{2}}\bar{h}(y),\quad\bar{h}(y)=-pU(0)U(y)^{p-1}-cZ_{n+1}(y).
\end{equation*}
Since $\int_{\mathbb{R}^n}\bar{h}Z_{n+1}(y)\,dy=0,$ there exists a solution $\bar{\phi}$ to the equation
\begin{equation*}
-(-\Delta)_y^s\bar{\phi}+pU(y)^{p-1}\bar{\phi}+\bar{h}(y)=0\quad \text{in}\quad \mathbb{R}^n.
\end{equation*}
Then we define $\phi_{0,j}$ as
\begin{equation}
\label{eq2.14}
\phi_{0,j}=\lambda_{0,j}^{\frac{n-2s}{2}}\bar{\phi}(y).
\end{equation}
Thus $\phi_{0,j}$ solves $\eqref{eq2.11}.$

In what follows we let the parameters $\mu_j$ in $\eqref{eq2.2}$ have the form $\vec{\mu}=\vec{\mu}_0+\vec{\mu}_1,$ namely
\begin{equation}
\label{eq2.15}
\mu_j(t)=\mu_{0,j}(t)+\mu_{1,j}(t),
\end{equation}
where the parameters $\mu_{1,j}(t)$ {\em to be determined} satisfy
\begin{equation}
\label{eq2.16}
|\mu_{1,j}(t)|\lesssim\mu_{0,j}(t)t^{-\sigma}
\end{equation}
for some small and fixed constant $0<\sigma<1.$ In addition, we see that for some positive number $c_j$
\begin{equation*}
\lambda_{0,j}=c_j t^{-\frac{2}{n-6s}(\frac{n-2s}{n-6s})^{j-2}}.
\end{equation*}
With these choices, the expression $E_j[\varphi_{0,j},\vec{\mu}]$ in $\eqref{eq2.10}$ can be decomposed as
\begin{equation*}
\begin{split}	&E_j[\varphi_{0,j},\vec{\mu}_0+\vec{\mu}_1]\\
&=\frac{(-1)^{j}}{\mu_j^{\frac{n+2s}{2}}}
\left[(\mu_j^{2s-1}\dot{\mu}_j-\mu_{0,j}^{2s-1}\dot{\mu}_{0,j})Z_{n+1}(y_j)
-(\lambda_j^{\frac{n-2s}{2}}-\lambda_{0,j}^{\frac{n-2s}{2}})pU^{p-1}(y_j)U(0)\right]\\
&=\frac{(-1)^{j}}{\mu_j^{\frac{n+2s}{2}}}\left[(\dot{\mu}_{0,j}[(\mu_{0,j}+\mu_{1,j})^{2s-1}
-\mu_{0,j}^{2s-1}]+\dot{\mu}_{1,j}(\mu_{0,j}+\mu_{1,j})^{2s-1})Z_{n+1}(y_j)\right]\\
&\quad+\frac{(-1)^{j}}{\mu_j^{\frac{n+2s}{2}}}
\left[-\frac{n-2s}{2}pU^{p-1}(y_j)U(0)\lambda_{0,j}^{\frac{n-2s}{2}}
\left(\frac{\mu_{1,j}}{\mu_{0,j}}-\frac{\mu_{1,j-1}}{\mu_{0,j-1}}\right)\right]\\
&\quad+\frac{(-1)^{j}}{\mu_j^{\frac{n+2s}{2}}}\left[U^{p-1}(y_j)\lambda_{0,j}^{\frac{n-2s}{2}}
O\left(\frac{|\mu_{1,j}|}{\mu_{0,j}}+\frac{|\mu_{1,j-1}|}{\mu_{0,j-1}}\right)^2\right]\\
&=\frac{(-1)^{j}}{\mu_j^{\frac{n+2s}{2}}}D_j[\vec{\mu}_1](y_j,t)
+\frac{(-1)^{j}}{\mu_j^{\frac{n+2s}{2}}}\Theta_j[\vec{\mu}_1](y_j,t),\quad y_j
=\frac{x}{\mu_j(t)},
\end{split}
\end{equation*}
where $j=2,\cdots,k,$ $\vec{\mu}_1=(\mu_{1,1},\mu_{1,2},\cdots,\mu_{1,k}),$
\begin{equation}
\label{eq2.17}
\begin{split}	D_j[\vec{\mu}_1](y_j,t):&=(\dot{\mu}_{0,j}[(\mu_{0,j}+\mu_{1,j})^{2s-1}
-\mu_{0,j}^{2s-1}]+\dot{\mu}_{1,j}[(\mu_{0,j}+\mu_{1,j})^{2s-1}])Z_{n+1}(y_j)\\
&\quad-\frac{n-2s}{2}pU^{p-1}(y_j)U(0)\lambda_{0,j}^{\frac{n-2s}{2}}
\left(\frac{\mu_{1,j}}{\mu_{0,j}}-\frac{\mu_{1,j-1}}{\mu_{0,j-1}}\right),
\end{split}
\end{equation}
\begin{equation}
\label{eq2.18} \Theta_j[\vec{\mu}_1](y_j,t):=-pU^{p-1}(y_j)\lambda_{0,j}^{\frac{n-2s}{2}}
O\left(\frac{|\mu_{1,j}|}{\mu_{0,j}}+\frac{|\mu_{1,j-1}|}{\mu_{0,j-1}}\right)^2.
\end{equation}
By straightforward computation, we get
\begin{equation}
\label{eq2.20}
\partial_t U_1=-\mu_1^{-\frac{n+2s}{2}}D_1[\vec{\mu}_1].
\end{equation}
where
\begin{equation}
\label{eq2.19}
D_1[\vec{\mu}_1](y_1,t):=(1+\mu_{1,1})^{2s-1}\dot{\mu}_{1,1}Z_{n+1}(y_1),\quad y_1=\frac{x}{\mu_1}.
\end{equation}

Based on the above argument, we define the approximate solution $u_*=u_*[\vec{\mu}]$ by
\begin{equation}
\label{eq2.21}
u_*=\bar{U}+\varphi_0,
\end{equation}
where $\bar{U}$ is defined by $\eqref{eq2.3},$ $\varphi_0$ has the form $\eqref{eq2.6},$ with $\phi_{0,j}$ defined by $\eqref{eq2.14}$ and $\mu_j$ defined by $\eqref{eq2.15}$ and $\eqref{eq2.16}.$

\begin{remark}
\label{r1.2}
As \cite{Liming2021}, we can also construct a bubble-tower type ancient solution to the fraction energy critical heat equation
\begin{equation}
\label{eq2.22}
u_t=-(-\Delta)^su+|u|^{p-1}u,\quad  (x,t)\in \mathbb{R}^n\times(-\infty,0),
\end{equation}
where $n>6s,s\in(0,1)$ and $p$ is the  fractional critical exponent $p:=\frac{n+2s}{n-2s}.$ There exists a solution which blows up backward in infinite time exactly at $0$ with the ansatz $\bar U$ written as
$\bar{U}=\sum_{j=1}^kU_j,$ where
\begin{equation*}
U_j(x,t)=\frac{1}{\mu_j^{\frac{n-2s}{2}}}U\left(\frac{x}{\mu_j}\right).
\end{equation*}
Similar to the calculation in Section $2,$ we obtain for $j=2,\cdots,k,$
\begin{equation*}		\mu_1\to1,\quad\mu_j^{2s-1}\dot{\mu}_j=c\lambda_j^{\frac{n-2s}{2}},\quad\lambda_j:=\frac{\mu_j}{\mu_{j-1}},
\end{equation*}
where $c=-U(0)\frac{p\int_{\mathbb{R}^n}U^{p-1}Z_{n+1}\,dy}{\int_{\mathbb{R}^n}Z_{n+1}^2\,dy}
=U(0)\frac{n-2s}{2}\frac{\int_{\mathbb{R}^n}U^p\,dy}{\int_{\mathbb{R}^n}Z_{n+1}^2\,dy}>0.$ The leading order term $\mu_{0,j}$ in $\mu_j$ is $\mu_{0,j}(t)=\beta_j(-t)^{-\alpha_j}.$
\end{remark}

\section{The inner-outer gluing system}
Let $u_*=u_*[\vec{\mu}_1]$ be defined in $\eqref{eq2.21}$, we shall try to find a solution to equation \eqref{eq2.1} with the form $u=u_*+\varphi$. In this case, the original problem turns to be
\begin{equation}
\label{eq3.1}
\left\{\begin{array}{ll}
S[u_*+\varphi]=\varphi_t+(-\Delta)_x^s\varphi- f'(u_*)\varphi-N_{u_*}[\varphi]+S[u_*]=0\quad & (x,t)\in \mathbb{R}^n\times(t_0,\infty),\\
\varphi(\cdot,t_0)=\varphi_*,\quad& x\in\mathbb{R}^n,
\end{array}\right.
\end{equation}
where
\begin{equation*}
N_{u_*}[\varphi]=f(u_*+\varphi)-f'(u_*)\varphi-f(u_*).
\end{equation*}
The function $\varphi_*(x)$ is an initial condition to be determined, and  we have $u_0=u_*(\cdot,t_0)+\varphi_*.$

We consider the cut-off functions $\eta_j,\zeta_j,j=1,\cdots,k,$ defined by
\begin{equation}
\label{eq3.2}
\eta_j(x,t)=\chi\left(\frac{|x|}{2R\mu_{0,j}(t)}\right)
\end{equation}
and
\begin{equation}
\label{eq3.3}
\zeta_j(x,t)=\begin{cases}
\chi\left(\frac{|x|}{R\mu_{0,j}}\right)-\chi\left(\frac{R|x|}{\mu_{0,j}}\right)&\text{if $j=1,\cdots,k-1,$}\\
\chi\left(\frac{|x|}{R\mu_{0,k}}\right)&\text{if $j=k.$}\\
\end{cases}
\end{equation}
We find that $\eta_j\zeta_j=\zeta_j,$ because
\begin{equation*}
\eta_j(x,t)=
\begin{cases}
1&\text{if $|x|\le 2R\mu_{0,j}(t),$}\\
0&\text{if $|x|\ge 4R\mu_{0,j}(t),$}
\end{cases}
\end{equation*}
and for $j=1,\cdots,k-1,$
\begin{equation}
\label{eq3.4}
\zeta_j(x,t)=
\begin{cases}
1&\text{if $2R^{-1}\mu_{0,j}(t)\le|x|\le R\mu_{0,j}(t),$}\\
0&\text{if $|x|\ge 2R\mu_{0,j}(t)~\,\mbox{or}~\,|x|\le R^{-1}\mu_{0,j}(t),$}
\end{cases}
\quad
\zeta_k(x,t)=
\begin{cases}
1&\text{if $|x|\le R\mu_{0,k}(t),$}\\
0&\text{if $|x|\ge 2R\mu_{0,k}(t).$}
\end{cases}
\end{equation}
Here $R$ is chosen to be a $t$-dependent, slowly growing function, say
\begin{equation}
\label{eq3.6}
R(t)=t^\epsilon,\quad t>t_0,
\end{equation}
where $\epsilon>0$ will be later on fixed sufficiently small.

Let $\phi_j(y_j,t),j=1,\cdots,k$ be defined in $B_{8R}\times(t_0,\infty)$ and a function $\Psi(x,t)$ be defined in $\mathbb{R}^n\times(t_0,\infty).$ Then we look for a solution $\varphi$ of $\eqref{eq3.1}$ of the form
\begin{equation}
\label{eq3.7}
\varphi=\sum_{j=1}^k\varphi_j\eta_j+\Psi,
\end{equation}
where
\begin{equation*}
\varphi_j(x,t)=\frac{(-1)^{j-1}}{\mu_j^{\frac{n-2s}{2}}}\phi_j(y_j,t).
\end{equation*}
Let us substitute $\varphi$ given by $\eqref{eq3.7}$ into equation $\eqref{eq3.1}$
\begin{equation*}
\begin{split}	&S[u_*+\varphi]\\&=\varphi_t+(-\Delta)_x^s\varphi-f'(u_*)\varphi-N_{u_*}[\varphi]+S[u_*]\\
&=\sum_{j=1}^k\partial_t\varphi_j\eta_j+\sum_{j=1}^k\varphi_j\partial_t\eta_j+\Psi_t
+\sum_{j=1}^k(-\Delta)_x^s\varphi_j\eta_j+\sum_{j=1}^k\varphi_j\cdot[(-\Delta)_x^s\eta_j]\\
&\quad-\sum_{j=1}^k\left[-(-\Delta)_x^{\frac{s}{2}}\varphi_j,-(-\Delta)_x^{\frac{s}{2}}\eta_j\right]
+(-\Delta)_x^s\Psi\\&\quad-\sum_{j=1}^kf'(u_*)\varphi_j\eta_j-f'(u_*)\Psi-N_{u_*}
\left[\sum_{j=1}^k\varphi_j\eta_j+\Psi\right]+S[u_*]\\
&\quad+\sum_{j=1}^k\eta_j\zeta_jf'(U_j)\Psi-\sum_{j=1}^k\eta_j\zeta_jf'(U_j)\Psi
+\sum_{j=1}^k\eta_jf'(U_j)\varphi_j-\sum_{j=1}^k\eta_jf'(U_j)\varphi_j\\
&\quad+\sum_{j=1}^k\eta_j\frac{(-1)^{j}}{\mu_j^{\frac{n+2s}{2}}}D_j[\vec{\mu}_1]
-\sum_{j=1}^k\eta_j\frac{(-1)^{j}}{\mu_j^{\frac{n+2s}{2}}}D_j[\vec{\mu}_1]\\
&=\sum_{j=1}^k\eta_j\cdot\frac{(-1)^{j}}{\mu_j^{\frac{n+2s}{2}}}
\left(-\mu_j^{2s}\partial_t\phi_j-(-\Delta)_{y_j}^s\phi_j+pU^{p-1}\phi_j
+\zeta_jp(-1)^{j-1}U^{p-1}\mu_j^{\frac{n-2s}{2}}\Psi+D_j\right)\\
&\quad+\Psi_t+(-\Delta)_x^s\Psi-V\Psi-B[\vec{\phi}]-N[\vec{\phi},\Psi,\vec{\mu}_1]+E^{out}.
\end{split}
\end{equation*}
Here we denote for $\bar{\phi}=(\phi_1,\cdots,\phi_k),\vec{\mu}_1=(\mu_{1,1},\cdots,\mu_{1,k}),$
\begin{equation}
\label{eq3.8}
\begin{split}	B[\vec{\phi}]:&=\sum_{j=1}^k[-(-\Delta)_x^{\frac{s}{2}}\eta_j,-(-\Delta)_x^{\frac{s}{2}}\varphi_j]
+(-\partial_t\eta_j-(-\Delta)_x^s\eta_j)\varphi_j\\&\quad+\sum_{j=1}^k\eta_j(f'(u_*)-f'(U_j))\varphi_j
-\sum_{j=1}^k\dot{\mu}_j\frac{\partial\varphi_j}{\partial\mu_j}\eta_j,
\end{split}
\end{equation}
\begin{equation}
\label{eq3.9}
N[\vec{\phi},\Psi,\vec{\mu}_1]=N_{u_*}\left(\sum_{j=1}^k\varphi_j\eta_j+\Psi\right),
\end{equation}
\begin{equation}
\label{eq3.10}
V=f'(u_*)-\sum_{j=1}^k\zeta_jf'(U_j)
\end{equation}
and
\begin{equation}\label{eq3.11}
E^{out}:=S[u_*]-\sum_{j=1}^k\frac{(-1)^{j}}{\mu_j^{\frac{n+2s}{2}}}D_j[\vec{\mu}_1]\eta_j,
\end{equation}
where $D_j[\vec{\mu}_1]$ are defined in $\eqref{eq2.17}$ and $\eqref{eq2.19}.$ It is not difficult to see that $S[u_*+\varphi]=0$ if the following system of $k+1$ equations are satisfied, for $j=1,2,\cdots,k,$
\begin{equation}
\label{eq3.12}
\mu_j^{2s}\partial_t\phi_j=-(-\Delta)_{y_j}^s\phi_j+pU(y_j)^{p-1}\phi_j+H_j[\Psi,\vec{\mu}_1] \quad \text{in}\quad B_{8R}\times(t_0,\infty),
\end{equation}
\begin{equation}
\label{eq3.13}
\Psi_t=-(-\Delta)_x^s\Psi+\mathcal{G}\quad \text{in}\quad\mathbb{R}^n\times(t_0,\infty),
\end{equation}
where
\begin{equation}
\label{eq3.14}
H_j[\Psi,\vec{\mu}_1]:=\zeta_jp(-1)^{j-1}U^{p-1}\mu_j^{\frac{n-2s}{2}}\Psi+D_j[\vec{\mu}_1],
\end{equation}
\begin{equation}
\label{eq3.15}
\mathcal{G}:=V\Psi+B[\vec{\phi}]+N[\vec{\phi},\Psi,\vec{\mu}_1]-E^{out}.
\end{equation}
In the next sections we will find a solution to this system with suitable choice of parameters $\vec{\mu}_1.$

\section{The linear outer problem}
In this section, we shall get proper  priori estimates of the associated linear problem of the outer problem $\eqref{eq3.13}.$ 	We consider the solution of
\begin{equation}
\label{eq5.1}
\Psi_t=-(-\Delta)_x^s\Psi+\mathcal{G},\quad \text{in}\quad\mathbb{R}^n\times(t_0,\infty),
\end{equation}
where $\mathcal{G}$ is defined in $\eqref{eq3.15}.$
Recall that the heat kernel to the fractional heat operator $\partial_t+(-\Delta)^s$ is given by
\begin{equation}
\label{eq5.2}
K_s(x,t)=\frac{t}{{(t^{\frac{1}{s}}+|x|^2)^{\frac{n+2s}{2}}}}.
\end{equation}
Then from Lemma $\ref{lea.3}-\ref{lea.8},$ we can find a nonzero initial condition $\Psi_0$ such that the solution $\Psi$ decays at infinity with respect to  time. Moreover, by Duhamel's formal,
\begin{equation}
\label{eq5.3} |\Psi(x,t)|=|\mathcal{T}^{out,*}[\mathcal{G}]|\lesssim
\mathcal{T}^{out}[\mathcal{G}](x,t):=\int_{t}^{\infty}\int_{\mathbb{R}^n}K_s(x-y,l-t)|\mathcal{G}(y,l)|\,dydl,
\end{equation}
where 
\begin{align*}
\mathcal{T}^{out,*}[\mathcal{G}]:=~&\int_{\mathbb{R}^n}K_s(x-y,t_0)\Psi_0\,dy+\int_{t_0}^{t}\int_{\mathbb{R}^n}K_s(x-y,t-l)\mathcal{G}(y,l)\,dy\,dl\\
=~&\int_{t}^{\infty}\int_{\mathbb{R}^n}K_s(x-y,l-t)\mathcal{G}(y,l)\,dydl.
\end{align*}
\begin{remark}
\label{r1.3}
For the equation $\eqref{eq2.22},$ we see that the outer problem corresponds to the equation
\begin{equation*}
\Psi_t=-(-\Delta)_x^s\Psi+\mathcal{G}',\quad \text{in}\quad\mathbb{R}^n\times(-\infty,t_0'),
\end{equation*}
where $t_0'<0$ and $\mathcal{G}'$ is a suitable term in view of Section $3.$ Thus by Duhamel's formal, we know that
\begin{equation*}
\Psi:=\int_{-\infty}^{t}\int_{\mathbb{R}^n}K_s(x-y,t-l)\mathcal{G}'(y,l)\,dydl.
\end{equation*}
\end{remark}

From section $A,$ we define the following types of weights to set up a topology for solving the outer problem $\eqref{eq3.13}$
\begin{equation*}
\begin{split}
\omega_{1,1}(x,t)=\frac{t^{-1-\sigma}}{1+|x|^{2s+\alpha}}1_{\{|x|\le2\bar{\mu}_{0,1}\}}\approx t^{\gamma_1}1_{\{|x|\le 1\}}+t^{\gamma_1}|x|^{-2s-\alpha}1_{\{1\le|x|\le2\bar{\mu}_{0,1}\}},
\end{split}
\end{equation*}
\begin{equation*}
\omega_{1,1}'(x,t)=t^{\gamma_1}\bar{\mu}_{0,1}^{n-2s-\alpha}|x|^{-s-n}1_{\{\bar{\mu}_{0,1}\le|x|\le t^{\frac{1}{2s}}\}}+t^{\gamma_1}|x|^{-n-2s}1_{\{|x|\ge t^{\frac{1}{2s}}\}},
\end{equation*}
\begin{equation*}
\begin{split}	\omega_{1,j}=\frac{t^{-\sigma}}{\mu_{0,j}^{\frac{n+2s}{2}}}\frac{\lambda_{0,j}^{\frac{n-2s}{2}}}
{1+\left(\frac{|x|}{\mu_{0,j}}\right)^{2s+\alpha}}1_{\{|x|\le2\bar{\mu}_{0,j}\}}
\approx\mu_{0,j}^{-2s}t^{\gamma_j}1_{\{|x|\le\mu_{0,j}\}}+\mu_{0,j}^\alpha t^{\gamma_j}|x|^{-2s-\alpha}1_{\{\mu_{0,j}\le|x|\le2\bar{\mu}_{0,j}\}},
\end{split}
\end{equation*}
\begin{equation*}
\omega_{1,j}'=\mu_{0,j}^n t^{\gamma_j}|x|^{-n-2s}1_{\{|x|\ge\bar{\mu}_{0,j}\}},\quad
\omega_{1,j}''=\bar{\mu}_{0,j}^{n}t^{\gamma_j}|x|^{-n-2s}1_{\{|x|\ge\bar{\mu}_{0,j}\}},
\end{equation*}
where $0<\alpha<s,$ $\sigma$ is small constant, $\gamma_1=-1-\sigma$ and $\gamma_j=\frac{n-2s}{2}\alpha_{j-1}-\sigma$ for $j=2,\cdots,k.$
\begin{equation*} \omega_{2,1}(x,t)=t^{-\sigma}\mu_{0,2}^{\frac{n}{2}-2s}\mu_{0,1}^{-s}|x|^{2s-n}
1_{\{\bar{\mu}_{0,2}\le|x|\le1\}},
\end{equation*}
\begin{equation*} \omega_{2,j}(x,t)=t^{-\sigma}\mu_{0,j+1}^{\frac{n}{2}-2s}\mu_{0,j}^{-s}|x|^{2s-n}
1_{\{\bar{\mu}_{0,j+1}\le|x|\le\bar{\mu}_{0,j}\}},
\end{equation*}
for $j=2,\cdots,k-1.$
\begin{equation*}
\omega_3(x,t)=t^{\delta(n-4s)}\cdot t^{-1-\sigma}|x|^{2s-n}1_{\{|x|\ge\bar{\mu}_{0,1}\}},
\end{equation*}
where $\delta(n-4s)<\sigma.$

Applying Lemma $\ref{leb.1}-\ref{leb.5},$ we have the following Lemma.
\begin{lemma}
\label{le5.1}
We have the following estimates:
\begin{equation}\label{eq5.4}
\mathcal{T}^{out}[\omega_{1,1}]\lesssim\omega_{1,1}^*:=
\begin{cases}
t^{\gamma_1}&\text{if $|x|\le 1,$}\\
t^{\gamma_1}|x|^{-\alpha}&\text{if $1\le|x|\le2\bar{\mu}_{0,1},$}\\
t^{\gamma_1+\delta(n-2s-\alpha)}|x|^{2s-n}&\text{if $|x|\ge2\bar{\mu}_{0,1},$}
\end{cases}
\end{equation}
\begin{equation}\label{eq5.5}
\mathcal{T}^{out}[\omega_{1,1}']\lesssim(\omega_{1,1}')^*:=
\begin{cases}
t^{\gamma_1-\delta(s+\alpha)}&\text{if $|x|\le\bar{\mu}_{0,1}$,}\\
t^{\gamma_1+(n-3s-\alpha)\delta}|x|^{2s-n}&\text{if $|x|\ge\bar{\mu}_{0,1},$}
\end{cases}
\end{equation}
\begin{equation}\label{eq5.7}
\mathcal{T}^{out}[\omega_{1,j}]\lesssim \omega_{1,j}^*:=
\begin{cases}
t^{\gamma_j}&\text{if $|x|\le4\bar{\mu}_{0,j},$}\\
t^{\gamma_j}\mu_{0,j}^\alpha\bar{\mu}_{0,j}^{n-2s-\alpha}|x|^{2s-n}&\text{if $|x|\ge4\bar{\mu}_{0,j},$}
\end{cases}\quad j=2,\cdots,k,
\end{equation}
\begin{equation}\label{eq5.8}
\mathcal{T}^{out}[\omega_{2,1}](x,t)\lesssim\omega_{2,1}^*:=
\begin{cases}
t^{-\sigma}&\text{if $|x|\le\bar{\mu}_{0,2},$}\\
t^{-\sigma}\bar{\mu}_{0,2}^{n-4s}|x|^{4s-n}&\text{if $\bar{\mu}_{0,2}\le|x|\le 1,$}\\
t^{-\sigma}\bar{\mu}_{0,2}^{n-4s}|x|^{2s-n}&\text{if $|x|\ge 1,$}
\end{cases}
\end{equation}
\begin{equation}
\label{eq5.9}
\mathcal{T}^{out}[\omega_{2,j}]\lesssim\omega_{2,j}^*:=
\begin{cases}
t^{-\sigma}\mu_{0,j}^{s-\frac{n}{2}}&\text{if $|x|\le\bar{\mu}_{0,j+1},$}\\
t^{-\sigma}\mu_{0,j+1}^{\frac{n}{2}-2s}\mu_{0,j}^{-s}|x|^{4s-n}&\text{if $\bar{\mu}_{0,j+1}\le|x|\le\bar{\mu}_{0,j},$}\\
t^{-\sigma}\mu_{0,j+1}^{\frac{n}{2}-2s}\mu_{0,j-1}^{s}|x|^{2s-n}&\text{if $|x|\ge\bar{\mu}_{0,j},$}
\end{cases}\quad j=2,\cdots,k-1,
\end{equation}
\begin{equation}\label{eq5.10}
\mathcal{T}^{out}[\omega_3]\lesssim \omega_3^*:= t^{\delta(n-4s)}\cdot
\begin{cases}
t^{-1-\sigma}\bar{\mu}_{0,1}^{4s-n}&\text{if $|x|\le\bar{\mu}_{0,1},$}\\
t^{-1-\sigma}|x|^{4s-n}&\text{if $\bar{\mu}_{0,1}\le|x|\le t^{\frac{1}{2s}},$}\\
t^{-\sigma}|x|^{2s-n}&\text{if $|x|\ge t^{\frac{1}{2s}},$}
\end{cases}
\end{equation}
\begin{equation}\label{eq5.11}
\begin{split}
\mathcal{T}^{out}[\omega_{1,j}']\lesssim (\omega_{1,j}')^*=
\begin{cases}
\mu_{0,j}^nt^{\gamma_j}\cdot\bar{\mu}_{0,j}^{-n}&\text{if $|x|\le \bar{\mu}_{0,j},$}\\
\mu_{0,j}^nt^{\gamma_j}\cdot\bar{\mu}_{0,j}^{-2s}|x|^{2s-n}&\text{if $|x|\ge \bar{\mu}_{0,j},$}
\end{cases}\quad j=2,\cdots,k,
\end{split}
\end{equation}
\begin{equation}\label{eq5.13}
\begin{split}
\mathcal{T}^{out}[\omega_{1,j}'']\lesssim(\omega_{1,j}'')^*=
\begin{cases}
t^{\gamma_j}&\text{if $|x|\le\bar{\mu}_{0,j},$}\\
\bar{\mu}_{0,j}^{n-2s}t^{\gamma_j}|x|^{2s-n}&\text{if $|x|\ge\bar{\mu}_{0,j},$}
\end{cases}\quad j=2,\cdots,k.
\end{split}
\end{equation}
\end{lemma}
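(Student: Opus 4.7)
The plan is to reduce each of the eight bounds to a direct application of the kernel convolution estimates in Lemmas \ref{leb.1}--\ref{leb.5}, which presumably quantify how the fractional heat semigroup acts on weights of the schematic form ``time factor times a power $|y|^{-\beta}$ on an annulus''. Every weight in the statement has exactly this structure after the decomposition indicated by the ``$\approx$'' displays, so the proof is largely bookkeeping: for each $\omega$, split its spatial profile into the pieces on which a power law holds, apply the appropriate lemma from Appendix B to convolve each piece with $K_s(x-y,l-t)$, and then sum the contributions.

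First I would treat $\omega_{1,1}$ as a model case. Writing $\omega_{1,1}(y,l) \approx l^{\gamma_1}\bigl[\mathbf{1}_{\{|y|\le 1\}} + |y|^{-2s-\alpha}\mathbf{1}_{\{1\le|y|\le 2\bar\mu_{0,1}\}}\bigr]$, apply the convolution lemma with $\beta = 0$ and $\beta = 2s+\alpha$ respectively. The three output regimes $|x|\le 1$, $1\le|x|\le 2\bar\mu_{0,1}$, and $|x|\ge 2\bar\mu_{0,1}$ correspond respectively to $x$ inside the core, $x$ inside the annulus (where the source decay $|x|^{-\alpha}$ survives after fractional integration), and $x$ outside the entire support (where the long-range tail $|x|^{2s-n}$ of $K_s$ dominates, with the prefactor $t^{\gamma_1+\delta(n-2s-\alpha)}$ arising because $\bar\mu_{0,1}(l)\sim l^\delta$ limits the source mass that can be absorbed into a ball of radius $|x|$). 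The bounds on $\omega_{1,j}$, $\omega_{1,j}'$, $\omega_{1,j}''$ for $j\ge 2$ then follow the same template after the self-similar rescaling $y = \mu_{0,j}z$ and $\tau = \mu_{0,j}^{2s}(l-t)$, which reduces them to the $j=1$ computation up to explicit powers of $\mu_{0,j}$ and $\bar\mu_{0,j}$.

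The weights $\omega_{2,j}$ are supported on the annuli $\bar\mu_{0,j+1}\le|y|\le\bar\mu_{0,j}$ with profile $|y|^{2s-n}$, which is precisely the Riesz kernel for $(-\Delta)^s$; one more fractional integration therefore produces the profile $|y|^{4s-n}$ seen on the middle line of the conclusions, while inside the inner ball the output saturates at the value $|y|^{4s-n}\bigr|_{|y|\sim \bar\mu_{0,j+1}}$, and outside the outer ball the standard $|x|^{2s-n}$ tail recovers. The weight $\omega_3$ is handled exactly as $\omega_{1,1}'$ but with an extra $t^{\delta(n-4s)}$ prefactor, and the three regimes $|x|\le\bar\mu_{0,1}$, $\bar\mu_{0,1}\le|x|\le t^{1/(2s)}$, $|x|\ge t^{1/(2s)}$ separate according to whether the self-similar length $t^{1/(2s)}$ of the heat kernel is smaller or larger than $|x|$.

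The main technical obstacle is matching all exponents: the time powers $\gamma_j$, the prefactors involving powers of $\mu_{0,j}$, $\mu_{0,j+1}$, $\bar\mu_{0,j}$, and the break scales in the output must line up with the statement. A subtle point is that $K_s$ has only algebraic (not Gaussian) time decay, so the time integral $\int_t^\infty dl$ must be split carefully: for $l-t \lesssim |x|^{2s}$ one uses the $L^1$-normalization of $K_s(\cdot,l-t)$ to get a bound by the source itself integrated in $l$, whereas for $l-t \gg |x|^{2s}$ one uses the spatial tail $|x-y|^{-n-2s}$ of the kernel and the fact that the source decays in $l$. Provided Lemmas \ref{leb.1}--\ref{leb.5} are stated in the form I am expecting (convolution of $K_s$ against weights of the above schematic form), the proof is then a mechanical case-by-case verification, with no essentially new difficulty beyond the kernel estimates themselves.
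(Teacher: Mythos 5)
Your proposal is correct and follows essentially the same route as the paper, which derives Lemma \ref{le5.1} precisely by decomposing each weight into power-law pieces of the form $t^{b}|x|^{-a}1_{\{c_1t^{d_1}\le|x|\le c_2t^{d_2}\}}$ and invoking Lemmas \ref{leb.1}, \ref{leb.3} and \ref{leb.5} (with $a\in\{0,2s+\alpha,n-2s,n+s,n+2s\}$) for each piece; the three output regimes you identify are exactly the "inside the core / inside the support / outside the support" cases of those lemmas. The only cosmetic difference is that you phrase the $j\ge2$ cases via a self-similar rescaling, whereas the appendix lemmas are already formulated for moving cutoffs $c\,l^{d}$ so the paper applies them directly with $d_1,d_2$ read off from the power-law behaviour of $\mu_{0,j}(l)$ and $\bar\mu_{0,j}(l)$.
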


Based on the results of Lemma $\ref{le5.1},$ for a function $h(x,t),$ we define the weighted $L^\infty$ norm $\|h\|_{\alpha,\sigma}^{out},$ $\|h\|_{\alpha,\sigma}^{out,*}$ as the following form respectively. For $(x,t)\in\mathbb{R}^n\times(t_0,\infty),$
\begin{equation*}
\|h\|_{\alpha,\sigma}^{out}:=\inf\left\{M||h(x,t)|\le M\left(\sum_{j=1}^{k}(\omega_{1,j}+\omega_{1,j}')
+\sum_{j=2}^k\omega_{1,j}''+\sum_{j=1}^{k-1}\omega_{2,j}+\omega_3\right)\right\},
\end{equation*}
\begin{equation*}
\|h\|_{\alpha,\sigma}^{out,*}:=\inf\left\{M||h(x,t)|\le M\left(\sum_{j=1}^k(\omega_{1,j}^*+(\omega_{1,j}')^*)+\sum_{j=2}^k(\omega_{1,j}'')^*
+\sum_{j=1}^{k-1}\omega_{2,j}^*+\omega_3^*\right)\right\}.
\end{equation*}
In addition, for a number $b>0$ and a function $g(t),$ we define
\begin{equation}
\label{eq4.14}
\|g\|_b:=\sup_{t\ge t_0}|t^b g(t)|.
\end{equation}
Then we introduce the norm for $\vec{\mu}_1:$
\begin{equation}
\label{eq4.15} \|\vec{\mu}_1\|_{\sigma}:=\sum_{i=1}^k(\|\dot{\mu}_{1,i}\|_{1+\alpha_i+\sigma}
+\|\mu_{1,i}\|_{\alpha_i+\sigma}),
\end{equation}
where $\sigma>0.$ In the last, applying Lemma $\ref{le5.1}$ and Lemma $\ref{lea.3}-\ref{lea.8},$ we have the following proposition.
\begin{proposition}
\label{pro5.2}
Suppose that $\sigma,\epsilon>0$ are small enough, $\|\vec{\mu}_1\|_{\sigma}\le 1$ and $t_0$ large enough. Then there exist  constants $l>0$ small enough and  $C>0,$ independent of $t_0$  such that the outer problem $\eqref{eq3.13}$ has a solution $\mathcal{T}^{out,*}[\mathcal{G}]$ in $\mathbb{R}^n\times(t_0,\infty)$ satisfying
\begin{equation*}
\|\mathcal{T}^{out,*}[\mathcal{G}]\|_{\alpha,\sigma}^{out,*}\le Ct_0^{-l}(1+\|\vec{\phi}\|_{a,\sigma}^{in}+\|\Psi\|_{\alpha,\sigma}^{out,*}
+(\|\vec{\phi}\|_{a,\sigma}^{in})^p+(\|\Psi\|_{\alpha,\sigma}^{out,*})^p).
\end{equation*}
\end{proposition}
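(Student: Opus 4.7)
The plan is to deduce Proposition \ref{pro5.2} from Lemma \ref{le5.1} by representing the solution of \eqref{eq3.13} via Duhamel's formula and then controlling each piece of the source $\mathcal{G}=V\Psi+B[\vec{\phi}]+N[\vec{\phi},\Psi,\vec{\mu}_1]-E^{out}$ in the weighted norm $\|\cdot\|_{\alpha,\sigma}^{out}$. From \eqref{eq5.3} we have the pointwise bound $|\mathcal{T}^{out,*}[\mathcal{G}]|\le \mathcal{T}^{out}[|\mathcal{G}|]$. Since $\mathcal{T}^{out}$ is linear and order-preserving, Lemma \ref{le5.1} immediately yields the schematic implication
\[
\|\mathcal{G}\|_{\alpha,\sigma}^{out}\le M \quad\Longrightarrow\quad \|\mathcal{T}^{out,*}[\mathcal{G}]\|_{\alpha,\sigma}^{out,*}\lesssim M,
\]
so the problem reduces to producing a pointwise bound of $|\mathcal{G}|$ by the basic weights $\omega_{1,j}$, $\omega_{1,j}'$, $\omega_{1,j}''$, $\omega_{2,j}$, $\omega_3$ with coefficient at most
$Ct_0^{-l}\bigl(1+\|\vec{\phi}\|_{a,\sigma}^{in}+\|\Psi\|_{\alpha,\sigma}^{out,*}+(\|\vec{\phi}\|_{a,\sigma}^{in})^p+(\|\Psi\|_{\alpha,\sigma}^{out,*})^p\bigr)$.

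I would treat the four constituents of $\mathcal{G}$ separately. First, the pure error $E^{out}$ defined in \eqref{eq3.11} is independent of $\vec{\phi},\Psi$; its regionwise structure (near each $U_j$, in the transition annuli between consecutive bubbles, and in the far field) is exactly what dictated the choice of weights in Section \ref{le5.1}. The computations in Appendix A produce pointwise bounds of the form $|E^{out}|\lesssim t_0^{-l}\sum(\omega_{1,j}+\omega_{1,j}'+\omega_{1,j}''+\omega_{2,j}+\omega_3)$, where the $t_0^{-l}$ factor is gained because $\varphi_0$ was designed precisely to cancel the leading order of $S[\bar U]$ and $\vec{\mu}_0$ solves \eqref{eq2.12}. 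Second, the nonlocal operator $B[\vec{\phi}]$ in \eqref{eq3.8} contains the commutator of $(-\Delta)^{s/2}$ with the cut-off $\eta_j$ and the non-compactly-supported term $(-\Delta)^s\eta_j\,\varphi_j$; these are controlled by $\|\vec{\phi}\|_{a,\sigma}^{in}$, losing the expected power of $R(t)=t^{\epsilon}$ in the transition region, and are absorbed by the $\omega_{1,j}''$ and $\omega_{1,j}'$ weights (which were introduced precisely to capture the tail created by the nonlocal cut-off — this is the central new difficulty compared to the classical case).

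Third, the potential term $V\Psi$: by \eqref{eq3.10}, $V=f'(u_*)-\sum_j\zeta_j f'(U_j)$ vanishes where a single bubble dominates and is of lower order in the transition regions, producing the bound $|V|\lesssim t_0^{-l}\bigl(\sum_j\mu_{0,j}^{-2s}\mathbf 1_{|x|\thickapprox\mu_{0,j}}+\cdots\bigr)$. Multiplying by $|\Psi|\le\|\Psi\|_{\alpha,\sigma}^{out,*}\sum(\omega_{1,j}^*+\cdots)$ and doing the region-by-region arithmetic shows that $V\Psi$ is dominated by the same basic weights with a coefficient $C t_0^{-l}\|\Psi\|_{\alpha,\sigma}^{out,*}$. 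Fourth, the nonlinearity $N[\vec\phi,\Psi,\vec\mu_1]$ in \eqref{eq3.9} is of order $(|\vec\phi|+|\Psi|)^2+(|\vec\phi|+|\Psi|)^p$; the superlinear gain combined with the size of $u_*$ produces a bound polynomial in the two norms, with an overall $t_0^{-l}$ factor stemming from $\mu_{0,1}\to 0$ and $R(t)^{-\text{something}}$.

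The main obstacle will be the bookkeeping for the nonlocal contributions $B[\vec{\phi}]$ and the far-field part of $V\Psi$: the heat kernel $K_s$ only decays algebraically (as observed in the introduction), so the Duhamel integrals in Lemma \ref{le5.1} do not gain much at large $|x|$ or large $l-t$, and we must show that the weights $\omega_{1,j}'$, $\omega_{1,j}''$, $\omega_3$ are already tight enough to absorb the algebraically decaying tails produced by $(-\Delta)^s\eta_j$. This is why two families of far-field weights ($\omega'$ supported past $\bar\mu_{0,j}$ but with a different scale factor than $\omega''$) were introduced. Once this region-by-region verification is completed, the proposition follows by summing the four contributions and applying Lemma \ref{le5.1} once more to pass from $\omega$-bounds on $\mathcal{G}$ to $\omega^*$-bounds on $\mathcal{T}^{out,*}[\mathcal{G}]$.
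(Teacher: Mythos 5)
Your proposal is correct and follows essentially the same route as the paper: the paper's proof consists precisely of bounding the four constituents of $\mathcal{G}$ via Lemmas \ref{lea.3}, \ref{lea.4}, \ref{lea.7} and \ref{lea.8} and then converting $\omega$-bounds into $\omega^*$-bounds through Lemma \ref{le5.1}. The only small refinement worth noting is that for $V\Psi$ the paper (Lemma \ref{lea.7}) estimates $\mathcal{T}^{out}[V\Psi]$ directly in the $\|\cdot\|_{\alpha,\sigma}^{out,*}$ norm, since one regional contribution is not literally one of the basic weights and must be fed through the heat-kernel estimates of Appendix B separately.
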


\section{The linear inner problem}
In this section, we show a fractional linear theory motivated by \cite{Davila2019,Chen2020c,Musso2018}  for the inner problem $\eqref{eq3.12}$. In order to solve the inner problem $\eqref{eq3.12},$ we consider the following equation
\begin{equation}
\label{eq4.1}
\mu^{2s}\phi_t=-(-\Delta)_y^s\phi+pU^{p-1}\phi+h(y,t),\quad \text{in}\quad B_{8R}(0)\times(t_0,\infty).
\end{equation}
We set
\begin{equation*}
\tau(t)=\tau_0+\int_{t_0}^{t}\mu^{-2s}\,dl
\end{equation*}
Then equation $\eqref{eq4.1}$ is transformed as
\begin{equation}
\label{eq4.2}
\phi_{\tau}=-(-\Delta)_y^s\phi+pU^{p-1}\phi+h(y,\tau),\quad\tau_0\le\tau,~|y|\le 8R.
\end{equation}
Recall that the linearized operator
\begin{equation*}
L_0:=-(-\Delta)^s+pU^{p-1}
\end{equation*}
has a only positive eigenvalue $\mu_0$ such that
\begin{equation*}
L_0(Z_0)=\mu_0 Z_0,\quad Z_0\in L^\infty(\mathbb{R}^n),
\end{equation*}
where the corresponding eigenfunction $Z_0$ is radially symmetric and
\begin{equation}
\label{eq4.3}
Z_0(y)\sim|y|^{-n-2s}\quad \text{as}\quad |y|\to+\infty,
\end{equation}
see \cite{Frank2015} for instance. Multiplying equation $\eqref{eq4.2}$ by $Z_0$ and integrating over $\mathbb{R}^n,$ we obtain that
\begin{equation*}
\dot{p}(\tau)-\mu_0p(\tau)=q(\tau),
\end{equation*}
where
\begin{equation*}
p(\tau)=\int_{\mathbb{R}^n}\phi(y,\tau)Z_0(y)\,dy
\quad\mbox{and}\quad
q(\tau)=\int_{\mathbb{R}^n}h(y,\tau)Z_0(y)\,dy.
\end{equation*}
Then we get
\begin{equation*}
p(\tau)=-\int_{\tau}^{\infty}e^{\mu_0(\tau-l)}q(l)\,dl.
\end{equation*}
As a consequence, the initial value $\phi(y,\tau_0)$ is determined by the equation below
\begin{equation*}
\int_{\mathbb{R}^n}\phi(y,\tau_0)Z_0(y)\,dy= e_0[h]:=\int_{\tau_0}^{\infty}e^{\mu_0 (\tau_0-l)}\int_{\mathbb{R}^n}h(y,l)Z_0(y)\,dydl.
\end{equation*}
Therefore, we consider the associated linear Cauchy problem of the inner problem $\eqref{eq3.12}$
\begin{equation}
\label{eq4.4}
\left\{\begin{array}{ll}
\phi_\tau=-(-\Delta)_y^s\phi+pU^{p-1}(y)\phi+h(y,\tau),\quad & (y,t)\in B_{8R}(0)\times(\tau_0,\infty),\\
\phi(y,\tau_0)=e_0Z_0(y),\quad& y\in B_{8R}(0).
\end{array}\right.
\end{equation}
Defining
\begin{equation*}
\|h\|_{a,\nu}:=\sup_{y\in B_{8R},\tau>\tau_0}\tau^\nu(1+|y|^a)|h(y,\tau)|.
\end{equation*}
In the sequel, we consider $h=h(y,\tau)$ as a function in the whole space $\mathbb{R}^n$ with zero extension outside of $B_{8R}$ for all $\tau>\tau_0.$ By the proof of Proposition $5.1$ in \cite{Chen2020c,Musso2018}, we can obtain a better estimate as follows.
\begin{proposition}
\label{pro4.1}
Assume $2s<a<n-2s,\nu>0,$ $\|h\|_{2s+a,\nu}<+\infty$ and
\begin{equation}
\label{eq4.8}
\int_{B_{8R}}h(y,\tau)Z_i(y)\,dy=0,~\forall\tau\in(\tau_0,\infty),\quad  i=1,\cdots,n+1.
\end{equation}
For sufficiently large $R,$ there exist $\phi=\phi[h](y,\tau)$ and $e_0=e_0[h](\tau)$ solving $\eqref{eq4.4}$ with
\begin{equation}
\label{eq4.5} (1+|y|^s)\left(\int_{\mathbb{R}^n}\frac{|\phi(y,\tau)-\phi(x,\tau)|^2}{|y-x|^{n+2s}}\,dx\right)^{\frac{1}{2}}
+(1+|y|)|\nabla_{y}\phi|\chi_{B_{8R}(0)}+|\phi(y,\tau)|\lesssim\tau^{-\nu}(1+|y|)^{-a}\|h\|_{2s+a,\nu}
\end{equation}
and
\begin{equation}
\label{eq4.6}
|e_0[h]|\lesssim\|h\|_{2s+a,\nu},
\end{equation}
for $(y,\tau)\in\mathbb{R}^n\times(\tau_0,\infty).$
\end{proposition}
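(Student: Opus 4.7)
The plan is to combine a projection procedure with a blow-up a priori estimate, as in the model arguments of \cite{Chen2020c,Musso2018}, refined to yield the extra gradient and Gagliardo controls on the left-hand side of \eqref{eq4.5}. The overall strategy exploits the spectral structure of $L_0=-(-\Delta)^s+pU^{p-1}$: a unique positive eigenvalue $\mu_0$ with bounded eigenfunction $Z_0$ decaying like $|y|^{-n-2s}$, an $(n+1)$-dimensional kernel spanned by $Z_1,\dots,Z_{n+1}$, and the rest of the spectrum strictly negative on the subspace orthogonal to $Z_0$ and to the kernel.

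First I would reduce to the autonomous equation \eqref{eq4.2} via the time change already introduced in the excerpt, extend $h$ by zero outside $B_{8R}$, and write $\phi=p(\tau)Z_0+\phi^\perp$ with $\phi^\perp\perp Z_0$. Testing against $Z_0$ and solving the scalar ODE backward in time yields the formula $p(\tau)=-\int_\tau^\infty e^{\mu_0(\tau-l)}q(l)\,dl$ derived before the proposition, which fixes $e_0=p(\tau_0)$ and guarantees the decay $|p(\tau)|\lesssim \tau^{-\nu}\|h\|_{2s+a,\nu}$, producing \eqref{eq4.6}. The component $\phi^\perp$ then solves a projected problem whose source $h-q(\tau)Z_0$ still has the decay $\tau^{-\nu}(1+|y|)^{-2s-a}$ and, thanks to the orthogonality assumption \eqref{eq4.8}, remains orthogonal to the kernel directions $Z_1,\dots,Z_{n+1}$.

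The heart of the proof is the weighted a priori estimate for $\phi^\perp$. I would argue by contradiction/blow-up: suppose there exist sequences $R_n\to\infty$, $\tau_{0,n}\to\infty$, and $(\phi_n,h_n)$ solving the projected equation with $\|h_n\|_{2s+a,\nu}\to 0$ but $\sup_{y,\tau}\tau^{\nu}(1+|y|)^{a}|\phi_n(y,\tau)|=1$. Pick $(y_n,\tau_n)$ where this supremum is nearly attained. A parabolic-scaling/compactness argument, using fractional Schauder/regularity estimates (locally in $y$) to pass to the limit in the nonlocal operator, produces a nontrivial bounded entire solution $\phi_\infty$ of $\partial_\tau\phi_\infty=L_0\phi_\infty$ on $\mathbb{R}^n\times\mathbb{R}$ with the appropriate decay $|\phi_\infty(y)|\lesssim(1+|y|)^{-a}$ and still orthogonal to $Z_0,Z_1,\dots,Z_{n+1}$. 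The spectral gap on the orthogonal subspace forces exponential decay in $\tau$, and then the classification of bounded elements of the kernel of $L_0$ (combined with the orthogonality conditions) gives $\phi_\infty\equiv 0$, a contradiction. This yields the pointwise bound on $|\phi|$ in \eqref{eq4.5}; the gradient bound $(1+|y|)|\nabla\phi|$ on $B_{8R}$ and the Gagliardo seminorm bound then follow from interior fractional regularity estimates applied to the equation satisfied by $\phi$, using the pointwise decay of $\phi$ and $h$.

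Once the a priori estimate is in hand, existence follows from a standard approximation plus fixed point step: solve a truncated problem (e.g.\ on a finite time interval $[\tau_0,T]$ with zero final condition, or via Galerkin in a basis of eigenfunctions of the Dirichlet spectral fractional Laplacian on $B_{8R}$), derive uniform estimates from the a priori bound, and pass to the limit $T\to\infty$. The main obstacle I expect is the nonlocal/blow-up step: because $(-\Delta)^s$ is nonlocal and $h$ is only defined after zero extension across $\partial B_{8R}$, one must be careful that the rescaled nonlocal operator converges to the full $(-\Delta)^s$ on $\mathbb{R}^n$ in the limit, and that the algebraic decay $|y|^{-a}$ with $2s<a<n-2s$ is preserved under the scaling (this is where the weighted space $\|\cdot\|_{2s+a,\nu}$ with exponent $2s+a$ on the right-hand side is chosen, so the gain of $2s$ from the fractional heat kernel reproduces the decay $(1+|y|)^{-a}$ on the left). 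The upgrade to the gradient and Gagliardo norms at the level of \eqref{eq4.5} also requires refined interior fractional parabolic estimates, which I would quote rather than reprove.
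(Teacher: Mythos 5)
Your proposal follows essentially the same route as the paper, which does not reprove this proposition but invokes the proof of Proposition 5.1 in \cite{Chen2020c,Musso2018}: projection onto the unstable mode $Z_0$ solved backward in time to determine $e_0[h]$ (exactly the computation the paper carries out before stating the result), a weighted a priori bound for the projected problem via a blow-up/contradiction argument resting on the nondegeneracy of the kernel of $L_0$, and interior fractional parabolic regularity to upgrade to the gradient and Gagliardo seminorm controls in \eqref{eq4.5}. The only caveat is that in the contradiction step one cannot literally invoke a spectral gap for exponential decay (the essential spectrum of $L_0$ reaches $0$); the cited works instead use a parabolic Liouville-type argument to show the limit is stationary before classifying it, but this is a refinement within the same strategy.
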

\begin{remark}
\label{r1.4}
For $\tau'\in(-\infty,\tau_0'),$ $\tau_0'$ is very negative, similar to the proof of Proposition \ref{pro4.1}, we have the following results concerning the ancient solution to the fractional heat equation. Consider
\begin{equation*}
\partial_{\tau'}\phi=-(-\Delta)_y^s\phi+pU^{p-1}(y)\phi+h'(y,\tau'),\quad \text{in}\quad B_{8R'}\times (-\infty,\tau_0').
\end{equation*}
For $R'$ is large enough, we have
\begin{equation*}
\begin{aligned}
&(1+|y|^s)\left(\int_{\mathbb{R}^n}\frac{|\phi(y,\tau')-\phi(x,\tau')|^2}{|y-x|^{n+2s}}\,dx\right)^{\frac{1}{2}}
+(1+|y|)|\nabla_{y}\phi|\chi_{B_{8R}(0)}+|\phi(y,\tau')|\\
&\lesssim(-\tau')^{-\nu}(1+|y|)^{-a}\|h\|_{2s+a,\nu},
\end{aligned}
\end{equation*}
where
\begin{equation*}
\|h\|_{a,\nu}':=\sup_{y\in B_{8R'},\tau'<\tau_0'}(-\tau')^\nu(1+|y|^a)|h(y,\tau')|.
\end{equation*}
\end{remark}
Next we will formulate the inner problem $\eqref{eq3.12}$ for the functions $\phi_j(y,t)$ using the setting introduced in Proposition $\ref{pro4.1}.$ Let us write problem $\eqref{eq3.12}$ in the form
\begin{equation}
\label{eq4.7} \mu_j^{2s}\partial_t\phi_j=-(-\Delta)_{y_j}^s\phi_j+pU(y_j)^{p-1}\phi_j+H_j[\Psi,\vec{\mu}_1](y_j,t),\quad \text{in}\quad B_{8R}\times(t_0,+\infty),\,j=1,2,\cdots,k,
\end{equation}
where $H_j[\Psi,\vec{\mu}_1](y_j,t)$ is defined in $\eqref{eq3.14}.$ First we modify the right hand side of $\eqref{eq4.7}$ to achieve the solvability conditions $\eqref{eq4.8},$ and introducing an initial condition as in $\eqref{eq4.4}.$ We consider the problem
\begin{equation}
\label{eq4.9}
\left\{\begin{array}{ll}
\mu_j^{2s}\partial_t\phi_j=-(-\Delta)_{y_j}^s\phi_j+pU^{p-1}(y_j)\phi_j+\mathcal{H},& (y_j,t)\in B_{8R}(0)\times(\tau_0,\infty),\\
\phi_j(y_j,t_0)=e_0Z_0(y_j),& y_j\in B_{8R}(0),
\end{array}\right.
\end{equation}
where
\begin{equation*}
\mathcal{H}:=H_j[\Psi,\vec{\mu}_1](y_j,t)-\sum_{i=1}^{n+1}d_{j,i}[\Psi,\vec{\mu}_1]Z_i,
\quad\mbox{and}\quad	d_{j,i}[\Psi,\vec{\mu}_1]:=\frac{\int_{B_{8R}}H_j[\Psi,\vec{\mu}_1](y_j,t)Z_i\,dy_j}{\int_{B_{8R}}Z_i^2\,dy_j}.
\end{equation*}
Let us denote by $\mathcal{T}_{\mu_j}^{in}$ the linear operator in Proposition $\ref{pro4.1}.$ Then $\eqref{eq4.9}$ is solved if the following equation holds
\begin{equation}
\label{eq4.10}
\phi_j=\mathcal{T}_{\mu_j}^{in}[H_j[\Psi,\vec{\mu}_1]],\quad j=1,\cdots,k.
\end{equation}
In  order to find a suitable solution to the original inner problem, we have to consider the following balanced condition
\begin{equation}
\label{eq4.11}
d_{j,i}[\Psi,\vec{\mu}_1]=0~\mathrm{for}~t\in(t_0,+\infty),\quad
i=1,\cdots,n+1,~j=1,\cdots,k.
\end{equation}
To solve $\eqref{eq4.11},$ it is enough to consider the indices with $i=n+1$, since the other ones are automatically zero due to the parity property. Then we have the following lemma

\begin{lemma}
\label{le4.1}
The equation $\eqref{eq4.11}$ is equivalent to
\begin{equation}
\label{eq4.13}
\left\{\begin{array}{lr}
\dot{\mu}_{1,1}=M_1[\Psi,\vec{\mu}_1](t),\\ \dot{\mu}_{1,j}+\frac{n-6s+2}{2}\frac{\alpha_j}{t}\mu_{1,j}
-\frac{n-2s}{2}\frac{\alpha_j}{t}\lambda_{0,j}\mu_{1,j-1}=M_j[\Psi,\vec{\mu}_1](t),\quad j=2,\cdots,k,
\end{array}\right.
\end{equation}
where
\begin{equation*}	M_1[\Psi,\vec{\mu}_1](t):=-\frac{\mu_1^{\frac{n-2s}{2}}}{(1+\mu_{1,1})^{2s-1}}
\frac{\int_{B_{8R}}\zeta_1pU^{p-1}\Psi\,dy_1}{\int_{B_{8R}}Z_{n+1}^2\,dy_1}
\end{equation*}
and
\begin{equation*}
\begin{split}		M_j[\Psi,\vec{\mu}_1](t):&=-\frac{\mu_j^{\frac{n-2s}{2}}}{\mu_{0,j}^{2s-1}}
\frac{\int_{B_{8R}}\zeta_jp(-1)^{j-1}U^{p-1}Z_{n+1}\Psi\,dy_j}{\int_{B_{8R}}Z_{n+1}^2\,dy_j}
-\dot{\mu}_{0,j}o\left(\frac{\mu_{1,j}}{\mu_{0,j}}\right)\\&\quad	-\dot{\mu}_{1,j}\left((2s-1)\frac{\mu_{1,j}}{\mu_{0,j}}
+o\left(\frac{\mu_{1,j}}{\mu_{0,j}}\right)\right)
-O(R^{-2s})\frac{\lambda_{0,j}^{\frac{n-2s}{2}}}{\mu_{0,j}^{2s-1}}
\left(\frac{\mu_{1,j}}{\mu_{0,j}}-\frac{\mu_{1,j-1}}{\mu_{0,j-1}}\right).
\end{split}
\end{equation*}
\end{lemma}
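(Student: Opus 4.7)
The plan is to translate the orthogonality conditions $d_{j,n+1}[\Psi,\vec{\mu}_1]=0$ into ODEs by splitting $H_j[\Psi,\vec{\mu}_1]=\zeta_j p(-1)^{j-1}U^{p-1}\mu_j^{\frac{n-2s}{2}}\Psi+D_j[\vec{\mu}_1]$ and computing the two resulting integrals separately. The $\Psi$-piece will produce exactly the source $M_j$ on the right-hand side (modulo the computation of a ratio of integrals), while the $D_j$-piece will supply the linear combination of $\dot{\mu}_{1,j}$, $\mu_{1,j}$, and $\mu_{1,j-1}$ on the left-hand side. The key algebraic ingredients are (i) the definition of the constant $c$ displayed just after \eqref{eq2.12}, (ii) the leading relation $\mu_{0,j}^{2s-1}\dot{\mu}_{0,j}=-c\lambda_{0,j}^{\frac{n-2s}{2}}$, and (iii) the identity $\dot{\mu}_{0,j}/\mu_{0,j}=-\alpha_j/t$ coming from $\mu_{0,j}(t)=\beta_j t^{-\alpha_j}$.

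For $j=1$, the formula \eqref{eq2.19} for $D_1$ is a pure $\dot{\mu}_{1,1}$-term, so the condition $d_{1,n+1}=0$ immediately gives
\[
(1+\mu_{1,1})^{2s-1}\dot{\mu}_{1,1}\int_{B_{8R}}Z_{n+1}^2\,dy_1=-\mu_1^{\frac{n-2s}{2}}\int_{B_{8R}}\zeta_1\,pU^{p-1}Z_{n+1}\Psi\,dy_1,
\]
which is the first identity. For $j\ge 2$ I would Taylor-expand
\[
(\mu_{0,j}+\mu_{1,j})^{2s-1}=\mu_{0,j}^{2s-1}\Bigl[1+(2s-1)\tfrac{\mu_{1,j}}{\mu_{0,j}}+o\bigl(\tfrac{\mu_{1,j}}{\mu_{0,j}}\bigr)\Bigr]
\]
inside \eqref{eq2.17}, multiply by $Z_{n+1}$ and integrate over $B_{8R}$. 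The dominant contribution is $\dot{\mu}_{1,j}\mu_{0,j}^{2s-1}\int Z_{n+1}^2$, together with the correction coming from the first bracket of $D_j$, which yields $(2s-1)\dot{\mu}_{0,j}\mu_{0,j}^{2s-1}\tfrac{\mu_{1,j}}{\mu_{0,j}}\int Z_{n+1}^2$ at leading order. The higher-order remainders will be absorbed into the two $o(\cdot)$-terms of $M_j$.

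The decisive step will be identifying the coefficient of the difference $\bigl(\mu_{1,j}/\mu_{0,j}-\mu_{1,j-1}/\mu_{0,j-1}\bigr)$. The second bracket of $D_j$ contributes $-\frac{n-2s}{2}\,pU(0)\,\lambda_{0,j}^{\frac{n-2s}{2}}$ times this difference, multiplied by the ratio $\int_{B_{8R}}U^{p-1}Z_{n+1}\,dy_j/\int_{B_{8R}}Z_{n+1}^2\,dy_j$. The cutoff forces this ratio to agree with its full-space counterpart up to an $O(R^{-2s})$ error, and on $\mathbb{R}^n$ the definition of $c$ from \eqref{eq2.12} rewrites it as $-c/(pU(0))$. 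After dividing the entire equation by $\mu_{0,j}^{2s-1}$ and invoking $\mu_{0,j}^{2s-1}\dot{\mu}_{0,j}=-c\lambda_{0,j}^{\frac{n-2s}{2}}$ and $\dot{\mu}_{0,j}/\mu_{0,j}=-\alpha_j/t$, the $\mu_{1,j}$-coefficient becomes $\alpha_j t^{-1}\bigl[\tfrac{n-2s}{2}-(2s-1)\bigr]=\tfrac{n-6s+2}{2}\alpha_j/t$, and the $\mu_{1,j-1}$-coefficient becomes $-\tfrac{n-2s}{2}\alpha_j\lambda_{0,j}/t$, exactly as claimed.

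The main obstacle is bookkeeping: one must carefully sort which pieces land on the left-hand side as the linear ODE part and which pieces have enough smallness (extra factor of $t^{-\sigma}$ from \eqref{eq2.16}, or the extra $R^{-2s}$ from the truncation) to be placed in $M_j$. Once the expansion is done term-by-term and the leading relations \eqref{eq2.12}, \eqref{eq2.13} for $\vec{\mu}_0$ are used, every remaining contribution fits either into the displayed $\Psi$-integral or into one of the three error buckets listed in the definition of $M_j$, and the reverse direction (substitute the ODE back into $d_{j,n+1}$) is immediate, giving the claimed equivalence.
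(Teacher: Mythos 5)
Your proposal is correct and follows essentially the same route as the paper: split $H_j$ into the $\Psi$-part and the $D_j$-part, project against $Z_{n+1}$, Taylor-expand $(\mu_{0,j}+\mu_{1,j})^{2s-1}$, replace the truncated ratio $\int_{B_{8R}}U^{p-1}Z_{n+1}/\int_{B_{8R}}Z_{n+1}^2$ by its full-space value via the definition of $c$ up to $O(R^{-2s})$, and invoke $\mu_{0,j}^{2s-1}\dot{\mu}_{0,j}=-c\lambda_{0,j}^{\frac{n-2s}{2}}$ together with $\dot{\mu}_{0,j}/\mu_{0,j}=-\alpha_j/t$ to extract the coefficients $\frac{n-6s+2}{2}\frac{\alpha_j}{t}$ and $-\frac{n-2s}{2}\frac{\alpha_j}{t}\lambda_{0,j}$. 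The bookkeeping you outline matches the paper's term-by-term collection, so no gap remains.
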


\begin{proof}
For $j=1,$ we get
\begin{equation*}
\begin{split}		d_{1,n+1}&=\frac{\int_{B_{8R}}(\zeta_1pU^{p-1}\mu_1^{\frac{n-2s}{2}}\Psi+D_1)Z_{n+1}\,dy_1}
{\int_{B_{8R}}Z_{n+1}^2\,dy_1}\\&
=\frac{(\int_{B_{8R}}\zeta_1pU^{p-1}\mu_1^{\frac{n-2s}{2}}\Psi
+(1+\mu_{1,1})^{2s-1}\dot{\mu}_{1,1}Z_{n+1})Z_{n+1}\,dy_1}{\int_{B_{8R}}Z_{n+1}^2\,dy_1}\\
&=(1+\mu_{1,1})^{2s-1}\dot{\mu}_{1,1}+\frac{\int_{B_{8R}}\zeta_1pU^{p-1}\mu_1^{\frac{n-2s}{2}}\Psi Z_{n+1}\,dy_1}{\int_{B_{8R}}Z_{n+1}^2\,dy_1},
\end{split}
\end{equation*}
which implies that $d_{1,n+1}=0$ is equivalent to
\begin{equation*}
\dot{\mu}_{1,1}=M_1[\Psi,\vec{\mu}_1](t).
\end{equation*}
For $j=2,\cdots,k,$ we have
\begin{equation*}
\begin{split}		d_{j,n+1}[\Psi,\vec{\mu}_1]&=\frac{\int_{B_{8R}}(\zeta_jp(-1)^{j-1}U^{p-1}\mu_j^{\frac{n-2s}{2}}\Psi+D_j)
Z_{n+1}\,dy_j}{\int_{B_{8R}}Z_{n+1}^2\,dy_j}\\
&=\frac{\int_{B_{8R}}(\zeta_jp(-1)^{j-1}U^{p-1}\mu_j^{\frac{n-2s}{2}}\Psi+\Pi\cdot Z_{n+1}(y_j))Z_{n+1}\,dy_j}{\int_{B_{8R}}Z_{n+1}^2\,dy_j}\\
&\quad-\frac{\int_{B_{8R}}(\frac{n-2s}{2}pU^{p-1}(y_j)U(0)\lambda_{0,j}^{\frac{n-2s}{2}}
(\frac{\mu_{1,j}}{\mu_{0,j}}-\frac{\mu_{1,j-1}}{\mu_{0,j-1}}))Z_{n+1}\,dy_j}{\int_{B_{8R}}Z_{n+1}^2\,dy_j},
\end{split}
\end{equation*}
where $$\Pi:=\dot{\mu}_{0,j}[(\mu_{0,j}+\mu_{1,j})^{2s-1}-\mu_{0,j}^{2s-1}]
+\dot{\mu}_{1,j}(\mu_{0,j}+\mu_{1,j})^{2s-1}.$$
Then we have
\begin{equation*}
\begin{split}		&\dot{\mu}_{0,j}[(\mu_{0,j}+\mu_{1,j})^{2s-1}-\mu_{0,j}^{2s-1}]+\dot{\mu}_{1,j}(\mu_{0,j}+\mu_{1,j})^{2s-1}\\
&\quad-\frac{n-2s}{2}\frac{U(0)\int_{B_{8R}}pU^{p-1}Z_{n+1}\,dy_j}{\int_{B_{8R}}Z_{n+1}^2\,dy_j}
\lambda_{0,j}^{\frac{n-2s}{2}}\left(\frac{\mu_{1,j}}{\mu_{0,j}}-\frac{\mu_{1,j-1}}{\mu_{0,j-1}}\right)\\
&=-\frac{\mu_j^{\frac{n-2s}{2}}\int_{B_{8R}}\zeta_jp(-1)^{j-1}U^{p-1}\Psi Z_{n+1}\,dy_j}{\int_{B_{8R}}Z_{n+1}^2\,dy_j}.
\end{split}
\end{equation*}
Since $|Z_{n+1}(y_j)|\lesssim\langle y_j\rangle^{2s-n},$
\begin{equation*}	\int_{B_{8R}}pU^{p-1}(y_j)Z_{n+1}(y_j)\,dy_j=\int_{\mathbb{R}^n}pU^{p-1}(y_j)Z_{n+1}(y_j)\,dy_j+O(R^{-2s}),
\end{equation*}
\begin{equation*}
\int_{B_{8R}}Z_{n+1}^2\,dy_j=\int_{\mathbb{R}^n}Z_{n+1}^2\,dy_j+O(R^{4s-n}),
\end{equation*}
which yields that
\begin{equation*}
-\frac{U(0)\int_{B_{8R}}pU^{p-1}Z_{n+1}(y_j)\,dy_j}{\int_{B_{8R}}Z_{n+1}^2(y_j)\,dy_j}=c+O(R^{-2s}),
\end{equation*}
where $c$ is the positive constant defined in $\eqref{eq2.12}.$ Then we can  deduce that
\begin{equation*}
\begin{split}		&\dot{\mu}_{0,j}[(\mu_{0,j}+\mu_{1,j})^{2s-1}-\mu_{0,j}^{2s-1}]
+\dot{\mu}_{1,j}(\mu_{0,j}+\mu_{1,j})^{2s-1}\\
&\quad-\frac{n-2s}{2}\frac{U(0)\int_{B_{8R}}pU^{p-1}Z_{n+1}\,dy_j}{\int_{B_{8R}}Z_{n+1}^2\,dy_j}
\lambda_{0,j}^{\frac{n-2s}{2}}\left(\frac{\mu_{1,j}}{\mu_{0,j}}-\frac{\mu_{1,j-1}}{\mu_{0,j-1}}\right)\\
&=\dot{\mu}_{0,j}\mu_{0,j}^{2s-1}\left[\left(1+\frac{\mu_{1,j}}{\mu_{0,j}}\right)^{2s-1}-1\right]
+\dot{\mu}_{1,j}\mu_{0,j}^{2s-1}\left(1+\frac{\mu_{1,j}}{\mu_{0,j}}\right)^{2s-1}\\
&\quad+\frac{n-2s}{2}(c+O(R^{-2s}))\lambda_{0,j}^{\frac{n-2s}{2}}\left(\frac{\mu_{1,j}}{\mu_{0,j}}
-\frac{\mu_{1,j-1}}{\mu_{0,j-1}}\right)\\
&=\dot{\mu}_{0,j}\mu_{0,j}^{2s-1}\left((2s-1)\frac{\mu_{1,j}}{\mu_{0,j}}
+o\left(\frac{\mu_{1,j}}{\mu_{0,j}}\right)\right)
+\dot{\mu}_{1,j}\mu_{0,j}^{2s-1}\left(1+(2s-1)\frac{\mu_{1,j}}{\mu_{0,j}}
+o\left(\frac{\mu_{1,j}}{\mu_{0,j}}\right)\right)\\
&\quad-\frac{n-2s}{2}\dot{\mu}_{0,j}\mu_{0,j}^{2s-1}
\left(\frac{\mu_{1,j}}{\mu_{0,j}}-\frac{\mu_{1,j-1}}{\mu_{0,j-1}}\right)
+O(R^{-2s})\lambda_{0,j}^{\frac{n-2s}{2}}\left(\frac{\mu_{1,j}}{\mu_{0,j}}
-\frac{\mu_{1,j-1}}{\mu_{0,j-1}}\right)\\&=\mu_{0,j}^{2s-1}\left(\dot{\mu}_{1,j}
+\frac{n-6s+2}{2}\frac{\alpha_j}{t}\mu_{1,j}-\frac{n-2s}{2}\frac{\alpha_j}{t}\lambda_{0,j}\mu_{1,j-1}\right)\\
&\quad+\mu_{0,j}^{2s-1}\left(\dot{\mu}_{0,j}o\left(\frac{\mu_{1,j}}{\mu_{0,j}}\right)+\dot{\mu}_{1,j}
\left((2s-1)\frac{\mu_{1,j}}{\mu_{0,j}}+o\left(\frac{\mu_{1,j}}{\mu_{0,j}}\right)\right)\right)\\
&\quad+\mu_{0,j}^{2s-1}\left(O(R^{-2s})\frac{\lambda_{0,j}^{\frac{n-2s}{2}}}{\mu_{0,j}^{2s-1}}
\left(\frac{\mu_{1,j}}{\mu_{0,j}}-\frac{\mu_{1,j-1}}{\mu_{0,j-1}}\right)\right).
\end{split}
\end{equation*}
This implies that
$d_{j,n+1}=0$ is equivalent to
\begin{equation*}	\dot{\mu}_{1,j}+\frac{n-6s+2}{2}\frac{\alpha_j}{t}\mu_{1,j}
-\frac{n-2s}{2}\frac{\alpha_j}{t}\lambda_{0,j}\mu_{1,j-1}=M_j.
\end{equation*}
\end{proof}

Next we solve $\eqref{eq4.13}$ by the fixed point theorem. We reformulate $\eqref{eq4.13}$ as the following mapping. Let us define $\vec{\mathcal{S}}[\Psi,\vec{\mu}_1]=(\mathcal{S}_1[\Psi,\vec{\mu}_1],\cdots,\mathcal{S}_k[\Psi,\vec{\mu}_1])$ with
\begin{equation}
\label{eq4.19}
\begin{split}
&\mathcal{S}_1[\Psi,\vec{\mu}_1](t)=\int_{t}^{\infty}M_1[\Psi,\vec{\mu}_1](l)\,dl,\\
&\mathcal{S}_j[\Psi,\vec{\mu}_1](t)=t^{-\frac{n-6s+2}{2}\alpha_j}
\int_{t_0}^{t}l^{\frac{n-6s+2}{2}\alpha_j}\left(\frac{n-2s}{2}\frac{\alpha_j}{l}\lambda_{0,j}(l)
\mathcal{S}_{j-1}[\Psi,\vec{\mu}_1](l)+M_j[\Psi,\vec{\mu}_1](l)\right)\,dl.
\end{split}
\end{equation}

\begin{lemma}
\label{le4.3}
Assume  that $\Psi$ and $\vec{\mu}_1$ satisfy $\|\Psi\|_{\alpha,\sigma}^{out,*}<\infty,$ $\|\vec{\mu}_1\|_{\sigma}\le 1,0<\sigma<1,$ there exists $C>0$ such that for $t_0$ large enough, $\epsilon$ small enough,
\begin{equation}
\label{eq4.16}
\|\vec{\mathcal{S}}[\Psi,\vec{\mu}_1]\|_{\sigma}\le C(\|\Psi\|_{\alpha,\sigma}^{out,*}+O(R^{-2s})).
\end{equation}
\end{lemma}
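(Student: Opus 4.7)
The plan is to first bound each $M_j[\Psi,\vec{\mu}_1]$ pointwise in $t$ and then insert those bounds into the integral representations defining $\mathcal{S}_j$; the upper bound on $\dot{\mathcal{S}}_j$ comes for free from the ODE itself. The main quantitative goal is to show $|M_j(t)|\lesssim (\|\Psi\|_{\alpha,\sigma}^{out,*}+O(R^{-2s})\,\|\vec{\mu}_1\|_\sigma)\,t^{-1-\alpha_j-\sigma}$, from which everything else will follow.

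For $M_1$, since $\mu_1\approx 1$ and $\zeta_1$ is supported in the annulus $\{R^{-1}\le|x|\le 2R\}\subset\{|x|\le 2\bar\mu_{0,1}\}$, the weight $\omega_{1,1}^\ast$ dominates and gives $|\Psi(\mu_1 y_1,t)|\lesssim \|\Psi\|_{\alpha,\sigma}^{out,*}\,t^{\gamma_1}=\|\Psi\|_{\alpha,\sigma}^{out,*}\,t^{-1-\sigma}$ on $|y_1|\le 8R$. The integral $\int U^{p-1}|Z_{n+1}|\,dy_1$ is finite because $U^{p-1}Z_{n+1}=O(|y|^{-n-2s})$ at infinity, which already puts $|M_1|\lesssim \|\Psi\|_{\alpha,\sigma}^{out,*}\,t^{-1-\sigma}$. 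Since $\alpha_1=0$, integrating from $t$ to $\infty$ gives the announced weight $t^{-\sigma}$ for $\mathcal{S}_1$ and, by direct differentiation, the weight $t^{-1-\sigma}$ for $\dot{\mathcal{S}}_1$. For $j\ge 2$, the same argument applied on $\mathrm{supp}\,\zeta_j\subset\{R^{-1}\mu_{0,j}\le|x|\le 2R\mu_{0,j}\}\subset\{|x|\le 4\bar\mu_{0,j}\}$ produces the control $|\Psi|\lesssim \|\Psi\|_{\alpha,\sigma}^{out,*}\,t^{\gamma_j}$ with $\gamma_j=\tfrac{n-2s}{2}\alpha_{j-1}-\sigma$. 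The prefactor $\mu_j^{(n-2s)/2}/\mu_{0,j}^{2s-1}\approx \mu_{0,j}^{(n-6s+2)/2}\approx t^{-(n-6s+2)\alpha_j/2}$ combines with $t^{\gamma_j}$ through the \emph{algebraic identity}
\begin{equation*}
(n-6s)\alpha_j-(n-2s)\alpha_{j-1}=2 \qquad\Longleftrightarrow\qquad \gamma_j=\tfrac{n-6s}{2}\alpha_j-1-\sigma,
\end{equation*}
which is a direct consequence of the recursion $\alpha_j=\tfrac{n-2s}{n-6s}\alpha_{j-1}+\tfrac{2}{n-6s}$. This yields $|\text{first term of }M_j|\lesssim \|\Psi\|_{\alpha,\sigma}^{out,*}\,t^{-1-\alpha_j-\sigma}$. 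The remaining three terms of $M_j$ are controlled using $|\mu_{1,j}|/\mu_{0,j}\lesssim t^{-\sigma}\|\vec{\mu}_1\|_\sigma$, $|\dot\mu_{0,j}|\lesssim t^{-1-\alpha_j}$, $|\dot\mu_{1,j}|\lesssim t^{-1-\alpha_j-\sigma}\|\vec{\mu}_1\|_\sigma$, and the identity just stated handles the last term with its $O(R^{-2s})$ factor; the quadratic pieces are absorbed using $t_0^{-\sigma}$.

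With the uniform bound $|M_j(t)|\lesssim (\|\Psi\|_{\alpha,\sigma}^{out,*}+O(R^{-2s}))\,t^{-1-\alpha_j-\sigma}$ in hand, I propagate the estimate for $\mathcal{S}_j$ by induction on $j$. Assuming $|\mathcal{S}_{j-1}(t)|\lesssim (\|\Psi\|_{\alpha,\sigma}^{out,*}+O(R^{-2s}))\,t^{-\alpha_{j-1}-\sigma}$, the coupling term satisfies
\begin{equation*}
\Bigl|\tfrac{n-2s}{2}\tfrac{\alpha_j}{l}\lambda_{0,j}(l)\mathcal{S}_{j-1}(l)\Bigr|\lesssim l^{\alpha_{j-1}-\alpha_j}\cdot l^{-\alpha_{j-1}-\sigma-1}= l^{-\alpha_j-1-\sigma},
\end{equation*}
so the integrand in \eqref{eq4.19} is dominated by $l^{(n-6s+2)\alpha_j/2-1-\alpha_j-\sigma}=l^{(n-6s)\alpha_j/2-1-\sigma}$. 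Because $(n-6s)\alpha_j/2>\sigma$ for $\sigma$ small, this integral from $t_0$ to $t$ is of order $t^{(n-6s)\alpha_j/2-\sigma}$; multiplication by $t^{-(n-6s+2)\alpha_j/2}$ collapses exactly to $t^{-\alpha_j-\sigma}$, closing the induction and giving $\|\mathcal{S}_j\|_{\alpha_j+\sigma}\le C(\|\Psi\|_{\alpha,\sigma}^{out,*}+O(R^{-2s}))$. The derivative estimate follows at once by reading $\dot{\mathcal{S}}_j$ from \eqref{eq4.13} written as $\dot{\mathcal{S}}_j=-\tfrac{n-6s+2}{2}\tfrac{\alpha_j}{t}\mathcal{S}_j+\tfrac{n-2s}{2}\tfrac{\alpha_j}{t}\lambda_{0,j}\mathcal{S}_{j-1}+M_j$, each term of which is controlled by $t^{-1-\alpha_j-\sigma}$ times the desired factor.

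The hard part is not any single calculation but the exponent bookkeeping: all cancellations hinge on the identity $(n-6s)\alpha_j-(n-2s)\alpha_{j-1}=2$, and one must check that every one of the auxiliary weights $\omega_{1,j}^\ast,\omega_{1,j}'^{\,\ast},\omega_{1,j}''^{\,\ast},\omega_{2,j}^\ast,\omega_3^\ast$ restricted to $\mathrm{supp}\,\zeta_j$ still produces no contribution worse than $t^{\gamma_j}$. The other delicate point is verifying that the $\vec{\mu}_1$-dependent remainder terms in $M_j$ are genuinely absorbed by $O(R^{-2s})=O(t^{-2s\epsilon})$ or by $t_0^{-\sigma}$ smallness, so that the final bound depends linearly only on $\|\Psi\|_{\alpha,\sigma}^{out,*}$ plus the geometric $O(R^{-2s})$ error, as stated.
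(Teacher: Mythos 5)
Your proposal is correct and follows essentially the same route as the paper: bound $\Psi$ on $\mathrm{supp}\,\zeta_j$ by $t^{\gamma_j}\|\Psi\|_{\alpha,\sigma}^{out,*}$ via the weight comparisons, deduce $|M_j|\lesssim t^{-1-\alpha_j-\sigma}(\|\Psi\|_{\alpha,\sigma}^{out,*}+O(R^{-2s}))$ using the recursion $(n-6s)\alpha_j-(n-2s)\alpha_{j-1}=2$, and close by induction on $j$ through the integral formula for $\mathcal{S}_j$, with $\dot{\mathcal{S}}_j$ read off from the ODE. The exponent bookkeeping you carry out is exactly the cancellation the paper uses, so no gap here.
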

\begin{proof}
For $j=1,$ we find that the support of $\zeta_1$ is contained in $\{R^{-1}\mu_{0,1}\le|x|\le 2R\mu_{0,1}\}.$ Then applying Lemma $\ref{lea.5}$ and $\ref{lea.6},$ we have
\begin{equation*}
\begin{split}		|\Psi|\lesssim(\omega_{1,1}^*+\omega_3^*+\omega_{2,1}^*+(\omega_{1,2}'')^*)\|\Psi\|_{\alpha,\sigma}^{out,*}\lesssim t^{-1-\sigma}\|\Psi\|_{\alpha,\sigma}^{out,*}.
\end{split}
\end{equation*}
By the definition of $M_1,$ we get
\begin{equation*}
|M_1[\Psi,\vec{\mu}_1]|\lesssim t^{-1-\sigma}\|\Psi\|_{\alpha,\sigma}^{out,*}.
\end{equation*}
Then using the definition of $\mathcal{S}_1,$ $\eqref{eq4.14},$ and $\eqref{eq4.15},$ we deduce that
\begin{equation}
\label{eq4.17}	\|\dot{\mathcal{S}}_1[\Psi,\vec{\mu}_1]\|_{1+\sigma}
+\|\mathcal{S}_1[\Psi,\vec{\mu}_1]\|_{\sigma}\lesssim\|\Psi\|_{\alpha,\sigma}^{out,*}.
\end{equation}
	
For $j=2,\cdots,k,$ the support of $\zeta_j$ is contained in $\{R^{-1}\mu_{0,j}\le|x|\le 2R\mu_{0,j}\}.$ Using Lemma $\ref{lea.5}$ and $\ref{lea.6},$ we deduce that
\begin{equation*}
\begin{split}
|\Psi| \lesssim &
\begin{cases}			((\omega_{1,j}'')^*+(\omega_{1,j+1}'')^*+\omega_{2,j}^*+\omega_{2,j-1}^*)\|\Psi\|_{\alpha,\sigma}^{out,*},
~&\mbox{if}\quad j=2,\cdots,k-1,\\
((\omega_{1,k}'')^*+\omega_{2,k-1}^*)\|\Psi\|_{\alpha,\sigma}^{out,*},\quad&\mbox{if}\quad j=k,
\end{cases}\\
\lesssim&~
t^{\gamma_{j}}\|\Psi\|_{\alpha,\sigma}^{out,*},\quad  j=1,\cdots,k.
\end{split}
\end{equation*}
Combining with the definition of $M_j,$ for $\epsilon$ small enough, we have
\begin{equation*}
\begin{split}
|M_j[\Psi,\vec{\mu}_1]|\lesssim t^{-\sigma}\mu_{0,j}^{1-2s}\lambda_{0,j}^{\frac{n-2s}{2}}\|\Psi\|_{\alpha,\sigma}^{out,*}
+t^{-1-\alpha_j-\sigma}O(R^{-2s})\lesssim t^{-1-\alpha_j-\sigma}(\|\Psi\|_{\alpha,\sigma}^{out,*}+O(R^{-2s})).
\end{split}
\end{equation*}
Now we shall use the induction method to show that
\begin{equation}
\label{eq4.18}		
\|\dot{\mathcal{S}}_j[\Psi,\vec{\mu}_1]\|_{1+\alpha_j+\sigma}
+\|\mathcal{S}_j[\Psi,\vec{\mu}_1]\|_{\alpha_j+\sigma}\lesssim\|\Psi\|_{\alpha,\sigma}^{out,*}+O(R^{-2s}).
\end{equation}
The case $j=1$ has been already proved by $\eqref{eq4.17}.$ Then we suppose that the conclusion holds up to $j-1$ with $j\ge2$, i.e.,
\begin{equation*}	\|\dot{\mathcal{S}}_{j-1}[\Psi,\vec{\mu}_1]\|_{1+\alpha_{j-1}+\sigma}
+\|\mathcal{S}_{j-1}[\Psi,\vec{\mu}_1]\|_{\alpha_{j-1}+\sigma}\lesssim\|\Psi\|_{\alpha,\sigma}^{out,*}
+O(R^{-2s}).
\end{equation*}
For $\sigma$ small enough we have
\begin{equation*}
\begin{split}
|\mathcal{S}_j[\Psi,\vec{\mu}_1]|\lesssim t^{-\frac{n-6s+2}{2}\alpha_j}\int_{t_0}^{t}l^{\frac{n-6s+2}{2}\alpha_j}\cdot l^{-\alpha_j-1-\sigma}\,dl(\|\Psi\|_{\alpha,\sigma}^{out,*}+O(R^{-2s}))\lesssim t^{-\alpha_j-\sigma}(\|\Psi\|_{\alpha,\sigma}^{out,*}+O(R^{-2s})).
\end{split}
\end{equation*}
Similarly, we also get
\begin{equation*}
|\dot{\mathcal{S}}_j[\Psi,\vec{\mu}_1]|\lesssim t^{-1-\alpha_j-\sigma}(\|\Psi\|_{\alpha,\sigma}^{out,*}+O(R^{-2s})).
\end{equation*}
Thus we get the conclusion holds up to $j$ and \eqref{eq4.18} is proved. As a consequence, the lemma is established.
\end{proof}

\begin{lemma}
\label{le4.4}
There exists $t_0>0$ large enough such that $t>t_0,$
\begin{equation*}
|H_j[\Psi,\vec{\mu}_1]|\lesssim \mu_{0,j}^{\frac{n-2s}{2}}t^{\gamma_j}\langle y_j\rangle^{-4s}(\|\vec{\mu}_1\|_{\sigma}+\|\Psi\|_{\alpha,\sigma}^{out,*}),\quad j=1,\cdots,k.
\end{equation*}
\end{lemma}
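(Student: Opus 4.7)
The plan is to split $H_j[\Psi,\vec{\mu}_1]$ into its two constituent pieces — the outer-correction piece $H_j^{(1)}:=\zeta_j p(-1)^{j-1}U^{p-1}\mu_j^{\frac{n-2s}{2}}\Psi$ and the error piece $H_j^{(2)}:=D_j[\vec{\mu}_1]$ — and bound each one separately against the weight $\mu_{0,j}^{\frac{n-2s}{2}}t^{\gamma_j}\langle y_j\rangle^{-4s}$. The key observation that will make the two estimates share the same scale is the recursion relation $\mu_{0,j}^{2s-1}\dot{\mu}_{0,j}=-c\lambda_{0,j}^{\frac{n-2s}{2}}$ from \eqref{eq2.12}, which is equivalent to the algebraic identity $\tfrac{n-2s}{2}(\alpha_j-\alpha_{j-1})=2s\alpha_j+1$, and in turn gives $\mu_{0,j}^{\frac{n-2s}{2}}t^{\gamma_j}\approx t^{-1-2s\alpha_j-\sigma}$ on the right-hand side of the claimed bound.

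\textbf{Estimate of $H_j^{(1)}$.} On the support of $\zeta_j$, the variable $y_j=x/\mu_j$ stays in a fixed annulus $R^{-1}\lesssim|y_j|\lesssim R$ (or in a ball of radius $\lesssim R$ for $j=k$), and $|x|\sim\mu_{0,j}$. The computation performed in the proof of Lemma~\ref{le4.3} — exactly the identification of which of the outer-weights $\omega_{\cdot,\cdot}^{*}$ dominate on the support of $\zeta_j$ — already shows that $|\Psi(x,t)|\lesssim t^{\gamma_j}\|\Psi\|_{\alpha,\sigma}^{out,*}$ throughout this set, for every $j=1,\dots,k$ (with $\gamma_1=-1-\sigma$). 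Combining this with $U^{p-1}(y_j)\lesssim\langle y_j\rangle^{-4s}$ and $\mu_j=\mu_{0,j}(1+O(t^{-\sigma}))\asymp\mu_{0,j}$ immediately yields
\[
|H_j^{(1)}|\lesssim\mu_{0,j}^{\frac{n-2s}{2}}t^{\gamma_j}\langle y_j\rangle^{-4s}\|\Psi\|_{\alpha,\sigma}^{out,*}.
\]

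\textbf{Estimate of $H_j^{(2)}=D_j$.} Since $Z_{n+1}(y)=\tfrac{n-2s}{2}U(y)+y\cdot\nabla U(y)$ decays like $\langle y\rangle^{-(n-2s)}$, and since $n>6s$ forces $\langle y\rangle^{-(n-2s)}\le\langle y\rangle^{-4s}$, both $|Z_{n+1}|$ and $|U^{p-1}|$ are controlled by $\langle y_j\rangle^{-4s}$. For $j=1$, $\alpha_1=0$ and $\mu_{0,1}\equiv 1$, so $|D_1|\lesssim|\dot\mu_{1,1}|\langle y_1\rangle^{-4s}\lesssim \|\vec{\mu}_1\|_\sigma\,t^{-1-\sigma}\langle y_1\rangle^{-4s}$, which is precisely the desired bound with $\gamma_1=-1-\sigma$. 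For $j\ge 2$ I would Taylor-expand
\[
(\mu_{0,j}+\mu_{1,j})^{2s-1}-\mu_{0,j}^{2s-1}=(2s-1)\mu_{0,j}^{2s-2}\mu_{1,j}+O(\mu_{0,j}^{2s-3}\mu_{1,j}^2),
\]
use $|\mu_{1,j}|\le\|\vec{\mu}_1\|_\sigma t^{-\alpha_j-\sigma}$, $|\dot\mu_{1,j}|\le\|\vec{\mu}_1\|_\sigma t^{-1-\alpha_j-\sigma}$, $|\dot\mu_{0,j}|\asymp t^{-1-\alpha_j}$ and $\mu_{0,j}\asymp t^{-\alpha_j}$, to bound the first bracket in \eqref{eq2.17} by $\|\vec{\mu}_1\|_\sigma t^{-1-(2s-1)\alpha_j-\alpha_j-\sigma}=\|\vec{\mu}_1\|_\sigma t^{-1-2s\alpha_j-\sigma}$. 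For the last bracket, $\lambda_{0,j}^{(n-2s)/2}\asymp t^{-(n-2s)(\alpha_j-\alpha_{j-1})/2}$ and $|\mu_{1,j}/\mu_{0,j}|+|\mu_{1,j-1}/\mu_{0,j-1}|\lesssim\|\vec{\mu}_1\|_\sigma t^{-\sigma}$, so the same bracket is bounded by $\|\vec{\mu}_1\|_\sigma t^{-(n-2s)(\alpha_j-\alpha_{j-1})/2-\sigma}$.

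\textbf{Matching time scales.} The scale-matching is the only real content of the argument: differentiating $\mu_{0,j}=\beta_j t^{-\alpha_j}$ and substituting into \eqref{eq2.12} gives $\tfrac{n-2s}{2}(\alpha_j-\alpha_{j-1})=2s\alpha_j+1$, hence both brackets above have the common decay $t^{-1-2s\alpha_j-\sigma}$. Using the definition $\gamma_j=\tfrac{n-2s}{2}\alpha_{j-1}-\sigma$, I then compute
\[
\mu_{0,j}^{\frac{n-2s}{2}}t^{\gamma_j}\asymp t^{-\frac{n-2s}{2}\alpha_j+\frac{n-2s}{2}\alpha_{j-1}-\sigma}=t^{-1-2s\alpha_j-\sigma},
\]
which upgrades the bound on $D_j$ to $|D_j|\lesssim\mu_{0,j}^{(n-2s)/2}t^{\gamma_j}\langle y_j\rangle^{-4s}\|\vec{\mu}_1\|_\sigma$. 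Adding the $H_j^{(1)}$ and $H_j^{(2)}$ bounds gives the claimed inequality. The only delicate part is keeping track of the algebra of the $\alpha_j$'s; there is no analytic difficulty once Lemma~\ref{le4.3}'s pointwise bound for $\Psi$ on $\mathrm{supp}\,\zeta_j$ is in hand.
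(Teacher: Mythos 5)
Your proposal is correct and follows essentially the same route as the paper: the same split of $H_j$ into the $\zeta_j pU^{p-1}\mu_j^{\frac{n-2s}{2}}\Psi$ part (bounded via the pointwise estimate $|\Psi|\lesssim t^{\gamma_j}\|\Psi\|_{\alpha,\sigma}^{out,*}$ on $\mathrm{supp}\,\zeta_j$ from Lemma \ref{le4.3}) and the $D_j[\vec{\mu}_1]$ part (bounded by $\lambda_{0,j}^{\frac{n-2s}{2}}t^{-\sigma}\langle y_j\rangle^{-4s}\|\vec{\mu}_1\|_\sigma\asymp\mu_{0,j}^{\frac{n-2s}{2}}t^{\gamma_j}\langle y_j\rangle^{-4s}\|\vec{\mu}_1\|_\sigma$). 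You merely make explicit the Taylor expansion and the scale-matching identity $\tfrac{n-2s}{2}(\alpha_j-\alpha_{j-1})=2s\alpha_j+1$ that the paper leaves implicit.
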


\begin{proof}
From the definition of $H_j,$ we get
\begin{equation*}
|D_1[\vec{\mu}_1]|\lesssim|\dot{\mu}_{1,1}Z_{n+1}(y_1)|\lesssim t^{\gamma_1}\langle y_1\rangle^{2s-n}\|\vec{\mu}_1\|_{\sigma},
\end{equation*}
\begin{equation*}
|D_j[\vec{\mu}_1]|\lesssim\lambda_j^{\frac{n-2s}{2}}t^{-\sigma}(|Z_{n+1}(y_j)|+\langle y_j\rangle^{-4s})\|\vec{\mu}_1\|_{\sigma}\lesssim\mu_{0,j}^{\frac{n-2s}{2}}t^{\gamma_j}\langle y_j\rangle^{-4s}\|\vec{\mu}_1\|_{\sigma},
\end{equation*}
for $j=2,\cdots,k.$ Similar to the discussion in Lemma $\ref{le4.3},$ we obtain that
\begin{equation*}	|\zeta_jp(-1)^{j-1}U(y_j)^{p-1}\mu_j^{\frac{n-2s}{2}}\Psi|\lesssim\mu_{0,j}^{\frac{n-2s}{2}}t^{\gamma_j}\langle y_j\rangle^{-4s}\|\Psi\|_{\alpha,\sigma}^{out,*}.
\end{equation*}
Then we complete the proof.
\end{proof}

\section{The fixed point problem}
In this section, we shall solve the system $\eqref{eq3.12}$ and $\eqref{eq3.13}$ by the fixed point argument. We reformulate the inner-outer gluing system and the orthogonal equation into the mapping $\vec{T}:$
\begin{equation*}
(\vec{\phi},\Psi,\vec{\mu}_1)=\vec{T}[\vec{\phi},\Psi,\vec{\mu}_1],
\end{equation*}
where $\vec{T}=(\vec{T}_1,T_2,\vec{T}_3),$ $\vec{T}_1=(\vec{T}_1^1,\cdots,\vec{T}_1^k),\vec{T}_3=(\vec{T}_3^1,\cdots,\vec{T}_3^k),$ with the following expressions,
\begin{equation}
\label{6.1}
\begin{cases}
\vec{T}_1^j[\Psi,\vec{\mu}_1]=\mathcal{T}_j^{in}[H_j[\Psi,\vec{\mu}_1]],\quad j=1,\cdots,k,\\T_2[\vec{\phi},\Psi,\vec{\mu}_1]
=\mathcal{T}^{out,*}[\mathcal{G}[\vec{\phi},\Psi,\vec{\mu}_1]],\\
\vec{T}_3^j[\Psi,\vec{\mu}_1]=\mathcal{S}_j[\Psi,\vec{\mu}_1],\quad j=1,\cdots,k,
\end{cases}
\end{equation}
where  $\mathcal{T}_j^{in}$ is defined in $\eqref{eq4.10},$ $\mathcal{T}^{out,*}$ is defined in $\eqref{eq5.3},$ $\mathcal{S}_j$ is introduced in $\eqref{eq4.19}.$

In order to find a solution to \eqref{6.1}, we introduce the following norm for $\vec{\phi}$:
\begin{equation*}
\|\vec{\phi}\|_{a,\sigma}^{in}:=\sum_{j=1}^k\|\phi_j\|_{j,a,\sigma}^{in},
\end{equation*}
where
$$\|\phi_j\|_{j,a,\sigma}^{in}:=\sup\limits_{t>t_0}\sup_{y\in \mathbb{R}^n}\frac{ t^{\frac{n-2s}{2}\alpha_j-\gamma_j}}{R^{a-2s}}\langle y\rangle^{a}
\left((1+|y|^s)\left(\int_{\mathbb{R}^n}\frac{|\phi(y,t)-\phi(x,t)|^2}{|y-x|^{n+2s}}\,dx\right)^{\frac{1}{2}}
+(1+|y|)|\nabla_{y}\phi|\chi_{B_{8R}(0)}+|\phi|\right).$$

Denote $\mathcal{B}: \mathcal{B}_{in}\times \mathcal{B}_{out}\times\mathcal{B}_{\mu},$
where
\begin{equation*}
\begin{split}
&\mathcal{B}_{in}:=\left\{\vec{\phi}\in [C^1(B_{8R}\times(t_0,\infty))]^k|\|\vec{\phi}\|_{\alpha,\sigma}^{in}\le 1\right\},\\&
\mathcal{B}_{out}:=\left\{\Psi\in C(\mathbb{R}^n\times(t_0,\infty))|\|\Psi\|_{\alpha,\sigma}^{out,*}\le t_0^{-\frac{l}{2}}\right\},\\&
\mathcal{B}_{\mu}:=\left\{\vec{\mu}_1\in [C^1(t_0,\infty)]^k|\|\vec{\mu}_1\|_{\sigma}\le t_0^{-\frac{l}{4}}\right\}.
\end{split}
\end{equation*}

\begin{proof}[Proof of Theorem $\ref{th1.1}$]
Firstly, we claim that $\vec{T}$ maps $\mathcal{B}$ to $\mathcal{B}$ for $t_0$ large enough. Indeed, for $(\vec{\phi},\Psi,\vec{\mu}_1)\in\mathcal{B},$ applying  Proposition $\ref{pro4.1}$ and Lemma $\ref{le4.4},$ we have
\begin{equation*}
\|\vec{T}_1[\Psi,\vec{\mu}_1]\|_{\alpha,\sigma}^{in}\le\sum_{j=1}^k C\left(t_0^{-\frac{l}{2}}+t_0^{-\frac{l}{4}}\right)\le 1
\end{equation*}
with $t_0$ large enough. In addition, applying Proposition $\ref{pro5.2},$ we deduce that
\begin{equation*}
\|T_2[\vec{\phi},\Psi,\vec{\mu}_1]\|_{\alpha,\sigma}^{out,*}\le Ct_0^{-l}\left(1+t_0^{-\frac{l}{2}}+t_0^{-p\frac{l}{2}}\right)\le t_0^{-\frac{l}{2}}.
\end{equation*}
Applying Lemma $\ref{le4.3},$ we get
\begin{equation*}
\|\vec{T}_3[\Psi,\vec{\mu}_1]\|_\sigma\le C\left(t_0^{-\frac{l}{2}}+O(R^{-2s})\right)\le t_0^{-\frac{l}{4}}.
\end{equation*}
Thus the claim is true, that is $\vec{T}:\mathcal{B}\to\mathcal{B}.$
	
Next, the existence of a fixed point in $B$ will then follow from Schauder's theorem if we establish the compactness of the operator $\vec{T}.$ Therefore, we consider any sequence $(\vec{\phi}^n,\Psi^n,\vec{\mu}_{1}^n)\in \mathcal{B},$ where $\vec{\phi}^n=(\phi_1^n,\cdots,\phi_k^n),$ $\vec{\mu}_1^n=(\mu_1^n,\cdots,\mu_k^n).$ We have to prove that the sequence $\vec{T}[\vec{\phi}^n,\Psi^n,\vec{\mu}_{1}^n]$ has a convergent subsequence. Let us consider first the sequence $\tilde{\phi}_j^n=\mathcal{T}_j^{in}[H_j[\Psi^n,\vec{\mu}_1^n]].$ We write $\bar{\phi}_j^n(y_j,\tau_n(t))=\tilde{\phi}_j^n(y_j,t),$ where $\tau_n(t)=\int_{t_0}^{t}\frac{ds}{\mu_{j,n}^{2s}}.$ Then  we see that $\bar{\phi}_j^n(y,\tau_n(t))$ satisfies
\begin{equation*}
\partial_{\tau}\bar{\phi}_j^n=-(-\Delta)_{y_j}^s\bar{\phi}_j^n+h_j^n(y_j,\tau),
\end{equation*}
where $h_j^n(y,\tau)=H_j[\Psi^n,\vec{\mu}_1^n].$ Applying the regularity estimates for fractional parabolic equations (see \cite{Silvestre_2012}),  we know that $\bar{\phi}_j^n$ are equi-continuous in compact sets of $B_{8R}\times(t_0,\infty)$ by using Lemma $\ref{le4.4}.$ Using Arezla-Ascoli theorem, we obtain that $\bar{\phi}_j^n$ will convergence uniformly in compact sets of $B_{8R}\times(t_0,\infty).$ Since $\bar{\phi}_j^n\in\mathcal{B}_{in},$ then the limit will also belong to $\mathcal{B}_{in}.$ Similarly, consider $\bar{\Psi}^n=\mathcal{T}^{out,*}[\mathcal{G}[\vec{\phi}^n,\Psi^n,\vec{\mu}_{1}^n]].$ Since $\mathcal{G}$ is uniformly bounded, $\bar{\Psi}^n$ are equi-continuous in compact sets of $\mathbb{R}^n\times(t_0,\infty).$ By Arezla-Ascoli theorem, $\bar{\Psi}^n$ converges uniformly to a function $\bar{\Psi}\in \mathcal{B}_{out}.$ In addition, consider $\mathcal{S}[\Psi^n,\vec{\mu}_1^n].$ From Lemma $\ref{le4.1},$ we see that $M_1,M_j$ are $C^1(t_0,\infty),$ which yields that $\mathcal{S}\in C^2(t_0,\infty).$ Thus $\mathcal{S}[\Psi^n,\vec{\mu}_1^n]$ has a convergent subsequence in $\mathcal{B}_{\mu}.$ By Schauder's fixed point theorem, $\vec{T}:\mathcal{B}\to\mathcal{B}$ has a fixed point $(\vec{\phi},\Psi,\vec{\mu}_1).$ Equivalently, we have constructed a bubble tower solution for $\eqref{eq1.1}$ and $u=\bar{U}+\varphi_0+\sum_{j=1}^k\varphi_j\eta_j+\Psi.$ Moreover, we have $u=\bar{U}(1+o(1)).$
\end{proof}

\appendix
\section{Appendix: Some estimates for Proposition $\ref{pro5.2}$}
In this section, we introduce the notation  $y_j=\frac{x}{\mu_j},\bar{y}_j=\frac{x}{\bar{\mu}_j},y_{0,j}=\frac{x}{\mu_{0,j}},\bar{y}_{0,j}
=\frac{x}{\bar{\mu}_{0,j}}$ for $j=1,\cdots,k.$ From the definitions of $\mu_j,\bar{\mu}_j,\mu_{0,j},\bar{\mu}_{0,j},$ we observe that $|y_j|\approx|y_{0,j}|,|\bar{y}_j|\approx|\bar{y}_{0,j}|$ for $j=1,\cdots,k.$

\begin{lemma}
\label{lea.1}
For $U_j$ defined in $\eqref{eq2.3}$ and $j=1,\cdots,k-1,$ one has
\begin{equation}\label{eqa.1}
|U_j|<|U_{j+1}|\quad in\quad\{|x|<\bar{\mu}_{j+1}\}
\end{equation}
and
\begin{equation}\label{eqa.2}
|U_j|>|U_{j+1}|\quad in\quad \{|x|>\bar{\mu}_{j+1}\}.
\end{equation}
In $\{|x|\le\bar{\mu}_{0,k}\},$
\begin{equation}\label{eqa.3}
|U_k|\gtrsim|U_{k-1}|>|U_{k-2}|>\cdots>|U_1|.
\end{equation}
In $\{|x|\ge\bar{\mu}_{0,2}\},$
\begin{equation}\label{eqa.4}
|U_1|\gtrsim |U_2|>|U_3|>\cdots>|U_k|.
\end{equation}
In $\{\bar{\mu}_{0,j+1}\le|x|\le\bar{\mu}_{0,j}\},\quad j=2,\cdots,k-1,$
\begin{equation}\label{eqa.5}
|U_j|\gtrsim|U_{j+1}|>|U_{j+2}|>\cdots|U_k|,\quad |U_j|\gtrsim|U_{j-1}|>|U_{j-2}|>\cdots|U_1|.
\end{equation}
Moreover,
\begin{equation}\label{eqa.6}
\frac{|U_{j+1}|}{|U_j|}\approx\lambda_{j+1}^{-\frac{n-2s}{2}}\langle y_{j+1} \rangle^{-(n-2s)}1_{\{|x|\le\mu_{0,j}\}}+\lambda_{j+1}^{\frac{n-2s}{2}}1_{\{|x|>\mu_{0,j}\}}\quad \text{for}\quad j=1,\cdots,k-1,
\end{equation}
\begin{equation}\label{eqa.7}
\frac{|U_{j-1}|}{|U_j|}\approx\lambda_j^{\frac{n-2s}{2}}\langle y_j\rangle^{n-2s}1_{\{|x|\le\mu_{0,j-1}\}}+\lambda_j^{-\frac{n-2s}{2}}1_{\{|x|>\mu_{0,j-1}\}}\quad \text{for}\quad j=2,\cdots,k.
\end{equation}
\end{lemma}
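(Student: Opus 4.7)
My plan is to reduce every assertion to an explicit analysis of the ratio $|U_{j+1}|/|U_j|$, because from the definition \eqref{eq2.3} and the formula for $U$ we have the elementary representation
\[
|U_j(x)|=\alpha_{n,s}\,\frac{\mu_j^{(n-2s)/2}}{(\mu_j^2+|x|^2)^{(n-2s)/2}}.
\]
Throughout I will use the fact, guaranteed by \eqref{eq2.15}--\eqref{eq2.16}, that $\mu_j\thickapprox\mu_{0,j}$ and hence $\bar\mu_j\thickapprox\bar\mu_{0,j}$, so that all domains of the form $\{|x|\le\bar\mu_{0,j}\}$ and $\{|x|\le\bar\mu_j\}$ can be used interchangeably up to harmless constants.

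First I would prove \eqref{eqa.1}--\eqref{eqa.2}. Writing
\[
\frac{|U_{j+1}(x)|}{|U_j(x)|}=\lambda_{j+1}^{(n-2s)/2}\left(\frac{\mu_j^2+|x|^2}{\mu_{j+1}^2+|x|^2}\right)^{(n-2s)/2},\qquad \lambda_{j+1}=\mu_{j+1}/\mu_j<1,
\]
and setting this ratio equal to one, a direct manipulation gives the unique crossover $|x|^2=\mu_j\mu_{j+1}=\bar\mu_{j+1}^{\,2}$. Since the bracketed factor is strictly decreasing in $|x|^2$ (from the value $\lambda_{j+1}^{-1}>1$ at $|x|=0$ down to $1$ at infinity), the ratio crosses $1$ exactly once, which yields \eqref{eqa.1} and \eqref{eqa.2} simultaneously. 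The chain inequalities \eqref{eqa.3}, \eqref{eqa.4} and \eqref{eqa.5} then follow by iteration. For instance, in $\{|x|\le\bar\mu_{0,k}\}$ one has $|x|\lesssim\bar\mu_k<\bar\mu_{k-1}<\cdots<\bar\mu_2$ (by \eqref{eq2.2}), so applying \eqref{eqa.1} successively produces $|U_k|\gtrsim|U_{k-1}|>|U_{k-2}|>\cdots>|U_1|$; the other two chains are handled identically using the nesting of the $\bar\mu_{0,\cdot}$.

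For the quantitative ratios \eqref{eqa.6} and \eqref{eqa.7} I would split the analysis according to whether $|x|\le\mu_{0,j}$ or $|x|>\mu_{0,j}$ (respectively $\mu_{0,j-1}$), so that one of the two denominators in the ratio collapses. For \eqref{eqa.6} in the regime $|x|\le\mu_{0,j}$ one has $\mu_j^2+|x|^2\thickapprox\mu_j^2$ and $\mu_{j+1}^2+|x|^2\thickapprox\mu_{j+1}^2\langle y_{j+1}\rangle^2$, so
\[
\frac{|U_{j+1}|}{|U_j|}\thickapprox\lambda_{j+1}^{(n-2s)/2}\cdot\frac{\mu_j^{n-2s}}{\mu_{j+1}^{n-2s}\langle y_{j+1}\rangle^{n-2s}}=\lambda_{j+1}^{-(n-2s)/2}\langle y_{j+1}\rangle^{-(n-2s)};
\]
in the regime $|x|>\mu_{0,j}$ both denominators behave like $|x|^{n-2s}$, leaving the constant factor $\lambda_{j+1}^{(n-2s)/2}$. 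A completely symmetric split (on $|x|$ versus $\mu_{0,j-1}$) proves \eqref{eqa.7}.

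I do not expect any serious obstacle here: every step is an elementary manipulation of the explicit bubble formula. The only point that requires mild care is bookkeeping between $\mu_j$ and its leading order $\mu_{0,j}$ so that the threshold $\bar\mu_{j+1}$ coming from the crossover computation matches (up to constants) the threshold $\bar\mu_{0,j+1}$ appearing in the statement of the lemma; this is immediate from \eqref{eq2.15}--\eqref{eq2.16}.
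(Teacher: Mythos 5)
Your proposal is correct and follows essentially the same route as the paper: write out the ratio $|U_{j+1}|/|U_j|$ explicitly, use its monotonicity in $|x|$ and the crossover at $|x|=\bar\mu_{j+1}$ to get \eqref{eqa.1}--\eqref{eqa.5}, and split on $|x|\lessgtr\mu_{0,j}$ (resp.\ $\mu_{0,j-1}$) for the quantitative equivalences \eqref{eqa.6}--\eqref{eqa.7}. The only (immaterial) slip is the value you quote at $|x|=0$, which is $\lambda_{j+1}^{-(n-2s)/2}$ for the full ratio rather than $\lambda_{j+1}^{-1}$; all that matters is that it exceeds $1$ and decreases to $\lambda_{j+1}^{(n-2s)/2}<1$.
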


\begin{proof}
Recall that $U_j=(-1)^{j-1}\mu_j^{\frac{2s-n}{2}}(1+|y_j|^2)^{\frac{2s-n}{2}}.$ Consider the monotonicity of $\left|\frac{U_{j+1}}{U_j}\right|$ in the different regions. Then we can obtain $\eqref{eqa.1}-\eqref{eqa.4}.$ In addition, by  direct calculation, we find that
\begin{equation*}	\left|\frac{U_{j+1}}{U_j}\right|=\lambda_{j+1}^{\frac{n-2s}{2}}
\frac{(1+|y_j|^2)^{\frac{n-2s}{2}}}{(\lambda_{j+1}^2+|y_j|^2)^{\frac{n-2s}{2}}}
\approx\lambda_{j+1}^{-\frac{n-2s}{2}}\langle y_{j+1}\rangle^{-(n-2s)}1_{\{|x|\le\mu_{0,j}\}}+\lambda_{j+1}^{\frac{n-2s}{2}}1_{\{|x|>\mu_{0,j}\}}
\end{equation*}
for $j=1,\cdots,k-1.$ That is, $\eqref{eqa.5}$ holds. Similarly, $\eqref{eqa.6}$ holds.
\end{proof}

\begin{lemma}
\label{lea.2}
For $\varphi_0$ defined in $\eqref{eq2.6},$ one has $|\varphi_0|\lesssim\sum_{j=2}^k\lambda_j^s|U_j|\chi_j.$
\end{lemma}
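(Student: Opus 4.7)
\medskip

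\noindent\textbf{Proof proposal.}

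The strategy is to bound each summand $|\varphi_{0,j}|\chi_j$ individually and then sum. Using the expressions
\begin{equation*}
\varphi_{0,j}(x,t)=\frac{(-1)^{j-1}}{\mu_j^{(n-2s)/2}}\phi_{0,j}(y_j,t),\qquad \phi_{0,j}=\lambda_{0,j}^{(n-2s)/2}\bar{\phi}(y_j),\qquad |U_j|=\mu_j^{-(n-2s)/2}U(y_j),
\end{equation*}
with $y_j=x/\mu_j$, the pointwise inequality $|\varphi_{0,j}|\chi_j\lesssim \lambda_j^s|U_j|\chi_j$ is equivalent to
\begin{equation*}
|\bar{\phi}(y_j)|\lesssim \lambda_j^s\,\lambda_{0,j}^{-(n-2s)/2}\,U(y_j)\quad\text{on}\;\,\mathrm{supp}(\chi_j).
\end{equation*}
Since $\mu_j=\mu_{0,j}(1+O(t^{-\sigma}))$, we have $\lambda_j\approx\lambda_{0,j}$, so it suffices to work with $\lambda_{0,j}$.

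The essential ingredient is a sharp decay estimate for the fixed function $\bar{\phi}$. Recall that $\bar{\phi}$ solves $L_0\bar{\phi}=-\bar{h}$ with $\bar{h}=-pU(0)U^{p-1}-cZ_{n+1}$, and that $|\bar{h}(y)|\lesssim (1+|y|)^{-(n-2s)}$ at infinity by direct inspection of $U^{p-1}$ and $Z_{n+1}$. Using the solvability condition $\int\bar{h}\,Z_{n+1}=0$ (which is how $c$ was fixed) together with the standard inversion theory for the linearized fractional operator $L_0$ from \cite{Davila2013}, I would argue that the unique decaying solution satisfies
\begin{equation*}
|\bar{\phi}(y)|\lesssim (1+|y|)^{-(n-4s)}\quad\text{in}\;\,\mathbb{R}^n.
\end{equation*}
Recalling $U(y)\approx (1+|y|)^{-(n-2s)}$, this gives the key quotient bound $|\bar{\phi}(y)|/U(y)\lesssim(1+|y|)^{2s}$. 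I would then localize to $\mathrm{supp}(\chi_j)\subset\{|x|\le\bar{\mu}_{0,j}\}$, where
\begin{equation*}
|y_j|\le \frac{\bar{\mu}_{0,j}}{\mu_j}\approx \sqrt{\mu_{0,j-1}/\mu_{0,j}}=\lambda_{0,j}^{-1/2},
\end{equation*}
so $(1+|y_j|)^{2s}\lesssim \lambda_{0,j}^{-s}$ there.

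Combining these two ingredients on $\mathrm{supp}(\chi_j)$,
\begin{equation*}
\lambda_{0,j}^{(n-2s)/2}\,|\bar{\phi}(y_j)|\;\lesssim\;\lambda_{0,j}^{(n-2s)/2-s}\,U(y_j)=\lambda_{0,j}^{(n-4s)/2}\,U(y_j)\;\lesssim\;\lambda_j^{s}\,U(y_j),
\end{equation*}
where the last step uses $\lambda_{0,j}\lesssim 1$ together with the hypothesis $n>6s$, which gives $(n-4s)/2\ge s$. Dividing by $\mu_j^{(n-2s)/2}$ this reads $|\varphi_{0,j}|\chi_j\lesssim \lambda_j^s|U_j|\chi_j$, and a triangle inequality over $j=2,\dots,k$ produces the claimed bound.

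The main obstacle is Step 2, the sharp pointwise decay $|\bar{\phi}|\lesssim (1+|y|)^{-(n-4s)}$. Because $\bar{h}\notin L^1(\mathbb{R}^n)$ (its decay rate is critical with respect to integrability), the naive Riesz-potential argument does not apply directly; instead one has to invoke (or re-derive) the asymptotic expansion of $L_0^{-1}$ acting on data with algebraic tails, exploiting both the good behavior of $pU^{p-1}$ and the orthogonality against $Z_{n+1}$. Once this decay is in hand, the rest of the proof reduces to the routine algebraic manipulation with $\lambda_{0,j}$ and the size of $\mathrm{supp}(\chi_j)$ described above.
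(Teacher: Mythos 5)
Your overall strategy is the same as the paper's: reduce to a pointwise decay bound for the fixed profile $\bar{\phi}$, then use that $\mathrm{supp}(\chi_j)$ forces $\langle y_j\rangle\lesssim\lambda_{0,j}^{-1/2}$. However, there is a concrete error in your key quantitative input. You claim $|\bar{h}(y)|\lesssim(1+|y|)^{-(n-2s)}$, but $U^{p-1}=\alpha_{n,s}^{p-1}(1+|y|^2)^{-2s}\approx|y|^{-4s}$, and since $n>6s$ gives $4s<n-2s$, the term $pU(0)U^{p-1}$ \emph{dominates} $Z_{n+1}\sim|y|^{-(n-2s)}$ at infinity; hence $\bar{h}$ decays only like $|y|^{-4s}$. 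The orthogonality $\int\bar{h}Z_{n+1}=0$ guarantees solvability but does not cancel this leading tail, so the inversion of $L_0$ (a gain of $2s$ powers, as in the Riesz-potential heuristic) yields only $|\bar{\phi}(y)|\lesssim\langle y\rangle^{-2s}$ — which is exactly the bound the paper uses (stated as "$|\varphi_{0,j}|\le\lambda_{0,j}^{\frac{n-2s}{2}}\mu_j^{-\frac{n-2s}{2}}\langle y_j\rangle^{-2s}$ by Green's representation formula" in Lemma \ref{lea.4}). Your claimed rate $(1+|y|)^{-(n-4s)}$ is strictly stronger than $2s$ precisely when $n>6s$ and is not attainable; moreover you flag this very step as the "main obstacle" and do not prove it, so as written the argument has a genuine gap resting on a false estimate.

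The good news is that the error is harmless to the conclusion: repeating your final computation with the correct decay gives, on $\mathrm{supp}(\chi_j)$,
\begin{equation*}
\lambda_{0,j}^{\frac{n-2s}{2}}|\bar{\phi}(y_j)|\lesssim\lambda_{0,j}^{\frac{n-2s}{2}}\langle y_j\rangle^{-2s}
=\lambda_{0,j}^{\frac{n-2s}{2}}\langle y_j\rangle^{n-4s}\,U(y_j)\cdot\big(1+o(1)\big)
\lesssim\lambda_{0,j}^{\frac{n-2s}{2}-\frac{n-4s}{2}}U(y_j)=\lambda_{0,j}^{s}U(y_j),
\end{equation*}
which closes the argument with equality of exponents (no need for the slack step $\lambda_{0,j}\lesssim1$ and $(n-4s)/2\ge s$). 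So you should replace your Step 2 by the correct decay $|\bar{\phi}|\lesssim\langle y\rangle^{-2s}$, justified by the mapping properties of $L_0^{-1}$ on right-hand sides that are $O(|y|^{-m})$ with $m=4s\in(2s,n)$; the rest of your computation then reproduces the paper's proof.
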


\begin{proof}
Using $\eqref{eq2.6}$ and $\eqref{eq2.14},$ we deduce that
$$|\varphi_0|\lesssim\sum_{j=2}^k\mu_{j-1}^{-\frac{n-2s}{2}}\langle y_j\rangle^{-2s}\chi_j.$$
It follows from the definition of $\chi_j$ that the support of $\chi_j$ is contained in $\left\{\frac{1}{2}\lambda_{j+1}^{\frac{1}{2}}\le|y_j|\le\lambda_j^{-\frac{1}{2}}\right\}.$ Then by straightforward calculation, we have $\mu_{j-1}^{-\frac{n-2s}{2}}\langle y_j\rangle^{-2s}\chi_j\lesssim\lambda_j^s|U_j|$ in this set.
\end{proof}

\begin{lemma}
\label{lea.3}
For $\alpha\in(0,s),$ $a=2s+\alpha,$ there exist $R,t_0$ large enough such that $B[\vec{\phi}]$ defined in $\eqref{eq3.8}$ satisfies
\begin{equation*}
\|B[\vec{\phi}]\|_{\alpha,\sigma}^{out}\lesssim t_0^{-l}\|\vec{\phi}\|_{a,\sigma}^{in},
\end{equation*}
where $l$ is a small enough positive constant.
\end{lemma}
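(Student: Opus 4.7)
The plan is to split $B[\vec\phi]$ into the four pieces that appear in $\eqref{eq3.8}$,
\begin{align*}
B_1 &:=\sum_{j=1}^k\left[-(-\Delta)_x^{s/2}\eta_j,\ -(-\Delta)_x^{s/2}\varphi_j\right],\\
B_2 &:=\sum_{j=1}^k(-\partial_t\eta_j-(-\Delta)_x^s\eta_j)\varphi_j,\\
B_3 &:=\sum_{j=1}^k\eta_j(f'(u_*)-f'(U_j))\varphi_j,\\
B_4 &:=-\sum_{j=1}^k\dot\mu_j\,\partial_{\mu_j}\varphi_j\,\eta_j,
\end{align*}
and to estimate each one pointwise by a single outer weight from the list $\omega_{1,j},\omega_{1,j}',\omega_{1,j}''$ on the relevant subregion. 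Throughout I plan to feed in the inner bound of Proposition \ref{pro4.1},
$$|\phi_j(y,t)|+(1+|y|)|\nabla_y\phi_j|\chi_{B_{8R}}+(1+|y|^s)\left(\int_{\mathbb{R}^n}\frac{|\phi_j(y,t)-\phi_j(z,t)|^2}{|y-z|^{n+2s}}\,dz\right)^{1/2}\lesssim\frac{R^{a-2s}}{t^{\frac{n-2s}{2}\alpha_j-\gamma_j}}\langle y\rangle^{-a}\|\vec\phi\|_{a,\sigma}^{in},$$
and rescale with $y_j=x/\mu_j$; on $\mathrm{supp}\,\eta_j$ the prefactor $\mu_j^{-(n-2s)/2}$ combines with $t^{\frac{n-2s}{2}\alpha_j}$ to give the clean form $|\varphi_j|\lesssim R^{a-2s}\,t^{\gamma_j}\langle y_j\rangle^{-a}\|\vec\phi\|_{a,\sigma}^{in}$, and analogously for $\nabla_{y_j}\phi_j$ and its Gagliardo seminorm.

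For $B_4$ I would write $\partial_{\mu_j}\varphi_j$ explicitly as a multiple of $\mu_j^{-(n-2s)/2-1}\bigl(\tfrac{n-2s}{2}\phi_j+y_j\cdot\nabla_{y_j}\phi_j\bigr)$; combined with $|\dot\mu_j|\lesssim t^{-1}\mu_{0,j}$ and the cutoff $\eta_j$, this sits inside $\omega_{1,j}$ with an extra $t^{-1+\sigma}R^{a-2s}$ to spare. For $B_3$, Lemmas \ref{lea.1}--\ref{lea.2} quantify the size of $f'(u_*)-f'(U_j)$ on $\{|x|\le 4R\mu_{0,j}\}$: writing $u_*=\sum_i U_i+\varphi_0$, using the ordering \eqref{eqa.3}--\eqref{eqa.5} on each annulus and the explicit bounds \eqref{eqa.6}--\eqref{eqa.7}, one shows $|f'(u_*)-f'(U_j)|\lesssim U_j^{p-1}\bigl(\lambda_{0,j}^s+\lambda_{0,j+1}^{(n-2s)/2}\bigr)$, after which multiplying by $|\varphi_j|$ again fits into $\omega_{1,j}$ with a positive power of $t^{-1}$ to spare.

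For $B_2$ the two ingredients are the standard cutoff estimates $|\partial_t\eta_j(x)|\lesssim t^{-1}\,1_{\{R\mu_{0,j}/2\le|x|\le 4R\mu_{0,j}\}}$ and $|(-\Delta)^s\eta_j(x)|\lesssim (R\mu_{0,j})^{-2s}1_{\{|x|\le 4R\mu_{0,j}\}}+(R\mu_{0,j})^n|x|^{-n-2s}1_{\{|x|\ge 4R\mu_{0,j}\}}$, obtained by scaling from $\chi$. Multiplying by $\varphi_j$, the compactly supported part matches $\omega_{1,j}'$ and the algebraic tail matches $\omega_{1,j}''$. For $B_1$ the plan is to use the pointwise representation
$$B_1(x,t)=C_{n,s}\sum_{j=1}^k\int_{\mathbb{R}^n}\frac{(\eta_j(x)-\eta_j(y))(\varphi_j(x)-\varphi_j(y))}{|x-y|^{n+2s}}\,dy,$$
apply Cauchy--Schwarz against the fractional seminorm of $\phi_j$ provided by Proposition \ref{pro4.1} and that of $\eta_j$ (which is of order $(R\mu_{0,j})^{-s}$), and split the $y$-integral into the dyadic regions $\{|y|\le|x|/2\},\ \{|y|\ge 2|x|\},\ \{|y|\sim|x|\}$; this produces the same two-region bound as $B_2$ and again lands in $\omega_{1,j}'$ and $\omega_{1,j}''$.

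The main obstacle is precisely $B_1$ and $B_2$: unlike the classical heat case, $(-\Delta)^s\eta_j$ and the commutator $[-(-\Delta)^{s/2}\eta_j,\,\cdot\,]$ are not compactly supported even though $\eta_j$ is, and they leak out algebraically decaying tails that have to be controlled at every scale — this is exactly the reason the auxiliary weights $\omega_{1,j}'$ and $\omega_{1,j}''$ have been introduced. Once the four pointwise inequalities are assembled and summed over $j$, the overall prefactor in front of every weight is of order $R^{a-2s}\cdot t_0^{-c}=t_0^{\epsilon\alpha-c}$ with $c>0$ depending on $\sigma$ and on the exponents $\alpha_j$; choosing $\epsilon$ small enough relative to $\sigma$ yields some $l>0$ and gives $\|B[\vec\phi]\|_{\alpha,\sigma}^{out}\lesssim t_0^{-l}\|\vec\phi\|_{a,\sigma}^{in}$.
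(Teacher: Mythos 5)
Your decomposition and toolkit coincide with the paper's: the dilation term, the two cutoff terms, the commutator handled by Cauchy--Schwarz against the Gagliardo seminorms of $\eta_j$ and $\phi_j$, and the potential term $f'(u_*)-f'(U_j)$ handled via Lemmas \ref{lea.1}--\ref{lea.2} and a region decomposition. However, two of your intermediate claims are not correct as stated. First, for $B_3$ the uniform bound $|f'(u_*)-f'(U_j)|\lesssim U_j^{p-1}\bigl(\lambda_{0,j}^s+\lambda_{0,j+1}^{(n-2s)/2}\bigr)$ fails on the inner part of $\mathrm{supp}\,\eta_j$: on $\{\bar\mu_{0,i+1}\le|x|\le\bar\mu_{0,i}\}$ with $i>j$ one has $f'(u_*)\approx p|U_i|^{p-1}\approx\mu_i^{-2s}\langle y_i\rangle^{-4s}$, which exceeds $U_j^{p-1}\lambda_{0,j+1}^{(n-2s)/2}$ by a factor of order $\lambda_{0,j+1}^{-(n+2s)/2}\to\infty$ near the core of $U_{j+1}$; and even on the outer annulus $\{\bar\mu_{0,j+1}\le|x|\le4R\mu_{0,j}\}$ the ratio $|U_{j+1}|/|U_j|\approx\lambda_{j+1}^{-(n-2s)/2}\langle y_{j+1}\rangle^{2s-n}$ is of order one near $|x|\sim\bar\mu_{0,j+1}$, not $O(\lambda_{0,j+1}^{(n-2s)/2})$. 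Consequently $B_3$ does \emph{not} fit into $\omega_{1,j}$ alone: the $|U_{j+1}|/|U_j|$ contribution produces a profile $\sim|x|^{2s-n}$ centered at scale $\mu_{0,j+1}$ that must be absorbed by $\omega_{2,j}$, and the inner annuli contribute $\omega_{1,i}$ for $i>j$. This is precisely one of the places where the weights $\omega_{2,j}$ earn their keep, so the omission is a real gap rather than a notational one.

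Second, your weight assignment for $B_1$ and $B_2$ is inconsistent with the definitions: $\omega_{1,j}'$ and $\omega_{1,j}''$ are supported only in $\{|x|\ge\bar\mu_{0,j}\}$, which is essentially disjoint from $\mathrm{supp}\,\eta_j\subset\{|x|\le4R\mu_{0,j}\}$ because $R\lambda_{0,j}^{1/2}\to0$; hence the ``compactly supported part'' of $(-\Delta)^s\eta_j\cdot\varphi_j$ cannot be matched to $\omega_{1,j}'$ and must go into $\omega_{1,j}$ (it does, with a factor $R^{\alpha-2s}$ to spare). In the far region one must further distinguish the tail carrying $\varphi_j(x)$, which is controlled by $\omega_{1,j}'$ (gaining the small factor $\mu_{0,j}^n$ from the decay of $\varphi_j$ at $x$), from the piece of the commutator carrying $\varphi_j(y)$ integrated over $\mathrm{supp}\,\eta_j$, which only gains $\bar\mu_{0,j}^n$ and is exactly what forces the introduction of $\omega_{1,j}''$; for $j=1$ the far-field commutator must instead be absorbed into $\omega_3$, since no $\omega_{1,1}''$ appears in the outer norm. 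These are fixable bookkeeping errors, but as written the pointwise inequalities you assert do not close against the stated weights.
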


\begin{proof}
According to the definition of $\|\phi_j\|_{j,a,\sigma}^{in},$  we have
\begin{equation*}
\begin{split}			&(1+|y_j|^s)\left(\int_{\mathbb{R}^n}\frac{|\phi_j(y_j,\tau)-\phi_j(x,\tau)|^2}{|y_j-x|^{n+2s}}\,dx\right)
^{\frac{1}{2}}+(1+|y_j|)|\nabla_{y_j}\phi_j|\chi_{B_{8R}(0)}+|\phi_j(y_j,\tau)|\\
&\lesssim\mu_{0,j}^{\frac{n-2s}{2}}t^{\gamma_j}R^{a-2s}\langle y_j\rangle^{-a}\|\phi_j\|_{j,a,\sigma}^{in}.	
\end{split}
\end{equation*}
\smallskip

\noindent (i)~\ From the definition of $\varphi_j,$ we deduce that
\begin{equation*}
\begin{split}
\left|\dot{\mu}_j\frac{\partial\varphi_j}{\partial\mu_j}\eta_j\right|
&=\left|\dot{\mu}_j\mu_j^{-\frac{n-2s+2}{2}}
\left(\frac{n-2s}{2}\phi_j(y_j,t)+y_j\cdot\nabla_{y_j}\phi_j(y_j,t)\right)\eta_j\right|\\
&\lesssim|\dot{\mu}_j|\mu_j^{-\frac{n-2s+2}{2}}\eta_j\cdot\mu_{0,j}^{\frac{n-2s}{2}}t^{\gamma_j}R^{a-2s}
\langle y_j\rangle^{-a}\|\phi_j\|_{j,a,\sigma}^{in} \lesssim t_0^{-l}\omega_{1,j}\|\phi_j\|_{j,a,\sigma}^{in},
\end{split}
\end{equation*}
where $l>0$ is chosen such that $|\dot{\mu}_j\mu_j^{\frac{4s-2}{2}}R^{a-2s}|\lesssim t_0^{-l}$ for $j=1,2,\cdots,k.$
\smallskip
	
\noindent (ii)~\ Concerning the term $|-(-\Delta)^s\eta_j\varphi_j|$, by direct computation we get
\begin{equation*}
\begin{split}
|-(-\Delta)^s\eta_j\varphi_j|&\lesssim
\left|\int_{\mathbb{R}^n}\frac{\eta_j(x)-\eta_j(y)}{|x-y|^{n+2s}}\,dy\right|
\mu_j^{-\frac{n-2s}{2}}|\phi_j|\\&\lesssim\frac{1}{R^{2s}\mu_{0,j}^{2s}}
\left|\int_{\mathbb{R}^n}\frac{\chi\left(\frac{x}{2R\mu_{0,j}}\right)
-\chi\left(\frac{y}{2R\mu_{0,j}}\right)}{\left|\frac{x}{2R\mu_{0,j}}
-\frac{y}{2R\mu_{0,j}}\right|^{n+2s}}\,d\frac{y}{2R\mu_{0,j}}\right|
\mu_j^{-\frac{n-2s}{2}}\mu_{0,j}^{\frac{n-2s}{2}}t^{\gamma_j}R^{a-2s}\langle y_j\rangle^{-a}\|\phi_j\|_{j,a,\sigma}^{in}\\
&\lesssim\begin{cases}
t_0^{-l}\omega_{1,j}\|\phi_j\|_{j,a,\sigma}^{in},\quad &\mbox{if}\quad |x|\le 2\bar{\mu}_{0,j},\\
\\
t_0^{-l}\omega_{1,j}'\|\phi_j\|_{j,a,\sigma}^{in}, &\mbox{if}\quad |x|\ge 2\bar{\mu}_{0,j}.
\end{cases}
\end{split}
\end{equation*}
\smallskip

\noindent (iii)~\ For the Lie bracket of $-(-\Delta)^{\frac{s}{2}}\eta_j$ and $-(-\Delta)^{\frac{s}{2}}\varphi_j$ we get that
\begin{equation*}
\begin{split}
\left|[-(-\Delta)^{\frac{s}{2}}\eta_j,-(-\Delta)^{\frac{s}{2}}\varphi_j]\right|
&\lesssim\left[\int_{\mathbb{R}^n}
\left(\frac{\eta_j(x)-\eta_j(y)}{|x-y|^{\frac{n}{2}+s}}\right)^2\,dy\right]^{\frac{1}{2}}
\cdot\left[\int_{\mathbb{R}^n}
\left(\frac{\varphi_j(x)-\varphi_j(y)}{|x-y|^{\frac{n}{2}+s}}\right)^2\,dy\right]^{\frac{1}{2}}\\
&\lesssim\frac{1}{R^s\mu_{0,j}^s}\left[\int_{\mathbb{R}^n}\left(\frac{\chi\left(\frac{x}{2R\mu_{0,j}}\right)
-\chi\left(\frac{y}{2R\mu_{0,j}}\right)}{\left|\frac{x-y}{2R\mu_{0,j}}\right|^{\frac{n}{2}+s}}\right)^2\,
d\frac{y}{2R\mu_{0,j}}\right]^{\frac{1}{2}}\\&\quad\cdot\frac{\mu_{0,j}^{-\frac{n-2s}{2}}}{\mu_{0,j}^s}
\left[\int_{\mathbb{R}^n}\left(\frac{\phi_j\left(\frac{x}{\mu_{0,j}},t\right)
-\phi_j\left(\frac{y}{\mu_{0,j}},t\right)}{\left|\frac{x-y}{\mu_{0,j}}\right|^{\frac{n}{2}+s}}\right)^2\,
d\frac{y}{\mu_{0,j}}\right]^{\frac{1}{2}}.
\end{split}
\end{equation*}
If  $|x|\le2\bar{\mu}_{0,j},$ we deduce that
\begin{equation*}
\left|[-(-\Delta)^{\frac{s}{2}}\eta_j,-(-\Delta)^{\frac{s}{2}}\varphi_j]\right|\lesssim t_0^{-l}\omega_{1,j}\|\phi_j\|_{j,a,\sigma}^{in}.
\end{equation*}
If $|x|\ge2\bar{\mu}_{0,j},j=2,\cdots,k,$ we observe that
\begin{equation*}
\left|\int_{\mathbb{R}^n}\frac{[\eta_j(y)-\eta_j(x)]\cdot\varphi_j(x)}{|x-y|^{n+2s}}\,dy\right|\lesssim t_0^{-l}\omega_{1,j}'\|\phi_j\|_{j,a,\sigma}^{in}
\end{equation*}
and
\begin{equation*}
\left|\int_{\mathbb{R}^n}\frac{[\eta_j(y)-\eta_j(x)]\cdot\varphi_j(y)}{|x-y|^{n+2s}}\,dy\right|\lesssim t_0^{-l}\omega_{1,j}''\|\phi_j\|_{j,a,\sigma}^{in}.
\end{equation*}
So
\begin{equation*}
\left|[-(-\Delta)^{\frac{s}{2}}\eta_j,-(-\Delta)^{\frac{s}{2}}\varphi_j]\right|\lesssim t_0^{-l}\left(\omega_{1,j}'+\omega_{1,j}''\right)\|\phi_j\|_{j,a,\sigma}^{in}.
\end{equation*}
Similarly, if $|x|\ge 2\bar{\mu}_{0,1},$ we have
\begin{equation*}
\left|[-(-\Delta)^{\frac{s}{2}}\eta_1,-(-\Delta)^{\frac{s}{2}}\varphi_1]\right|\lesssim t_0^{-l}\omega_3\|\phi_1\|_{1,a,\sigma}^{in}.
\end{equation*}

\noindent (iv)~\ For $\partial_t\eta_j\varphi_j$ we have
\begin{equation*}
\begin{split}
|\partial_t\eta_j\varphi_j|\lesssim\left|\chi'\left(\frac{x}{|2R\mu_{0,j}|}\right)
\cdot\left(\frac{|x|\cdot\dot{\mu}_{0,j}R+|x|\mu_{0,j}
\cdot\dot{R}}{R^2\mu_{0,j}^2}\right)\right|\mu_{0,j}^{-\frac{n-2s}{2}}|\phi_j| t_0^{-l}\omega_{1,j}\|\phi_j\|_{j,a,\sigma}^{in}.
\end{split}
\end{equation*}

\noindent (v)~\ In the end, we study the term  $|\eta_j(f'(u_*)-f'(U_j))\varphi_j|,$ we shall give the details for $j=2,\cdots,k-1,$ while the other two cases $j=1$ or $j=k$ can be handled similarly. For fixed $j$, we notice that the support of $\eta_j$ is $\{|x|\le 4R\mu_{0,j}\}$ and we can divide it as follows
\begin{equation*}
\{x\mid|x|\le 4R\mu_{0,j}\}=\{x\mid\bar\mu_{0,j+1}\le|x|\le 4R\mu_{0,j}\}\cup\cup_{i=j+1}^k\{x\mid\bar\mu_{0,i+1}\le|x|\le \bar\mu_{0,i}\}.
\end{equation*}
In the first region $\{\bar{\mu}_{0,j+1}\le|x|\le4R\mu_{0,j}\}.$ Using Lemma $\ref{lea.1},$ Lemma $\ref{lea.2}$ and the mean value theorem, we deduce that
\begin{equation*}
\begin{split}		|f'(u_*)-f'(U_j)|&\lesssim|U_j|^{p-1}\left(\frac{|U_{j+1}|}{|U_j|}+\frac{|U_{j-1}|}{|U_j|}+\lambda_j^s\right)
\\&\lesssim\mu_j^{-2s}\langle y_j\rangle^{-4s}\cdot\left(\lambda_{j+1}^{-\frac{n-2s}{2}}\langle y_{j+1}\rangle^{2s-n}1_{\{\bar{\mu}_{0,j+1}\le|x|\le\mu_{0,j}\}}+\lambda_{j+1}^{\frac{n-2s}{2}}
+\lambda_j^{\frac{n-2s}{2}}\langle y_j\rangle^{n-2s}+\lambda_j^{s}\right).
\end{split}
\end{equation*}
Then
\begin{equation*}
\begin{split}
|(f'(u_*)-f'(U_j))\varphi_j\eta_j|1_{\{\bar{\mu}_{0,j+1}\le|x|\le 4R\mu_j\}}&\lesssim\mu_j^{-2s}\mu_j^{-\frac{n-2s}{2}}\lambda_j^{\frac{n-2s}{2}}
t^{-\sigma}\lambda_{j+1}^{-\frac{n-2s}{2}}\langle y_{j+1}\rangle^{2s-n}R^{a-2s}\|\phi_j\|_{j,a,\sigma}^{in}\\&\quad+(\lambda_{j+1}^{\frac{n-2s}{2}}
+\lambda_j^{\frac{n-2s}{2}}R^{n-2s}+\lambda_j^s)R^{a-2s}\omega_{1,j}\|\phi_j\|_{j,a,\sigma}^{in}\\
&\lesssim t_0^{-l}(\omega_{2,j}+\omega_{1,j})\|\phi_j\|_{j,a,\sigma}^{in}.
\end{split}
\end{equation*}
While in the region $\{\bar{\mu}_{0,i+1}\le|x|\le\bar{\mu}_{0,i}\},i=j+1,\cdots,k.$ Using Lemma $\ref{lea.1}$ and Lemma $\ref{lea.2}$ again, we have
\begin{equation*}
|f'(u_*)-f'(U_j)|\lesssim|U_i|^{p-1}\approx\mu_i^{-2s}\langle y_i\rangle^{-4s}
\end{equation*}
and
\begin{equation*}
|(f'(u_*)-f'(U_j))\varphi_j\eta_j|\lesssim t_0^{-l}\omega_{1,i}\|\phi_j\|_{j,a,\sigma}^{in}.
\end{equation*}
From (i)-(v), we derive the lemma.
\end{proof}

\begin{lemma}
\label{lea.4}
For $\alpha\in\left(0,\min\{s,\frac{n-6s}{2}\}\right),$  $\|\vec{\mu}_1\|_{\sigma}<1,$ there exists $\sigma,\epsilon$ small enough and   $t_0$ large enough such that
\begin{equation*}
E^{out}\lesssim t_0^{-l}\left(\sum_{j=1}^k\omega_{1,j}+\sum_{j=1}^{k-1}\omega_{2,j}+\omega_3\right).
\end{equation*}
\end{lemma}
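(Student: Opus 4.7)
The plan is to expand $E^{out}$ via the decomposition (2.8) as
\begin{equation*}
E^{out}=I_1+I_2+I_3-\sum_{j=1}^k\frac{(-1)^{j}}{\mu_j^{(n+2s)/2}}D_j[\vec{\mu}_1]\eta_j,
\end{equation*}
and then bound each resulting piece annulus by annulus, using the geometric hierarchy of scales $0<\mu_{0,k}<\bar{\mu}_{0,k}<\mu_{0,k-1}<\cdots<\mu_{0,1}<\bar{\mu}_{0,1}$. The three structural tools are: (i) the pointwise bubble comparisons of Lemma \ref{lea.1}, (ii) the bound $|\varphi_0|\lesssim\sum_j\lambda_j^s|U_j|\chi_j$ of Lemma \ref{lea.2}, and (iii) the explicit expressions (2.17)--(2.19) for $D_j,\Theta_j$.

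For $I_1$, the leading piece $\frac{(-1)^j}{\mu_j^{(n+2s)/2}}D_j\chi_j$ was designed precisely to cancel the subtracted correction on $\{\chi_j=\eta_j=1\}$. What survives splits into two kinds of remainders. First, the quadratic terms $\frac{(-1)^j}{\mu_j^{(n+2s)/2}}\Theta_j\chi_j$; because $\Theta_j$ is quadratic in $\vec{\mu}_1/\vec{\mu}_0$ and $\|\vec{\mu}_1\|_\sigma<1$, we gain an extra $t_0^{-l}$ factor, and the product $U^{p-1}\lambda_{0,j}^{(n-2s)/2}/\mu_j^{(n+2s)/2}$ is exactly the profile of $\omega_{1,j}$ on $\mathrm{supp}\,\chi_j$. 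Second, the mismatch terms $D_j(\chi_j-\eta_j)$ and $D_1(1-\eta_1)$: using $|Z_{n+1}(y_j)|\lesssim\langle y_j\rangle^{2s-n}$ and the fact that $R(t)=t^{\epsilon}$ is slowly growing, these are absorbed into $\omega_{1,j}''$ (in intermediate annuli) and $\omega_3$ (in the far field $|x|\ge\bar{\mu}_{0,1}$).

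For $I_2$, the error $\bar{E}_{11}$ is handled by identifying the dominant bubble $U_j$ in each shell $\{\bar{\mu}_{0,j+1}\le|x|\le\bar{\mu}_{0,j}\}$ and Taylor expanding $f(\bar{U})$ around $U_j$; the first-order interaction with the neighbor is killed by the correction term $f'(U_j)U_{j-1}(0)\chi_j$ built into $\bar{E}_{11}$, leaving a quadratic residual of size $|U_j|^{p-1}(|U_{j+1}|+|U_{j-1}|)$ plus the $\varphi_0$-self-error, which by Lemma \ref{lea.1} exactly matches $\omega_{2,j}$. The companion piece $(f'(\bar U)-f'(U_j))\varphi_{0,j}\chi_j$ is one order smaller and is reabsorbed by the same weights. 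For $I_3$, the nonlocal cut-off errors $(-\Delta)^s_x\chi_j\cdot\varphi_{0,j}$ and the Lie bracket $[-(-\Delta)^{s/2}_x\chi_j,-(-\Delta)^{s/2}_x\varphi_{0,j}]$ are estimated exactly as in steps (ii)--(iii) of Lemma \ref{lea.3}, with $(\eta_j,\varphi_j)$ replaced by $(\chi_j,\varphi_{0,j})$; this produces the $\omega_{1,j},\omega_{1,j}',\omega_{1,j}''$ contributions with the prefactor $t_0^{-l}$ coming from $|\dot{\mu}_{0,j}|/\mu_{0,j}\sim t^{-1}$. The time derivative $\partial_t(\varphi_{0,j}\chi_j)$ contributes at the same level, and the nonlinearity $N_{\bar U}[\varphi_0]=O(|\varphi_0|^{\min(2,p)})$ is subsumed because $|\varphi_0|$ carries an extra $\lambda_j^s$ factor.

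The main obstacle will be the far-field region $|x|\ge\bar{\mu}_{0,1}$: here the nonlocal tails of $\chi_1$ produce an algebraically decaying residual $(-\Delta)^s_x\chi_1(x)\sim\bar{\mu}_{0,1}^n|x|^{-n-2s}$ that no purely local weight can capture, and this is exactly why the new auxiliary weight $\omega_3$ must be introduced. The constraint $\alpha<\min\{s,(n-6s)/2\}$ emerges from two compatibility requirements: $\alpha<s$ is needed so that the decay $\langle y_j\rangle^{-2s-\alpha}$ of $\omega_{1,j}$ stays integrable against the fractional heat kernel with the right exponent $\gamma_j$, while $\alpha<(n-6s)/2$ is what makes the induction in the ratios $\lambda_{0,j}$ close when matching adjacent scales. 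Once these region-by-region bounds are assembled and the smallness of $t_0^{-l}$ is factored out (taking $\sigma,\epsilon$ small so that all the logarithmic/polynomial losses from $R(t)=t^{\epsilon}$ and the $\Theta_j$ terms are dominated), the stated estimate follows.
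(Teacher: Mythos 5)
Your proposal follows essentially the same route as the paper's proof: the same decomposition of $E^{out}$ into the bubble-interaction error $\bar{E}_{11}$, the $\Theta_j\chi_j$ and $D_j(\chi_j-\eta_j)$, $D_1(1-\eta_1)$ remainders, the terms $(f'(\bar{U})-f'(U_j))\varphi_{0,j}\chi_j$ and $N_{\bar{U}}[\varphi_0]$, and the nonlocal cut-off errors treated as in Lemma \ref{lea.3}, each estimated shell by shell via Lemmas \ref{lea.1}--\ref{lea.2} and absorbed into the weights $\omega_{1,j},\omega_{1,j}'',\omega_{2,j},\omega_3$. The only piece your sketch does not explicitly address is the contribution $\sum_{j\ge2}(1-\chi_j)\partial_t U_j$ inside $\bar{E}_{11}$, which the paper disposes of by the same region-by-region comparison, so nothing essential is missing.
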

\begin{proof}
By $\eqref{eq2.8}$ and $\eqref{eq3.11},$ we rewrite $E^{out}$ as
\begin{equation*}
\begin{split}		
E^{out}=S[u_*]-\sum_{j=1}^k\frac{(-1)^{j}}{\mu_j^{\frac{n+2s}{2}}}D_j[\vec{\mu}_1]\eta_j=
\bar{E}_{11}+\bar{E}_2+\bar{E}_3+\bar{E}_4+\bar{E}_5,
\end{split}
\end{equation*}
where $\bar{E}_{11}$ is defined in $\eqref{eq2.9},$ and
\begin{equation*}	\bar{E}_2:=-\mu_1^{-\frac{n+2s}{2}}D_1[\vec{\mu}_1](1-\eta_1)
+\sum_{j=2}^k\frac{(-1)^{j}}{\mu_j^{\frac{n+2s}{2}}}D_j[\vec{\mu}_1](\chi_j-\eta_j)
+\sum_{j=2}^k\frac{(-1)^{j}}{\mu_j^{\frac{n+2s}{2}}}\Theta_j[\vec{\mu}_1]\chi_j,
\end{equation*}
\begin{equation*}
\bar{E}_3:=-\sum_{j=2}^k(f'(\bar{U})-f'(U_j))\varphi_{0,j}\chi_j,\quad 	\bar{E}_4:=-N_{\bar{U}}[\varphi_0],
\end{equation*}
\begin{equation*}	\bar{E}_5:=\sum_{j=2}^k\left((-\Delta)_x^s\chi_j\varphi_{0,j}
-[-(-\Delta)_x^{\frac{s}{2}}\chi_j,-(-\Delta)_x^{\frac{s}{2}}\varphi_{0,j}]\right)
+\sum_{j=2}^k\partial_t(\varphi_{0,j}\chi_j).
\end{equation*}
We shall give the estimation for each term respectively in the following

\noindent (1)~\ \textbf{Estimate of $\bar{E}_2.$} For the term $-\mu_1^{-\frac{n+2s}{2}}D_1[\vec{\mu}_1](1-\eta_1),$ we observe that the support of $1-\eta_1$ is $\{|y_{0,1}|\ge 2R\}.$ From  the assumption $\|\vec{\mu}_1\|_\sigma<1,$ we have $|\dot{\mu}_{11}|\le \|\vec{\mu}_1\|_\sigma t^{-1-\sigma}\le t^{-1-\sigma}.$ In the view of $\eqref{eq2.19},$ we deduce that
\begin{equation*}
\begin{aligned}
\left|\mu_1^{-\frac{n+2s}{2}}D_1[\vec{\mu}_1](1-\eta_1)\right|&\lesssim t^{-1-\sigma}|x|^{2s-n}1_{\{|x|\ge 2R\}}\lesssim R^{4s+\alpha-n}t^{-1-\sigma}|x|^{-2s-\alpha}1_{\{1\le|x|\le\bar{\mu}_{0,1}\}}\\
&\quad+t^{-\delta(n-4s)}t^{\delta(n-4s)}\cdot t^{-1-\sigma}|x|^{2s-n}1_{\{|x|\ge\bar{\mu}_{0,1}\}}\\&=R^{4s+\alpha-n}\omega_{1,1}+ t^{-\delta(n-4s)}\omega_3\lesssim t_0^{-l}(\omega_{1,1}+\omega_3).
\end{aligned}
\end{equation*}
For $\sum_{j=2}^k\frac{(-1)^{j}}{\mu_j^{\frac{n+2s}{2}}}D_j[\vec{\mu}_1](\chi_j-\eta_j),$ we find that  $$\mathrm{Supp}\{\chi_j-\eta_j\}=\{|x|\le\bar{\mu}_{0,j+1}\}\cup\{2R\mu_{0,j}\le|x|\le\bar{\mu}_{0,j}\}.$$
In the former set, we see that $|\chi_j-\eta_j|\le 1_{\{|x|\le\bar{\mu}_{0,j+1}\}}.$ Hence,
\begin{equation*}
\begin{split}		\left|\frac{(-1)^{j}}{\mu_j^{\frac{n+2s}{2}}}D_j[\vec{\mu}_1](\chi_j-\eta_j)\right|
&\lesssim\frac{1}{\mu_j^{\frac{n+2s}{2}}}\lambda_{0,j}^{\frac{n-2s}{2}}t^{-\sigma}(|Z_{n+1}(y_j)|+\langle y_j\rangle^{-4s}) 1_{\{|x|\le\bar{\mu}_{0,j+1}\}}\\
&\lesssim\frac{\lambda_{0,j}^{\frac{n-2s}{2}}}{\mu_{0,j}^{\frac{n+2s}{2}}}t^{-\sigma}\langle y_j\rangle^{-4s} 1_{\{|x|\le\bar{\mu}_{0,j+1}\}}\lesssim t_0^{-l}\omega_{1,j+1}.
\end{split}
\end{equation*}
In the later set, we see that $|y_{0,j}|\ge 2R.$ Then
\begin{equation*}	\left|\frac{(-1)^{j}}{\mu_j^{\frac{n+2s}{2}}}D_j[\vec{\mu}_1](\chi_j-\eta_j)\right|
\le\frac{\lambda_{0,j}^{\frac{n-2s}{2}}}{\mu_{0,j}^{\frac{n+2s}{2}}}t^{-\sigma}\langle y_j\rangle^{-4s}1_{\{2R\mu_{0,j}\le|x|\le\bar{\mu}_{0,j}\}}\lesssim t_0^{-l}\omega_{1,j}.
\end{equation*}
For the left term in $\bar E_2$, by $\eqref{eq2.18}$ we have
\begin{equation*}
\left|\frac{(-1)^{j}}{\mu_j^{\frac{n+2s}{2}}}\Theta_j[\vec{\mu}_1]\chi_j\right|\lesssim t_0^{-l}\frac{\lambda_j^{\frac{n-2s}{2}}}{\mu_j^{\frac{n+2s}{2}}}t^{-\sigma}\langle y_j\rangle^{-4s}\chi_j\lesssim t_0^{-l}\omega_{1,j}.
\end{equation*}
\noindent (2)~\ \textbf{Estimate of $\bar{E}_3.$} Concerning the support of the function $\chi_j,$ by Lemma $\ref{lea.1}$ and the mean value theorem we have
\begin{equation*}
|f'(\bar{U})-f'(U_j)|\lesssim |U_j|^{p-1}\left(\frac{|U_{j+1}|}{|U_j|}+\frac{|U_{j-1}|}{|U_j|}\right).
\end{equation*}
Using $\eqref{eq2.6}$ and $\eqref{eq2.14}$ we get that $|\varphi_{0,j}|\le\frac{\lambda_{0,j}^{\frac{n-2s}{2}}}{\mu_j^{\frac{n-2s}{2}}}\langle y_j\rangle^{-2s}$ by Green's representation formula. Then following a similar argument as in Lemma $\ref{lea.3},$ we deduce that
\begin{equation*}
|\bar{E}_3|\lesssim t_0^{-l}\left(\sum_{j=2}^k\omega_{1,j}+\sum_{j=1}^{k-1}\omega_{2,j}\right).
\end{equation*}
\noindent (3)~\ \textbf{Estimate of $\bar{E}_{4}.$} Since $p\in(1,2),$ by Lemma $\ref{lea.2},$ we deduce that
\begin{equation*}
\begin{split}		|N_{\bar{U}}[\varphi_0]|\lesssim|\varphi_0|^p\lesssim\sum_{j=2}^k\mu_{j-1}^{-\frac{n-2s}{2}p}\langle y_j\rangle^{-2sp}\chi_j^p\lesssim\sum_{j=2}^k t^{2s(\alpha_{j-1}-\alpha_j)}\langle y_j\rangle^{2s+\alpha-2sp}\mu_j^{-2s}t^{\frac{n-2s}{2}\alpha_{j-1}}\langle y_j\rangle^{-2s-\alpha}\chi_j^p.
\end{split}
\end{equation*}
If $2sp\ge 2s+\alpha,$ then $|N_{\bar{U}}[\varphi_0]|\lesssim t_0^{-l}\sum_{j=2}^k\omega_{1,j}.$
Otherwise,  we get
\begin{equation*}	|N_{\bar{U}}[\varphi_0]|\lesssim\sum_{j=2}^kt^{\frac{2s-\alpha+2sp}{2}
(\alpha_{j-1}-\alpha_j)}\mu_j^{-2s}t^{\frac{n-2s}{2}\alpha_{j-1}}\langle y_j\rangle^{-2s-\alpha}\chi_j^p\lesssim t_0^{-l}\sum_{j=2}^k\omega_{1,j}.
\end{equation*}
\noindent (4)~\ \textbf{Estimate of $\bar{E}_5.$} For $\partial_t(\varphi_{0,j}\chi_j),$ we write
\begin{equation*}
\begin{split}		|\partial_t(\varphi_{0,j}\chi_j)|\le(|\partial_t(\varphi_{0,j})\chi_j|+|\varphi_{0,j}\partial_t\chi_j|)
\lesssim t^{\frac{n-2s}{2}\alpha_{j-1}-1}\langle y_j\rangle^{-2s}(\chi_j+|\nabla_x\chi_j|)
\lesssim t_0^{-l}\omega_{1,j}.
\end{split}
\end{equation*}
For $(-\Delta)_x^s\chi_j\varphi_{0,j},$ we get
\begin{equation*}
\begin{split}		|(-\Delta)_x^s\chi_j\varphi_{0,j}|&\lesssim\int_{\mathbb{R}^n}\frac{|\chi_j(x)-\chi_j(y)|}{|x-y|^{n+2s}}\,dy
\cdot t^{\frac{n-2s}{2}\alpha_{j-1}}\langle y_j\rangle^{-2s}\\&=t^{\frac{n-2s}{2}\alpha_{j-1}}\langle y_j\rangle^{-2s}\int_{\mathbb{R}^n}\frac{\chi\left(\frac{2|x|}{\bar{\mu}_{0,j}}\right)
-\chi\left(\frac{2|y|}{\bar{\mu}_{0,j}}\right)+\chi\left(\frac{2|y|}{\bar{\mu}_{0,j+1}}\right)
-\chi\left(\frac{2|x|}{\bar{\mu}_{0,j+1}}\right)}{|x-y|^{n+2s}}\,dy  \\&=\int_{\mathbb{R}^n}\frac{\chi\left(\frac{2|x|}{\bar{\mu}_{0,j}}\right)
-\chi\left(\frac{2|y|}{\bar{\mu}_{0,j}}\right)}{|x-y|^{n+2s}}\,dy\cdot t^{\frac{n-2s}{2}\alpha_{j-1}}\langle y_j\rangle^{-2s}\\&\quad+\int_{\mathbb{R}^n}\frac{\chi\left(\frac{2|y|}{\bar{\mu}_{0,j+1}}\right)
-\chi\left(\frac{2|x|}{\bar{\mu}_{0,j+1}}\right)}{|x-y|^{n+2s}}\,dy\cdot t^{\frac{n-2s}{2}\alpha_{j-1}}\langle y_j\rangle^{-2s}.
\end{split}
\end{equation*}
If $|x|\le2\bar{\mu}_{0,j},$
\begin{equation*}
\begin{aligned}   			
\left|\int_{\mathbb{R}^n}\frac{\chi\left(\frac{2|x|}{\bar{\mu}_{0,j}}\right)
-\chi\left(\frac{2|y|}{\bar{\mu}_{0,j}}\right)}{|x-y|^{n+2s}}\,dy\cdot t^{\frac{n-2s}{2}\alpha_{j-1}}\langle y_j\rangle^{-2s}\right|\lesssim t^{\sigma}\left(\frac{\mu_{0,j}}{\bar{\mu}_{0,j}}\right)^{2s}
\left(\frac{\mu_{j-1}}{\mu_j}\right)^{\frac{\alpha}{2}}\omega_{1,j}\lesssim t_0^{-l}\omega_{1,j}.
\end{aligned}
\end{equation*}
While if $|x|\ge2\bar{\mu}_{0,j},$
\begin{equation*}
\begin{split}		
\left|\int_{\mathbb{R}^n}\frac{\chi\left(\frac{2|x|}{\bar{\mu}_{0,j}}\right)
-\chi\left(\frac{2|y|}{\bar{\mu}_{0,j}}\right)}{|x-y|^{n+2s}}\,dy\cdot t^{\frac{n-2s}{2}\alpha_{j-1}}\langle y_j\rangle^{-2s}\right|\lesssim\bar{\mu}_{0,j}^{-2s}\cdot\frac{\bar{\mu}_{0,j}^{n+2s}}{|x|^{n+2s}}\cdot t^{\frac{n-2s}{2}\alpha_{j-1}}\langle y_j\rangle^{-2s}\lesssim t_0^{-l}\omega_{1,j}''.
\end{split}
\end{equation*}
Similarly,
\begin{equation*}	\left|\int_{\mathbb{R}^n}\frac{\chi\left(\frac{2|y|}{\bar{\mu}_{0,j+1}}\right)
-\chi\left(\frac{2|x|}{\bar{\mu}_{0,j+1}}\right)}{|x-y|^{n+2s}}\,dy\cdot t^{\frac{n-2s}{2}\alpha_{j-1}}\langle y_j\rangle^{-2s}\right|\lesssim t_0^{-l}\left(\omega_{1,j+1}+\omega_{1,j+1}''\right).
\end{equation*}
Therefore
\begin{equation}
\label{a.4.1}
|(-\Delta)_x^s\chi_j\varphi_{0,j}|\lesssim t_0^{-l}(\omega_{1,j}+\omega_{1,j+1}+\omega_{1,j}''+\omega_{1,j+1}'').
\end{equation}
For $-\sum_{j=2}^k\left[-(-\Delta)_x^{\frac{s}{2}}\chi_j,-(-\Delta)_x^{\frac{s}{2}}\varphi_{0,j}\right],$ we get
\begin{equation*}
\begin{split}	[-(-\Delta)_x^{\frac{s}{2}}\chi_j,-(-\Delta)_x^{\frac{s}{2}}\varphi_{0,j}]
&=\int_{\mathbb{R}^n}\frac{[\chi_j(y)-\chi_j(x)]\cdot[\varphi_{0,j}(x)-\varphi_{0,j}(y)]}{|x-y|^{n+2s}}\,dy\\
&=\int_{\mathbb{R}^n}\frac{\left[-\chi\left(\frac{2|x|}{\bar{\mu}_{0,j}}\right)
+\chi\left(\frac{2|y|}{\bar{\mu}_{0,j}}\right)\right]\cdot[\varphi_{0,j}(x)
-\varphi_{0,j}(y)]}{|x-y|^{n+2s}}\,dy\\&\quad+\int_{\mathbb{R}^n}
\frac{\left[-\chi\left(\frac{2|y|}{\bar{\mu}_{0,j+1}}\right)
+\chi\left(\frac{2|x|}{\bar{\mu}_{0,j+1}}\right)\right]
\cdot[\varphi_{0,j}(x)-\varphi_{0,j}(y)]}{|x-y|^{n+2s}}\,dy.
\end{split}
\end{equation*}
If $|x|\le2\bar{\mu}_{0,j},$
\begin{equation*}
\begin{split}	&\left|\int_{\mathbb{R}^n}\frac{\left[-\chi\left(\frac{2|x|}{\bar{\mu}_{0,j}}\right)
+\chi\left(\frac{2|y|}{\bar{\mu}_{0,j}}\right)\right]
\cdot[\varphi_{0,j}(x)-\varphi_{0,j}(y)]}{|x-y|^{n+2s}}\,dy\right|\\
&\lesssim\bar{\mu}_{0,j}^{-s}\frac{\lambda_{0,j}^{\frac{n-2s}{2}}}{\mu_j^{\frac{n-2s}{2}}}\mu_j^{-s}
\left[\int_{\mathbb{R}^n}\left(\frac{-\chi\left(\frac{2|x|}{\bar{\mu}_{0,j}}\right)
+\chi\left(\frac{2|y|}{\bar{\mu}_{0,j}}\right)}{|\frac{2x}{\bar{\mu}_{0,j}}
-\frac{2y}{\bar{\mu}_{0,j}}|^{\frac{n}{2}+s}}\right)^2\,d\frac{2y}{\bar{\mu}_{0,j}}\right]^{\frac{1}{2}}
\left[\int_{\mathbb{R}^n}\left(\frac{\bar{\phi}(\frac{x}{\mu_j})-\bar{\phi}(\frac{y}{\mu_j})}{|\frac{x}{\mu_j}
-\frac{y}{\mu_j}|^{\frac{n}{2}+s}}\right)^2\,d\frac{y}{\mu_j}\right]^{\frac{1}{2}}\\&\lesssim t_0^{-l}\omega_{1,j}.
\end{split}
\end{equation*}
While if $|x|\ge2\bar{\mu}_{0,j},$
\begin{equation*}
\begin{split}		\left|\int_{\mathbb{R}^n}\frac{\left[-\chi\left(\frac{2|x|}{\bar{\mu}_{0,j}}\right)
+\chi\left(\frac{2|y|}{\bar{\mu}_{0,j}}\right)\right]\cdot[\varphi_{0,j}(x)
-\varphi_{0,j}(y)]}{|x-y|^{n+2s}}\,dy\right|
\lesssim\frac{\lambda_{0,j}^{\frac{n-2s}{2}}}{\mu_j^{\frac{n-2s}{2}}}
\cdot\frac{\bar{\mu}_{0,j}^{n+2s}}{|x|^{n+2s}}\cdot\bar{\mu}_{0,j}^{-2s}
\frac{\mu_{0,j}^{2s}}{\bar{\mu}_{0,j}^{2s}}\lesssim t_0^{-l}\omega_{1,j}''.
\end{split}
\end{equation*}
Following a similar argument, we have
\begin{equation*} \left|\int_{\mathbb{R}^n}\frac{\left[-\chi\left(\frac{2|y|}{\bar{\mu}_{0,j+1}}\right)
+\chi\left(\frac{2|x|}{\bar{\mu}_{0,j+1}}\right)\right]\cdot[\varphi_{0,j}(x)
-\varphi_{0,j}(y)]}{|x-y|^{n+2s}}\,dy\right|\lesssim t_0^{-l}(\omega_{1,j+1}+\omega_{1,j+1}'').
\end{equation*}
Then
\begin{equation*}		\left|\left[-(-\Delta)_x^{\frac{s}{2}}\chi_j,-(-\Delta)_x^{\frac{s}{2}}\varphi_{0,j}\right]\right|\lesssim t_0^{-l}(\omega_{1,j}+\omega_{1,j+1}+\omega_{1,j}''+\omega_{1,j+1}'').
\end{equation*}
\noindent (5)~\ \textbf{Estimate of $\bar{E}_{11}.$} By $\eqref{eq2.9},$ we see that
\begin{equation*}
\begin{split}		
\bar{E}_{11}=-\left[f(\bar{U})-\sum_{j=1}^kf(U_j)-\sum_{j=2}^kf'(U_j)U_{j-1}(0)\chi_j
-\sum_{j=2}^k(1-\chi_j)\partial_t U_j\right]=J_1+J_2+J_3+J_4,
\end{split}
\end{equation*}
where
\begin{equation*}		J_1:=-\sum_{j=2}^kf'(U_j)\left(\sum_{l\not=j,j-1}U_l\right)\chi_j
-\sum_{j=2}^kf'(U_j)(U_{j-1}-U_{j-1}(0))\chi_j,
\end{equation*}
\begin{equation*}
J_2:=-\sum_{j=2}^k\left[f(\bar{U})-\sum_{i=1}^kf(U_i)-f'(U_i)\sum_{l\not=j}U_l\right]\chi_j,
\end{equation*}
\begin{equation*}
J_3:=\sum_{j=2}^k(1-\chi_j)\partial_t U_j,\quad	J_4:=-\left[f(\bar{U})-\sum_{i=1}^k f(U_i)\right]\cdot\left(1-\sum_{j=2}^k\chi_j\right).
\end{equation*}
\noindent (5.1)~\ \textbf{Estimate of $J_1.$} We start with the term
\begin{equation*}
-\sum_{j=2}^kf'(U_j)\left(\sum_{i\not=j,j-1}U_i\right)\chi_j.
\end{equation*}
Let us fix $j.$ If $i\le j-2,$ we have
\begin{equation*}
|f'(U_j)U_i\chi_j|\lesssim\mu_j^{-2s}\langle y_j\rangle^{-4s}\mu_{j-2}^{-\frac{n-2s}{2}}\chi_j\lesssim t_0^{-l}\omega_{1,j}.
\end{equation*}
If $i>j,$ using Lemma $\ref{lea.1},$ we have
\begin{equation*}
\begin{split}
|f'(U_j)U_i\chi_j|&\lesssim |U_j|^p\cdot\frac{|U_{j+1}|}{|U_j|}\chi_j\lesssim\mu_j^{-\frac{n+2s}{2}}\langle y_j\rangle^{-n-2s}\left(\lambda_{j+1}^{-\frac{n-2s}{2}}\langle y_{j+1}\rangle^{2s-n}1_{\{\bar{\mu}_{0,j+1}\le|x|\le {\mu}_{0,j}\}}+\lambda_{j+1}^{\frac{n-2s}{2}}\right)\chi_j\\
&\lesssim\mu_{j+1}^{\frac{n-2s}{2}}\mu_j^{-2s}|x|^{2s-n}1_{\{\bar{\mu}_{0,j+1}\le|x|\le\mu_{0,j}\}}
+\left(\frac{\lambda_{j+1}}{\lambda_j}\right)^{\frac{n-2s}{2}}t^\sigma\omega_{1,j}\lesssim t_0^{-l}(\omega_{2,j}+\omega_{1,j}).
\end{split}
\end{equation*}
The another term in $J_1$ is $-\sum_{j=2}^kf'(U_j)(U_{j-1}-U_{j-1}(0))\chi_j,$ which can be bounded as follow
\begin{equation*}
|f'(U_j)(U_{j-1}-U_{j-1}(0))\chi_j|\lesssim\mu_j^{-2s}\langle y_j\rangle^{-4s}\mu_{j-1}^{-\frac{n-2s}{2}}\lambda_j\chi_j\lesssim t_0^{-l}\omega_{1,j}.
\end{equation*}
			
\noindent (5.2)~\ \textbf{Estimate of $J_2.$} Using the mean value theorem, the definition of $\chi_j$ and Lemma $\ref{lea.1},$ we deduce that
\begin{equation*}			\left|f(\bar{U})-\sum_{i=1}^kf(U_i)-f'(U_j)\sum_{l\not=j}U_l\right|\chi_j\lesssim(|U_{j-1}|^p+|U_{j+1}|^p)\chi_j.
\end{equation*}
Then
\begin{equation*}
\begin{split}
|U_{j-1}|^p\chi_j\lesssim\mu_{j-1}^{-\frac{n+2s}{2}}\chi_j\lesssim t^{\frac{n+2s}{2}\alpha_{j-1}}\mu_j^{2s}t^{-\frac{n-2s}{2}\alpha_{j-1}}t^\sigma
\left(\frac{\bar{\mu}_j}{\mu_j}\right)^{2s+\alpha}\omega_{1,j}\chi_j
\lesssim\left(\frac{\mu_j}{\mu_{j-1}}\right)^{\frac{2s-\alpha}{2}}t^\sigma\omega_{1,j}\chi_j
\lesssim t_0^{-l}\omega_{1,j},
\end{split}
\end{equation*}
and
\begin{equation*}
|U_{j+1}|^p\chi_j\lesssim\mu_{j+1}^{\frac{n+2s}{2}}|x|^{-2s-n}\chi_j\lesssim t_0^{-l}\omega_{2,j}.
\end{equation*}
			
\noindent (5.3)~\ \textbf{Estimate of $J_3.$} For $j=2,\cdots,k,$ we find that
\begin{equation*}
|\partial_t U_j|=|\dot{\mu}_j\mu_j^{2s-1}\mu_j^{-\frac{n+2s}{2}}Z_{n+1}(y_j)|
\lesssim\mu_j^{-2s}\mu_{j-1}^{-\frac{n-2s}{2}}\langle y_j\rangle^{2s-n}.
\end{equation*}
In addition, the support of $1-\chi_j$ is $\{|x|\le\bar{\mu}_{0,j+1}\}\cup\{\frac{1}{2}\bar{\mu}_{0,j}\le|x|<\bar{\mu}_{0,j}\}
\cup\{\bar{\mu}_{0,j}\le|x|\}.$
In the first set, we see that $1-\chi_j\le1_{\{|x|\le\bar{\mu}_{0,j+1}\}}.$ Then
\begin{equation*}
\begin{aligned}
|(1-\chi_j)\partial_t U_j|	\lesssim\left(\frac{\mu_{j+1}}{\mu_j}\right)^{\frac{2s-\alpha}{2}}
\left(\frac{\mu_j}{\mu_{j-1}}\right)^{\frac{n-2s}{2}}t^\sigma\omega_{1,j+1}
\chi\left(\frac{2|x|}{\bar{\mu}_{0,j+1}}\right)\lesssim t_0^{-l}\omega_{1,j+1}.
\end{aligned}
\end{equation*}
In the second set, we get
\begin{equation*}
\begin{aligned}
|\partial_t U_j|1_{\{\frac{1}{2}\bar{\mu}_{0,j}\le|x|<\bar{\mu}_{0,j}\}}\lesssim
\left(\frac{\mu_j}{\mu_{j-1}}\right)^{\frac{n-4s-\alpha}{2}}t^\sigma\omega_{1,j} 1_{\{\frac{1}{2}\bar{\mu}_{0,j}\le|x|<\bar{\mu}_{0,j}\}}\lesssim t_0^{-l}\omega_{1,j}.
\end{aligned}
\end{equation*}
In the third set, we can split it further to be
\begin{equation*}		\{\bar{\mu}_{0,j}\le|x|\}=\cup_{i=2}^j\{\bar{\mu}_{0,i}\le|x|\le\bar{\mu}_{0,i-1}\}\cup\{\bar{\mu}_{0,1}\le|x|\}.
\end{equation*}
In $\{\bar{\mu}_{0,i}\le|x|\le\bar{\mu}_{0,i-1}\},i=2,\cdots,j,$
\begin{equation*}
|\partial_t U_j|\lesssim\mu_i^{n-4s}\mu_{i-1}^{-\frac{n-2s}{2}}|x|^{2s-n}\lesssim t_0^{-l}\omega_{2,i-1}.
\end{equation*}
While in $\{\bar{\mu}_{0,1}\le |x|\},$ we get $|\partial_t U_j|\lesssim\mu_2^{n-4s}|x|^{2s-n}\lesssim t_0^{-l}\omega_3.$
\medskip
			
\noindent (5.4)~\ \textbf{Estimate of $J_4.$} Observing that  $$\mathrm{Supp}\{J_4\}\subset\cup_{i=3}^k\left\{\frac{1}{2}\bar{\mu}_{0,i}\le|x|\le\bar{\mu}_{0,i}\right\}
\cup\left\{\frac{1}{2}\bar{\mu}_{0,2}\le|x|\right\}.$$
By Lemma \ref{lea.1}, we have
$$|U_i|\approx |U_{i-1}|\approx\mu_{i-1}^{-\frac{n-2s}{2}}\gg|U_m|~\mbox{in}~
\left\{\frac{1}{2}\bar{\mu}_{0,i}\le|x|\le\bar{\mu}_{0,i}\right\}~\mbox{for}~m\not=i,i-1,\quad i=3,\cdots,k.$$
Hence
\begin{equation*}
\begin{aligned}			
|J_4|1_{\{\frac{1}{2}\bar{\mu}_{0,i}\le|x|\le\bar{\mu}_{0,i}\}}
\lesssim\mu_{i-1}^{-\frac{n+2s}{2}}1_{\{\frac{1}{2}\bar{\mu}_{0,i}\le|x|\le\bar{\mu}_{0,i}\}}
\lesssim\left(\frac{\mu_{i-1}}{\mu_i}\right)^{\frac{\alpha-2s}{2}}t^\sigma\omega_{1,i}\lesssim t_0^{-l}\omega_{1,i}.
\end{aligned}
\end{equation*}
In particular, in $\{\frac{1}{2}\bar{\mu}_{0,2}\le|x|\},$ by Lemma $\ref{lea.1},$ we deduce that
\begin{equation*}
\begin{split}
|J_4|\lesssim |U_1|^{p-1}|U_2|&\approx\mu_2^{\frac{n-2s}{2}}\left(|x|^{2s-n}
1_{\{\frac{1}{2}\bar{\mu}_{0,2}\le|x|\le\bar{\mu}_{0,2}\}}+|x|^{2s-n}1_{\{\bar{\mu}_{0,2}\le|x|\le1\}}\right)
\\&\quad+\mu_2^{\frac{n-2s}{2}}\left(|x|^{-2s-n}1_{\{1\le|x|\le\bar{\mu}_{0,1}\}}
+|x|^{-2s-n}1_{\{\bar{\mu}_{0,1}\le|x|\}}\right)\\&\lesssim t_0^{-l}(\omega_{1,2}+\omega_{2,1}+\omega_{1,1}+\omega_3).
\end{split}
\end{equation*}

Combining the computations from (1) to (5), we complete the proof  of Lemma $\ref{lea.4}.$
\end{proof}

Based on Lemma $\ref{le5.1},$ we have the following estimates.
\begin{lemma}\label{lea.5}
There exist $\sigma>0,\delta>0$ small enough and $t_0$ large enough, such that for $t>t_0,$
\begin{itemize}
\item[(1)] $(\omega_{1,j}')^*+(\omega_{1,j})^*\lesssim(\omega_{1,j}'')^*,~j=2,\cdots,k$. While if $|x|\le t^{\frac{1}{2s}},$ $(\omega_{1,1}')^*\lesssim(\omega_{1,1})^*.$
		
\item [(2)] In $\{|x|\le\bar{\mu}_{0,i}\},i=3,\cdots,k,$ we have $(\omega_{1,j}'')^*\lesssim(\omega_{1,i}'')^*$ for $j=2,\cdots,i-1.$
		
\item [(3)] In $\{\bar{\mu}_{0,i}\le|x|\le t^{\frac{1}{2s}}\},i=2,\cdots,k,$ we have $(\omega_{1,j}'')^*\lesssim(\omega_{1,i}'')^*$ for $j=i+1,\cdots,k.$
		
\item [(4)]
In $\{|x|\le\bar{\mu}_{0,j}\},$ $\omega_3^*\lesssim(\omega_{1,j}'')^*$ for $j=2,\cdots,k.$   In  $\{|x|\ge t^{\frac{1}{2s}}\},$ $\omega_3^*\gtrsim\omega_{1,j}^*+(\omega_{1,j}')^*+(\omega_{1,i}'')^*$ for $j=1,\cdots,k,i=2,\cdots k.$
\end{itemize}
Consequently,
\begin{equation}\label{eqa.20}
\sum_{j=1}^k(\omega_{1,j}^*+(\omega_{1,j}')^*)+\sum_{j=2}^k(\omega_{1,j}'')^*+\omega_3^*\lesssim
\begin{cases}
(\omega_{1,k}'')^*&\text{if $|x|\le\bar{\mu}_{0,k},$}\\
(\omega_{1,i}'')^*+(\omega_{1,i+1}'')^*&\text{if $\bar{\mu}_{0,i+1}\le|x|\le\bar{\mu}_{0,i},~i=2,\cdots,k-1,$}\\
\omega_{1,1}^*+\omega_3^*+(\omega_{1,2}'')^*&\text{if $\bar{\mu}_{0,2}\le|x|\le t^{\frac{1}{2s}},$}\\
\omega_3^*&\text{if $|x|\ge t^{\frac{1}{2s}}.$}
\end{cases}
\end{equation}
\end{lemma}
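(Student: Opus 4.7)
The plan is to reduce each of items (1)--(4) and the consequent inequality \eqref{eqa.20} to pointwise comparisons between explicit power-law functions. All the starred weights produced by Lemma \ref{le5.1} are piecewise monomials in $|x|$ and $t$ whose pieces are determined by the scales $\bar\mu_{0,k}<\cdots<\bar\mu_{0,1}$ and by the parabolic scale $t^{1/(2s)}$, so in each of the resulting regions both sides of every inequality are single monomials. Before splitting into cases I would record the algebraic identity
\[
\bar\mu_{0,j}^{\,n-2s}\,t^{\gamma_j}\approx t^{-\frac{n-2s}{2}\alpha_j-\sigma},\qquad j\ge 2,
\]
derived from $\bar\mu_{0,j}\approx t^{-(\alpha_j+\alpha_{j-1})/2}$ and $\gamma_j=\frac{n-2s}{2}\alpha_{j-1}-\sigma$. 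Since $\alpha_j$ is strictly increasing, this identity collapses the combined temporal factor into a single monotone quantity and is the key bookkeeping tool for the entire lemma.

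Items (1), (2) and (4) I expect to be essentially direct. For (1), splitting at $|x|=\bar\mu_{0,j}$ (respectively $4\bar\mu_{0,j}$), the ratio $(\omega_{1,j}')^*/(\omega_{1,j}'')^*$ equals $(\mu_{0,j}/\bar\mu_{0,j})^n\le 1$ in both sub-regions, while $\omega_{1,j}^*/(\omega_{1,j}'')^*$ is either $1$ or $(\mu_{0,j}/\bar\mu_{0,j})^\alpha\le 1$; the $j=1$ statement follows because the extra power $t^{-\delta s}$ that appears on the left is $\le 1$. For (2), in $\{|x|\le\bar\mu_{0,i}\}\subset\{|x|\le\bar\mu_{0,j}\}$ both weights reduce to the constants $t^{\gamma_j}$ and $t^{\gamma_i}$, and $\gamma_j<\gamma_i$ when $j<i$. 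Item (4) is two exponent comparisons: on $\{|x|\le\bar\mu_{0,1}\}$ one has $\omega_3^*\approx t^{-1-\sigma}\le t^{-\sigma}\le t^{\gamma_j}=(\omega_{1,j}'')^*$; on $\{|x|\ge t^{1/(2s)}\}$ the temporal prefactor of $\omega_3^*$ is $t^{\delta(n-4s)-\sigma}$, and applying the identity I would check that the differences of exponents against $\omega_{1,j}^*$, $(\omega_{1,j}')^*$ and $(\omega_{1,i}'')^*$ are all strictly negative provided $\delta$ is small.

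The step that will require genuine care is (3). For $j>i$ and $|x|\ge\bar\mu_{0,i}\ge\bar\mu_{0,j}$ both weights live in their far-field form, so
\[
\frac{(\omega_{1,j}'')^*}{(\omega_{1,i}'')^*}=\frac{\bar\mu_{0,j}^{\,n-2s}\,t^{\gamma_j}}{\bar\mu_{0,i}^{\,n-2s}\,t^{\gamma_i}}\approx t^{-\frac{n-2s}{2}(\alpha_j-\alpha_i)}\lesssim 1,
\]
by the identity above. The subtle point is that $\gamma_j-\gamma_i>0$ when $j>i$, so $t^{\gamma_j}$ alone grows relative to $t^{\gamma_i}$; one must combine $\bar\mu_{0,j}^{\,n-2s}$ and $t^{\gamma_j}$ before comparing, and this is exactly what the identity does. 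This is the only place in the lemma where the cancellation between the two monotone factors is nontrivial, and is the main obstacle I anticipate.

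Finally, \eqref{eqa.20} is a direct assembly. In $\{|x|\le\bar\mu_{0,k}\}$ items (1), (2), (4) collapse every weight onto $(\omega_{1,k}'')^*$; in the annulus $\{\bar\mu_{0,i+1}\le|x|\le\bar\mu_{0,i}\}$ item (2) absorbs indices $j\le i-1$ and item (3) absorbs $j\ge i+2$, leaving exactly the two terms $(\omega_{1,i}'')^*+(\omega_{1,i+1}'')^*$ after (1) and (4) mop up the remaining pieces; in $\{\bar\mu_{0,2}\le|x|\le t^{1/(2s)}\}$ items (3) and (1) collapse the $j\ge 2$ weights onto $(\omega_{1,2}'')^*$ and the $j=1$ weights onto $\omega_{1,1}^*$, with $\omega_3^*$ retained; and in $\{|x|\ge t^{1/(2s)}\}$ item (4) absorbs everything into $\omega_3^*$.
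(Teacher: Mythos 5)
Your proposal is correct and follows essentially the same route as the paper: both reduce every item to direct comparisons of the piecewise monomials from Lemma \ref{le5.1}, with the decisive observation for item (3) being that $\bar\mu_{0,j}^{\,n-2s}t^{\gamma_j}=t^{-\frac{n-2s}{2}\alpha_j-\sigma}$ is decreasing in $j$ (the paper states this comparison without spelling out the cancellation you make explicit). The assembly of \eqref{eqa.20} region by region also matches the paper's argument.
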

\begin{proof}
From the results in Lemma $\ref{le5.1},$ by direct computation we get $(1)$. Concerning the point (2), we notice that
$$(\omega_{1,j}'')^*=t^{\gamma_j}\le t^{\gamma_{i}}=(\omega_{1,i}'')^*~\mathrm{in}~\{|x|\le\bar{\mu}_{0,i}\},\quad j=2,\cdots,i-1.$$
For the point (3), we have
$$(\omega_{1,j}'')^*(x,t)=\bar{\mu}_{0,j}^{n-2s}t^{\gamma_j}|x|^{2s-n}
\lesssim\bar{\mu}_{0,i}^{n-2s}t^{\gamma_i}|x|^{2s-n}=(\omega_{1,i}'')^*~\mathrm{in}~\{\bar{\mu}_{0,i}\le|x|\le t^{\frac{1}{2s}}\},\quad j=i+1,\cdots,k.$$
For the last point, using the fact $t^{\delta(n-4s)}t^{-1-\sigma}\bar{\mu}_{0,1}^{4s-n}\lesssim t^{-\sigma}=(\omega_{1,2}'')^*$ for $x\in\{|x|\le \bar{\mu}_{0,2}\}$. Combined with the above arguments we get
$$\omega_3^*\lesssim (w_{1,j}'')^*~\mbox{in}~\{|x|\leq\bar\mu_{0,j}\}~\mbox{for}~j=2,\cdots,k.$$
In the end, if $|x|\ge t^{\frac{1}{2s}},$ by Lemma $\ref{le5.1}$ we see that
\begin{equation*}
\begin{split}
t^{\delta(n-4s)}\cdot t^{-\sigma}\cdot|x|^{2s-n}\gtrsim t^{\gamma_1+\delta(n-2s-\alpha)}|x|^{2s-n}+t^{\gamma_1+\delta(n-3s-\alpha)}|x|^{2s-n}
+t^{-\frac{(\alpha_j+\alpha_{j-1})(n-2s)}{2}+\gamma_j}|x|^{2s-n}.
\end{split}
\end{equation*}
It implies that $\omega_3^*\gtrsim\omega_{1,j}^*+(\omega_{1,j}')^*+(\omega_{1,i}'')^*$ for $j=1,\cdots,k,i=2,\cdots k.$
\end{proof}

Following a similar argument as above we have the following Lemma.
\begin{lemma}\label{lea.6}
There exists $t_0$ large enough such that
\begin{enumerate}
\item [(1)] In $\{\bar{\mu}_{0,i+1}\le|x|\},$ $i=1,\cdots,k-1,$ we have $\omega_{2,j}^*\lesssim\omega_{2,i}^*$ for $j=i+1,\cdots,k.$
		
\item [(2)] In $\{|x|\le\bar{\mu}_{0,i}\},$ $i=2,\cdots,k,$ we have $\omega_{2,i-1}^*\gtrsim\omega_{2,j}^*$ for $j=1.\cdots,i-1.$
		
\item [(3)] In $\{|x|\ge\bar{\mu}_{0,1}\},$ $\omega_{2,j}^*\lesssim\omega_3^*$ for $j=1,\cdots,k.$
\end{enumerate}
Consequently
\begin{equation}\label{eqa.21}
\sum_{j=1}^{k-1}\omega_{2,j}^*\lesssim
\begin{cases}
\omega_{2,k-1}^*&\text{if $|x|\le\bar{\mu}_{0,k},$}\\
\omega_{2,i}^*+\omega_{2,i-1}^*&\text{if $\bar{\mu}_{0,i+1}\le|x|\le\bar{\mu}_{0,i},i=2,\cdots,k-1,$}\\
\omega_{2,1}^*&\text{if $\bar{\mu}_{0,2}\le|x|\le\bar{\mu}_{0,1},$}\\
\omega_3^*&\text{if $|x|\ge\bar{\mu}_{0,1}.$}
\end{cases}
\end{equation}
\end{lemma}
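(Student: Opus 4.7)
The plan is a direct, region-by-region comparison of the piecewise formulas recorded in Lemma \ref{le5.1}, exploiting the rapid ordering $\mu_{0,k}\ll\mu_{0,k-1}\ll\cdots\ll\mu_{0,1}$ (equivalently $\lambda_{0,j+1}=\mu_{0,j+1}/\mu_{0,j}\to 0$) and the corresponding ordering $\bar{\mu}_{0,k+1}\ll\cdots\ll\bar{\mu}_{0,1}$. Each inequality reduces, after cancelling the common $|x|$-power, to a monomial comparison in the parameters $\mu_{0,m}$; the resulting positive power of $\lambda_{0,m}$ (hence of $t^{-1}$) absorbs any constants once $t_0$ is large.

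For (1) I would fix $i$ and establish the one-step inequality $\omega_{2,j+1}^*\lesssim\omega_{2,j}^*$ in $\{\bar{\mu}_{0,i+1}\le|x|\}$ for each $j\ge i$; iteration then gives the full statement. Since $|x|\ge\bar{\mu}_{0,i+1}\ge\bar{\mu}_{0,j+1}$, only the middle branch $\{\bar{\mu}_{0,j+1}\le|x|\le\bar{\mu}_{0,j}\}$ and the outer branch $\{|x|\ge\bar{\mu}_{0,j}\}$ of $\omega_{2,j}^*$ intervene. I would substitute the explicit expressions, cancel the shared $|x|^{2s-n}$ or $|x|^{4s-n}$ factor, and verify that the leftover ratio is controlled by a positive power of $\lambda_{0,j+2}$ (coming from the gap between $\mu_{0,j+2}^{n/2-2s}$ and $\mu_{0,j+1}^{n/2-2s}$). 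The transition $j=1\to j=2$ needs separate bookkeeping because $\omega_{2,1}^*$ is structurally different from $\omega_{2,j}^*$ for $j\ge 2$.

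For (2) I would use the identical strategy in $\{|x|\le\bar{\mu}_{0,i}\}$, iterating $j$ downward from $i-1$, so that only the inner and middle branches of $\omega_{2,j}^*$ contribute. For (3) I would split $\{|x|\ge\bar{\mu}_{0,1}\}$ into $\{\bar{\mu}_{0,1}\le|x|\le t^{1/(2s)}\}$ and $\{|x|\ge t^{1/(2s)}\}$, and in each match the outer-branch formula of $\omega_{2,j}^*$ against the corresponding branch of $\omega_3^*$; the prefactor $t^{\delta(n-4s)}$ in $\omega_3^*$, together with the small power of $t^{-1}$ produced by $\mu_{0,j+1}^{n/2-2s}\mu_{0,j-1}^s$, absorbs the comparison provided $\delta$ is chosen sufficiently small.

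The collective bound \eqref{eqa.21} is then obtained by partitioning $\mathbb{R}^n$ into the four listed regions and applying the appropriate combination of (1)--(3): (2) gives $\omega_{2,k-1}^*$ as the dominant term inside $\bar{\mu}_{0,k}$; (1) and (2) together reduce the sum in each middle ring to the two adjacent weights $\omega_{2,i}^*$ and $\omega_{2,i-1}^*$; the ring $\{\bar{\mu}_{0,2}\le|x|\le\bar{\mu}_{0,1}\}$ collapses to $\omega_{2,1}^*$ by (1); and (3) yields $\omega_3^*$ outside $\bar{\mu}_{0,1}$. The main difficulty is not analytical but notational: one must keep track, branch by branch, of the distinct exponents of $\mu_{0,j\pm 1}$ and $\mu_{0,j+1}$ appearing in $\omega_{2,j}^*$, and handle the special form of $\omega_{2,1}^*$ separately from the generic $j\ge 2$ case.
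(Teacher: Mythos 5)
Your proposal is correct and is essentially the argument the paper intends: the paper gives no written proof of this lemma beyond ``following a similar argument as above,'' i.e.\ the same direct, branch-by-branch comparison of the explicit formulas from Lemma \ref{le5.1} used for Lemma \ref{lea.5}, which is exactly what you describe (monotone orderings of the $\mu_{0,j}$ for (1)--(2), and the smallness of $\delta$ together with $\alpha_2(n-4s)/2>1$ for (3)). The only caveat is cosmetic: the index ranges ``$j=i+1,\dots,k$'' and ``$j=1,\dots,k$'' in the statement should read $k-1$, since $\omega_{2,j}$ is only defined for $j\le k-1$.
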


\begin{lemma}\label{lea.7}
There exists  $t_0>0$ large enough such that
\begin{equation*}
\|\mathcal{T}^{out}[V\Psi]\|_{\alpha,\sigma}^{out,*}\lesssim t_0^{-l}\|\Psi\|_{\alpha,\sigma}^{out,*}.
\end{equation*}
\end{lemma}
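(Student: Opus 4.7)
The strategy is to produce a pointwise bound of the form
\[
|V(x,t)\Psi(x,t)|\;\lesssim\; t_0^{-l}\left(\sum_{j=1}^k(\omega_{1,j}+\omega_{1,j}')+\sum_{j=2}^k\omega_{1,j}''+\sum_{j=1}^{k-1}\omega_{2,j}+\omega_3\right)\|\Psi\|_{\alpha,\sigma}^{out,*},
\]
and then invoke Lemma \ref{le5.1} term by term to transfer each weight $\omega_\sharp$ into its starred counterpart $\omega_\sharp^*$. Once this pointwise bound is in place, the conclusion follows from the linearity of $\mathcal{T}^{out}$.

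First I would localize in space. Partition $\mathbb{R}^n$ into the annuli
\[
A_k:=\{|x|\le\bar{\mu}_{0,k}\},\quad A_j:=\{\bar{\mu}_{0,j+1}\le|x|\le\bar{\mu}_{0,j}\}\text{ for }j=2,\dots,k-1,\quad A_1:=\{\bar\mu_{0,2}\le|x|\le\bar\mu_{0,1}\},
\]
and the outer region $A_0:=\{|x|\ge\bar\mu_{0,1}\}$. In each $A_j$ identify the unique index $i=i(x)$ with $\zeta_i\equiv 1$ (the ``dominant bubble'' region) versus the ramp regions where $\zeta_i$ transitions between $0$ and $1$. In the interior of $A_j$ near the $j$-th bubble, we have $V=f'(u_*)-f'(U_j)$, and the estimate
\[
|V|\lesssim |U_j|^{p-1}\Bigl(\tfrac{|U_{j+1}|}{|U_j|}+\tfrac{|U_{j-1}|}{|U_j|}+\lambda_j^s\Bigr),
\]
carried out exactly as for $\bar E_3$ in Lemma \ref{lea.4}, combined with the ansatz bound $|\Psi|\le \|\Psi\|_{\alpha,\sigma}^{out,*}\cdot(\text{sum of starred weights})$, furnishes the gain. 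In the ramp regions and in $A_0$ (where some $\zeta_i\ne 1$) one uses $|V|\lesssim f'(u_*)\lesssim\max_i|U_i|^{p-1}$ directly; the smallness then comes from the fact that in such regions the bubble profile $|U_j|^{p-1}$ multiplied by the weight bound on $\Psi$ already produces a factor $t_0^{-l}$ through ratios such as $(\mu_{j+1}/\mu_j)^{\frac{n-2s-\alpha}{2}}$, $\lambda_j^s$, or $R^{a-2s}$.

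Next I would consolidate, exploiting Lemmas \ref{lea.5} and \ref{lea.6}, which tell us that in any one of the annuli $A_j$ the sum $\sum\omega_\sharp^*$ collapses to just a few representative terms (e.g.\ $(\omega_{1,k}'')^*$ in $A_k$, $\omega_{1,1}^*+\omega_3^*+(\omega_{1,2}'')^*$ in $\{\bar\mu_{0,2}\le|x|\le t^{1/(2s)}\}$, and $\omega_3^*$ for $|x|\ge t^{1/(2s)}$). This reduces the estimate in each annulus to verifying that the product $|V|$ (on that annulus) times the dominant weight of $\Psi$ is bounded by $t_0^{-l}$ times a single weight on the allowed list. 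Applying $\mathcal{T}^{out}$ and invoking Lemma \ref{le5.1}, every such weight is sent to its starred counterpart, and the consolidation lemmas repackage the result into the $\|\cdot\|_{\alpha,\sigma}^{out,*}$ topology.

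The main obstacle, as in the proofs of Lemmas \ref{lea.3} and \ref{lea.4}, is bookkeeping in the transition regions $\{2R\mu_{0,j}\le|x|\le\bar\mu_{0,j}\}$ and $\{\bar\mu_{0,j+1}\le |x|\le R^{-1}\mu_{0,j}\}$, where $\zeta_j$ is neither $0$ nor $1$ and one loses the Taylor cancellation $f'(u_*)-f'(U_j)$. Here the smallness must be extracted from the \emph{size} of $f'(u_*)$ rather than from cancellation; one has to check that in every such transition slab the estimate of $|U_\ell|^{p-1}\cdot(\text{dominant }\omega_\sharp)$ carries a power $t^{-\eta}$ with $\eta>0$ (coming from ratios $\lambda_{j+1}^s$, $\lambda_j^s$, or from the slow-growth factor $R=t^\epsilon$), and that this $\eta$ is independent of the ``spent'' margin in the definitions of $\omega_{1,j}$ and $\omega_{2,j}$. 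Once each slab is handled, choosing $l:=\min\eta/2$ with $\epsilon,\sigma$ small and $t_0$ large absorbs all losses and gives the stated estimate.
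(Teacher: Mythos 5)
Your overall architecture matches the paper's: split $V$ into the part supported where $\sum_j\zeta_j\neq 1$ and the parts $\zeta_i\,p(u_*^{p-1}-U_i^{p-1})$, bound $|V\Psi|$ region by region using the consolidation Lemmas \ref{lea.5}--\ref{lea.6} to reduce $|\Psi|$ to a few dominant starred weights, and then push the result through $\mathcal{T}^{out}$ via Lemma \ref{le5.1}. Most of your slab-by-slab accounting (gains from $R^{-s}$, $R^{\alpha-2s}$, $\lambda_j^s$, and the bubble ratios) is exactly what the paper does.

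There is, however, one genuine gap: your claim that \emph{every} transition slab admits a pointwise bound $|V\Psi|\lesssim t_0^{-l}\,\omega_\sharp$ fails in the slabs $\{\bar{\mu}_{0,i}\le|x|\le 2R^{-1}\mu_{0,i-1}\}$, $i=2,\dots,k$. There $p\,u_*^{p-1}\approx|U_{i-1}|^{p-1}\approx\mu_{0,i-1}^{-2s}$ and the dominant contribution of $\Psi$ is $(\omega_{1,i-1}'')^*=t^{\gamma_{i-1}}$, so the product is $\approx\mu_{0,i-1}^{-2s}t^{\gamma_{i-1}}$, which is \emph{exactly} the size of the weight $\omega_{1,i-1}$ on $\{|x|\le\mu_{0,i-1}\}$ --- no factor $t^{-\eta}$ survives, whatever the choice of $\epsilon,\sigma$. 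The smallness for this piece is not pointwise: it is recovered only after applying the Duhamel operator, because the source is supported in the shrunken ball of radius $2R^{-1}\mu_{0,i-1}\ll\mu_{0,i-1}$. The paper isolates the term $\mu_{0,i-1}^{-2s}t^{\gamma_{i-1}}1_{\{|x|\le 2R^{-1}\mu_{0,i-1}\}}$ and computes $\mathcal{T}^{out}$ of it directly with the Appendix B kernel estimates, obtaining a bound of order $R^{-2s}t^{\gamma_{i-1}}$ on $\{|x|\le 4R^{-1}\mu_{0,i-1}\}$ and $R^{-n}\mu_{0,i-1}^{n-2s}t^{\gamma_{i-1}}|x|^{2s-n}$ outside, which is then $\lesssim t_0^{-l}(\omega_{1,i-1}'')^*$. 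You need to add this separate step (or an equivalent one) for your argument to close; the rest of your plan goes through as stated.
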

\begin{proof}
Without loss of generality, we suppose that $\|\Psi\|_{\alpha,\sigma}^{out,*}\le 1.$ Using $\eqref{eq3.10},$ we have
\begin{equation*}
V=pu_*^{p-1}(1-\sum_{j=1}^k\zeta_j)+\sum_{j=1}^k\zeta_jp(u_*^{p-1}-U_j^{p-1}).
\end{equation*}
For the term $pu_*^{p-1}(1-\sum_{j=1}^k\zeta_j),$ by the definition of $\zeta_j,$ we find that
$$\mathrm{Supp}\{1-\sum_{j=1}^k\zeta_j\}=\cup_{i=2}^k\{R\mu_{0,i}\le|x|\le2R^{-1}\mu_{0,i-1}\}
\cup\{R\mu_{0,1}\le|x|\}.$$
We shall give the estimation in each interval as follows
\medskip

\noindent (1)~\ In the region $\{R\mu_{0,1}\le|x|\},$ using Lemma $\ref{lea.1}$ and $\ref{lea.2},$ we obtain that $$p|u_*|^{p-1}\lesssim\mu_1^{2s}|x|^{-4s}\lesssim|x|^{-4s}\lesssim R^{-s}|x|^{-3s}.$$ We write
\begin{equation*}
\{R\mu_{0,1}\le|x|\}=\left\{R\mu_{0,1}\le|x|\le\bar{\mu}_{0,1}\right\}\cup\left\{\bar{\mu}_{0,1}\le|x|\le t^{\frac{1}{2s}}\right\}\cup\left\{|x|\ge t^{\frac{1}{2s}}\right\}.
\end{equation*}
In the first set, using $\eqref{eqa.20}$ and $\eqref{eqa.21},$ we see that
\begin{equation*}
|\Psi|\lesssim \omega_{1,1}^*+\omega_3^*+(\omega_{1,2}'')^*+\omega_{2,1}^*.
\end{equation*}
Observing that $(\omega_{1,2}'')^*+\omega_{2,1}^*\lesssim\omega_{1,1}^*$ in $\{R\mu_{0,1}\le|x|\le\bar{\mu}_{0,1}\}.$ Hence, we have
\begin{equation*}
\left|pu_*^{p-1}(1-\sum_{j=1}^k\zeta_j)\Psi\right|\lesssim R^{-s}|x|^{-3s}(\omega_{1,1}^*+\omega_3^*)\lesssim t_0^{-l}\omega_{1,1}.
\end{equation*}
In the second set, using $\eqref{eqa.20}$ and $\eqref{eqa.21}$ again, we see that $|\Psi|\lesssim\omega_{1,1}^*+\omega_3^*+(\omega_{1,2}'')^*\lesssim\omega_{1,1}^*+\omega_3^*.$ Then
\begin{equation*}
\left|pu_*^{p-1}(1-\sum_{j=1}^k\zeta_j)\Psi\right|\lesssim R^{-s}|x|^{-3s}(\omega_{1,1}^*+\omega_3^*)\lesssim t_0^{-l}(\omega_{1,1}'+\omega_3).
\end{equation*}
In the third set, we find that
\begin{equation*}
\left|pu_*^{p-1}(1-\sum_{j=1}^k\zeta_j)\Psi\right|\lesssim R^{-s}|x|^{-3s}\omega_3^*\lesssim t_0^{-l}\omega_3.
\end{equation*}

\noindent (2)~\  In the region $\{R\mu_{0,i}\le|x|\le 2R^{-1}\mu_{0,i-1}\},i=2,\cdots,k.$ We split the region $\{R\mu_{0,i}\le|x|\le 2R^{-1}\mu_{0,i-1}\}$ as
$$\{R\mu_{0,i}\le|x|\le 2R^{-1}\mu_{0,i-1}\}=\{R\mu_{0,i}\le|x|\le\bar{\mu}_{0,i}\}\cup\{\bar{\mu}_{0,i}\le|x|\le 2R^{-1}\mu_{0,i-1}\}.$$
In the first set, we get $p|u_*|^{p-1}\lesssim\mu_i^{2s}|x|^{-4s}.$ Using $\eqref{eqa.20}$ and $\eqref{eqa.21},$ we deduce that
\begin{equation*}
|\Psi|\lesssim  \begin{cases}
(\omega_{1,i}'')^*+(\omega_{1,i+1}'')^*+\omega_{2,i-1}^*+\omega_{2,i}^*, \quad &\mathrm{if}~i=2,\cdots,k-1,\\
(\omega_{1,k}'')^*+\omega_{2,k-1}^*,\quad &\mathrm{if}~i=k.
\end{cases}
\end{equation*}
On the other hand, by Lemma $\ref{le5.1},$ we have $$(\omega_{1,i+1}'')^*\lesssim(\omega_{1,i}'')^*~\mathrm{in}~\{R\mu_{0,i}\le|x|\le\bar{\mu}_{0,i}\},\quad i=2,\cdots,k-1.$$
Thus
\begin{equation*}
\begin{split}	\left|pu_*^{p-1}(1-\sum_{j=1}^k\zeta_j)\Psi\right|\lesssim\mu_i^{2s}|x|^{-4s}
((\omega_{1,i}'')^*+\omega_{2,i-1}^*+\omega_{2,i}^*)\lesssim t_0^{-l}(\omega_{1,i}+\omega_{2,i}),
\end{split}
\end{equation*}
where we have used the fact that in $\{R\mu_{0,i}\le|x|\le\bar{\mu}_{0,i}\},$
\begin{equation*}
\begin{split}
&\mu_i^{2s}|x|^{-4s}(\omega_{1,i}'')^*\lesssim R^{\alpha-2s} t^{\gamma_i}\mu_{0,i}^{\alpha}|x|^{-2s-\alpha}\lesssim t_0^{-l}\omega_{1,i},\quad i=2,\cdots,k,\\&
\mu_i^{2s}|x|^{-4s}\omega_{2,i-1}^*\lesssim R^{\alpha-2s}\mu_{0,i}^\alpha t^{\gamma_i}|x|^{-2s-\alpha}\lesssim t_0^{-l}\omega_{1,i},\quad i=2,\cdots,k,\\&
\mu_i^{2s}|x|^{-4s}\omega_{2,i}^*\lesssim R^{-2s}t^{-\sigma}\mu_{0,i+1}^{\frac{n}{2}-2s}\mu_{0,i}^{-s}|x|^{2s-n}\lesssim t_0^{-l}\omega_{2,i},\quad i=2,\cdots,k-1.
\end{split}
\end{equation*}
In the second set, applying Lemma $\ref{lea.1},$ we have $pu_*^{p-1}\lesssim|U_{i-1}|^{p-1}\lesssim\mu_{0,i-1}^{-2s}.$ On the other hand, by Lemma $\ref{le5.1}$ we also get that $\omega_{2,i-2}^*\lesssim(\omega_{1,i-1}'')^*$.   Then
\begin{equation*}
\begin{split}		\left|pu_*^{p-1}(1-\sum_{j=1}^k\zeta_j)\Psi\right|
\lesssim\mu_{0,i-1}^{-2s}((\omega_{1,i-1}'')^*+(\omega_{1,i}'')^*+\omega_{2,i-1}^*)
\lesssim\mu_{0,i-1}^{-2s}t^{\gamma_{i-1}}1_{\{|x|\le 2R^{-1}\mu_{0,i-1}\}}+t_0^{-l}\omega_{2,i-1},
\end{split}
\end{equation*}
where we have used the fact that in $\{\bar{\mu}_{0,i}\le|x|\le 2R^{-1}\mu_{0,i-1}\},$
\begin{equation*}
\begin{split}
&\mu_{0,i-1}^{-2s}(\omega_{1,i}'')^*\lesssim t_0^{-l}t^{-\sigma}\mu_{0,i}^{\frac{n}{2}-2s}\mu_{0,i-1}^{-s}|x|^{2s-n}\lesssim t_0^{-l}\omega_{2,i-1},\quad i=2,\cdots,k,\\&
\mu_{0,i-1}^{-2s}\omega_{2,i-1}^*\lesssim R^{-2s}t^{-\sigma}\mu_{0,i}^{\frac{n}{2}-2s}\mu_{0,i-1}^{-s}|x|^{2s-n}\lesssim t_0^{-l}\omega_{2,i-1},\quad i=2,\cdots,k.
\end{split}
\end{equation*}
Moreover, by Section $B,$ we deduce that
\begin{equation*}
\begin{split}
\mathcal{T}^{out}[\mu_{0,i-1}^{-2s}t^{\gamma_{i-1}}1_{\{|x|\le 2R^{-1}\mu_{0,i-1}\}}]\lesssim&
\begin{cases}
R^{-2s}t^{\gamma_{i-1}}\quad &\text{if $|x|\le 4R^{-1}\mu_{0,i-1}$}\\
R^{-n}\mu_{0,i-1}^{-2s}t^{\gamma_{i-1}}\mu_{0,i-1}^n|x|^{2s-n}\quad &\text{if $|x|\ge4R^{-1}\mu_{0,i-1}$}
\end{cases}\\
\lesssim&~ t_0^{-l}(\omega_{1,i-1}'')^*.
\end{split}
\end{equation*}
\medskip
	
For the term $\sum_{i=1}^k\zeta_ip(u_*^{p-1}-U_i^{p-1}).$ From the definition of $\zeta_i,$ we know that the support of $\zeta_i$ is contained in $\{R^{-1}\mu_{0,i}\le|x|\le 2R\mu_{0,i}\}.$
\smallskip

\noindent (1)~\ For $i=1,$ it follows from the definition of $\chi_j$ that $\varphi_0=0$ in the support of $\zeta_1.$ Then by Lemma $\ref{lea.1},$ we know that
\begin{equation*}	
|\zeta_1(u_*^{p-1}-U_1^{p-1})|\lesssim|U_2|^{p-1}\zeta_1
\lesssim\mu_2^{2s}|x|^{-4s}1_{\{R^{-1}\mu_{0,1}\le|x|\le2R\mu_{0,1}\}}.
\end{equation*}
Therefore we deduce that
\begin{equation*}
\begin{split}		|\zeta_1(u_*^{p-1}-U_1^{p-1})\Psi|\lesssim\mu_2^{2s}|x|^{-4s}
1_{\{R^{-1}\mu_{0,1}\le|x|\le2R\mu_{0,1}\}}(\omega_{1,1}^*+\omega_3^*+\omega_{2,1}^*+(\omega_{1,2}'')^*)
\lesssim t_0^{-l}\omega_{1,1},
\end{split}
\end{equation*}
where we have used the fact that in $\{R^{-1}\mu_{0,1}\le|x|\le2R\mu_{0,1}\},$
\begin{equation*}
\begin{split}
&\mu_2^{2s}|x|^{-4s}\omega_{1,1}^*\lesssim\mu_2^{2s}|x|^{-4s}t^{\gamma_1}\lesssim t_0^{-l}\omega_{1,1},\\&
\mu_2^{2s}|x|^{-4s}\omega_3^*\lesssim\mu_2^{2s}|x|^{-4s}t^{-1-\sigma}\lesssim t_0^{-l}\omega_{1,1}\\		&\mu_2^{2s}|x|^{-4s}\omega_{2,1}^*\lesssim\mu_2^{2s}|x|^{-4s}t^{-\sigma}\bar{\mu}_{0,2}^{n-4s}|x|^{2s-n}
\lesssim t_0^{-l}\omega_{1,1},\\&\mu_2^{2s}|x|^{-4s}(\omega_{1,2}'')^*
\lesssim\mu_2^{2s}|x|^{-4s}\bar{\mu}_{0,2}^{n-2s}t^{\gamma_2}|x|^{2s-n}\lesssim t_0^{-l}\omega_{1,1}.
\end{split}
\end{equation*}
		
\noindent (2)~\ For $i=2,\cdots,k,$ we have $$|\zeta_i(u_*^{p-1}-U_i^{p-1})|\lesssim(|U_{i-1}|^{p-1}+|U_{i+1}|^{p-1}+\varphi_{0,i}^{p-1})|\zeta_i|
\lesssim\mu_{0,i-1}^{-2s}|\zeta_i|.$$ Then
\begin{equation*}
\begin{split}
|\zeta_i(u_*^{p-1}-U_i^{p-1})\Psi|&\lesssim
\begin{cases}	\mu_{0,i-1}^{-2s}((\omega_{1,i}'')^*+(\omega_{1,i+1}'')^*+\omega_{2,i-1}^*+\omega_{2,i}^*)|\zeta_i|,\quad &i=2,\cdots,k-1,\\
\mu_{0,k-1}^{-2s}((\omega_{1,k}'')^*+\omega_{2,k-1}^*), &i=k,
\end{cases}
\\&\lesssim\begin{cases}
t_0^{-l}(\omega_{1,i}+\omega_{2,i}),\quad &i=2,\cdots,k-1,\\
t_0^{-l}(\omega_{1,k}+\omega_{2,k-1}), &i=k,
\end{cases}
\end{split}
\end{equation*}	
where we have used the fact that $\omega_{2,i-1}^*\lesssim\omega_{1,i}^*,~i=2,\cdots,k$ and
\begin{equation*}
\begin{split}
&\mu_{i-1}^{-2s}(\omega_{1,i}'')^*\lesssim\mu_{i-1}^{-2s}t^{\gamma_i}\lesssim t_0^{-l}\omega_{1,i},\quad i=2,\cdots,k,\\&		\mu_{j-1}^{-2s}(\omega_{1,j+1}'')^*\lesssim\mu_{j-1}^{-2s}\bar{\mu}_{0,j+1}^{n-2s}t^{\gamma_{j+1}}|x|^{2s-n}
\lesssim t_0^{-l}\omega_{2,j}\quad i=2,\cdots,k-1,\\& \mu_{j-1}^{-2s}\omega_{2,j}^*\lesssim\mu_{j-1}^{-2s}t^{-\sigma}\mu_{0,j+1}^{\frac{n}{2}-2s}\mu_{0,j}^{-s}
|x|^{4s-n}\lesssim t_0^{-l}\omega_{2,j},\quad i=2,\cdots,k-1.
\end{split}
\end{equation*}	
Combining the above computations we finish the proof.
\end{proof}

\begin{lemma}\label{lea.8}
There exists $t_0>0$ large enough such that
\begin{equation*}
\|N[\vec{\phi},\Psi,\vec{\mu}_1]\|_{\alpha,\sigma}^{out}\lesssim t_0^{-l}(\|\vec{\phi}\|_{a,\sigma}^{in}+\|\Psi\|_{\alpha,\sigma}^{out,*})^p.
\end{equation*}
\end{lemma}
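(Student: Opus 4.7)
The key structural fact is that $p=(n+2s)/(n-2s)\in(1,2)$ whenever $n>6s$, so the pure quadratic-type estimate on the remainder gives
$$|N[\vec\phi,\Psi,\vec\mu_1]|=|f(u_*+\varphi)-f'(u_*)\varphi-f(u_*)|\lesssim|\varphi|^p,\qquad \varphi=\sum_{j=1}^k\varphi_j\eta_j+\Psi,$$
and the subadditivity $|a+b|^p\lesssim|a|^p+|b|^p$ reduces the lemma to bounding $\sum_j|\varphi_j\eta_j|^p$ and $|\Psi|^p$ separately by the outer weights $\omega_{1,j},\omega_{1,j}',\omega_{1,j}'',\omega_{2,j},\omega_3$ with a prefactor $t_0^{-l}$. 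The smallness is extracted from the fact that raising any of the outer or inner envelopes to the $p$-th power produces the factor $(\text{envelope})^{p-1}$, and this factor carries a decaying power of $t$ together with at most a logarithmically growing power of $R=t^\epsilon$.

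For the inner piece, the definition of $\|\cdot\|_{j,a,\sigma}^{in}$ together with $\mu_j\approx\mu_{0,j}=\beta_jt^{-\alpha_j}$ gives
$$|\varphi_j\eta_j|\lesssim R^{a-2s}\,t^{\gamma_j}\,\langle y_j\rangle^{-a}\,\eta_j\,\|\phi_j\|_{j,a,\sigma}^{in},$$
hence $|\varphi_j\eta_j|^p$ is supported in $\{|x|\le4R\mu_{0,j}\}$ and decays like $\langle y_j\rangle^{-ap}$, with $a=2s+\alpha$ and $\alpha\in(0,s)$. In $\{|x|\le\mu_{0,j}\}$ one compares with $\omega_{1,j}\approx\mu_{0,j}^{-2s}t^{\gamma_j}$, and in $\{\mu_{0,j}\le|x|\le4R\mu_{0,j}\}$ with $\omega_{1,j}\approx\mu_{0,j}^\alpha t^{\gamma_j}|x|^{-2s-\alpha}$; in each case the ratio is bounded by $R^{(a-2s)p+O(1)}t^{(p-1)\gamma_j}\mu_{0,j}^{2s}$, which for $\epsilon,\sigma$ small and $t_0$ large reduces to $t_0^{-l}$ because $(p-1)\gamma_j<0$ in a dominant sense (for $j=1$, $\gamma_1=-1-\sigma$; for $j\ge2$ the gain $\mu_{0,j}^{2s}=o(1)$ provides the required smallness).

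For $|\Psi|^p$, I would use the decompositions \eqref{eqa.20}--\eqref{eqa.21} to write $|\Psi|\le\|\Psi\|_{\alpha,\sigma}^{out,*}\,\Omega^*$ where $\Omega^*$ is the regionwise dominant envelope (namely $(\omega_{1,k}'')^*$ near the origin, $(\omega_{1,i}'')^*+(\omega_{1,i+1}'')^*$ in the dyadic annuli, $\omega_{1,1}^*+\omega_3^*+(\omega_{1,2}'')^*$ in the intermediate range, and $\omega_3^*$ in the far field). In each region one checks that $(\Omega^*)^p$ is dominated, with a small factor, by the \emph{unstarred} weight appearing on the same region in the definition of $\|\cdot\|_{\alpha,\sigma}^{out}$: for instance in $\{|x|\le\bar\mu_{0,k}\}$, $(\omega_{1,k}'')^{p}=t^{p\gamma_k}\le t^{-l}\,t^{-\sigma}=t^{-l}\omega_{1,k}''$; in the annulus $\{\bar\mu_{0,i+1}\le|x|\le\bar\mu_{0,i}\}$ the $p$-th power gains $\bar\mu_{0,i}^{(n-2s)(p-1)}t^{(p-1)\gamma_i}|x|^{-(n-2s)(p-1)}$, which matches the scaling of $\omega_{2,i-1}$ up to a small factor; finally in $\{|x|\ge t^{1/(2s)}\}$ the factor $(\omega_3^*)^{p-1}\lesssim t^{(p-1)(\delta(n-4s)-\sigma)}|x|^{-(n-2s)(p-1)}$ decays since $\delta(n-4s)<\sigma$.

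The hard part, and the real content, is the case analysis required in the paragraph above: keeping track of which envelope dominates in which region and verifying the gain $t_0^{-l}$ case by case when one $p$-th power is being compared with a different single weight. Once these verifications are assembled, summing over $j$ and over the finitely many regions yields
$$|N[\vec\phi,\Psi,\vec\mu_1]|\lesssim t_0^{-l}\!\left(\sum_{j=1}^k(\omega_{1,j}+\omega_{1,j}')+\sum_{j=2}^k\omega_{1,j}''+\sum_{j=1}^{k-1}\omega_{2,j}+\omega_3\right)\!\!\left(\|\vec\phi\|_{a,\sigma}^{in}+\|\Psi\|_{\alpha,\sigma}^{out,*}\right)^p,$$
which is precisely the claim.
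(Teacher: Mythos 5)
Your proposal follows essentially the same route as the paper's proof: the reduction $|N|\lesssim\sum_j\mu_j^{-\frac{n+2s}{2}}|\phi_j|^p\eta_j+|\Psi|^p$ via $p\in(1,2)$, the bound \eqref{eqa.24} for the inner piece, and then a regionwise comparison of the $p$-th powers of the starred envelopes from \eqref{eqa.20}--\eqref{eqa.21} against the unstarred weights, which is exactly the paper's case analysis (1)--(5). One of your illustrative checks is off, though: in $\{|x|\le\bar\mu_{0,k}\}$ the unstarred weight $\omega_{1,k}''$ vanishes (it is supported on $\{|x|\ge\bar\mu_{0,k}\}$), and the inequality $t^{p\gamma_k}\le t^{-l}t^{-\sigma}$ fails for $k\ge3$ since $\gamma_k=\frac{n-2s}{2}\alpha_{k-1}-\sigma>0$ there; the correct target is $\omega_{1,k}\approx\mu_{0,k}^{-2s}t^{\gamma_k}$, for which the gain is $t^{(p-1)\gamma_k}\mu_{0,k}^{2s}\lesssim t^{2s(\alpha_{k-1}-\alpha_k)}\to0$, which is precisely the computation $((\omega_{1,k}'')^*)^p\lesssim t^{\gamma_kp}\mu_{0,k}^{2s}t^{-\gamma_k}\omega_{1,k}\lesssim t_0^{-l}\omega_{1,k}$ carried out in the paper.
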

\begin{proof}
By $\eqref{eq3.9}$ and $p\in(1,2),$ we have
\begin{equation*}
|N[\vec{\phi},\Psi,\vec{\mu}_1]|\lesssim\sum_{j=1}^k\mu_j^{-\frac{n+2s}{2}}|\phi_j|^p\eta_j+|\Psi|^p.
\end{equation*}	
Consider the term $\mu_j^{-\frac{n+2s}{2}}|\phi_j|^p\eta_j,$ using  the norm definition of $\phi_j,$ we deduce that
\begin{equation}\label{eqa.24}
\begin{split}		\mu_j^{-\frac{n+2s}{2}}|\phi_j|^p\eta_j\lesssim\mu_j^{-\frac{n+2s}{2}}
\lambda_{0,j}^{\frac{n+2s}{2}}t^{-p\sigma}R^{(a-2s)p}\langle y_j\rangle^{-pa}(\|\phi_j\|_{j,a,\sigma}^{in})^p1_{\{|x|\le 4R\mu_{0,j}\}}\lesssim t_0^{-l}\omega_{1,j}(\|\phi_j\|_{j,a,\sigma}^{in})^p.
\end{split}
\end{equation}
For $|\Psi|^p,$ we divide $\mathbb{R}^n$ into $k+2$ parts and provide the estimation on it in these regions
\smallskip
	
\noindent (1)~\ In $\{|x|\ge\bar{\mu}_{0,1}\},$ by $\eqref{eqa.20}$ and $\eqref{eqa.21},$ we deduce that
\begin{equation*}
|\Psi|\lesssim\begin{cases}
\omega_3^*\|\Psi\|_{\alpha,\sigma}^{out,*}, &	
\mbox{if}~\	t^{\frac{1}{2s}}\le |x|,\\
(\omega_{1,1}^*+\omega_3^*+(\omega_{1,2}'')^*)\|\Psi\|_{\alpha,\sigma}^{out,*},\quad &\mbox{if}~\ \bar{\mu}_{0,1}\le |x|\le t^{\frac{1}{2s}}.
\end{cases}
\end{equation*}
If $|x|>t^{\frac{1}{2s}},$
\begin{equation*}
\begin{aligned}
(\omega_3^*)^p=(t^{\delta(n-4s)})^pt^{-p\sigma}|x|^{(2s-n)p}\lesssim (t^{\delta(n-4s)})^{p-1}t^{1+\sigma-p\sigma}|x|^{(2s-n)(p-1)}\omega_3\lesssim t_0^{-l}\omega_3.
\end{aligned}
\end{equation*}
While if $\bar{\mu}_{0,1}\le|x|\le t^{\frac{1}{2s}},$
\begin{equation*}
\begin{split}
(\omega_3^*)^p=(t^{\delta(n-4s)})^pt^{-p-p\sigma}|x|^{4sp-pn}\lesssim (t^{\delta(n-4s)})^{p-1}t^{-(p-1)(1+\sigma)}|x|^{-2s\frac{n-6s}{n-2s}}\omega_3\lesssim t_0^{-l}\omega_3,
\end{split}
\end{equation*}	
\begin{equation*}
\begin{aligned}
(\omega_{1,1}^*)^p\lesssim t^{p\gamma_1+p\delta(n-2s-\alpha)}|x|^{p(2s-n)}\lesssim t^{-\delta(n-4s)}|x|^{-4s}t^{(p-1)\gamma_1+\delta p(n-2s-\alpha)}\omega_3\lesssim t_0^{-l}\omega_3,
\end{aligned}
\end{equation*}
\begin{equation*}
\begin{aligned}
((\omega_{1,2}'')^*)^p\lesssim t^{-\delta(n-4s)}\bar{\mu}_2^{np-2sp}|x|^{(2s-n)(p-1)}t^{1+\sigma}\omega_3\lesssim t_0^{-l}\omega_3.
\end{aligned}
\end{equation*}
As a consequence, $|\Psi|^p\lesssim t_0^{-l}(\|\Psi\|_{\alpha,\sigma}^{out,*})^p\omega_3.$
\smallskip
		
\noindent (2)~\	In $\{1\le|x|\le\bar{\mu}_{0,1}\},$ by $\eqref{eqa.20}$ and $\eqref{eqa.21} $ we have $$|\Psi|\lesssim(\omega_{1,1}^*+\omega_{2,1}^*+\omega_3^*+(\omega_{1,2}'')^*)
\|\Psi\|_{\alpha,\sigma}^{out,*}.$$
In addition, we have
$$\omega_{2,1}^*\lesssim\omega_{1,1}^*\quad \mbox{in}\quad \{1\le|x|\le\bar{\mu}_{0,1}\}.$$
Thus
\begin{equation*}
\begin{split}	|\Psi|^p&\lesssim((\omega_{1,1}^*)^p+((\omega_{1,2}'')^*)^p
+(\omega_3^*)^p)(\|\Psi\|_{\alpha,\sigma}^{out,*})^p\\&\lesssim t^{-(1+\sigma)(p-1)}|x|^{-(p-1)\alpha+2s}\omega_{1,1}(\|\Psi\|_{\alpha,\sigma}^{out,*})^p
+\bar{\mu}_{0,2}^{np-2sp}|x|^{(2s-n)p}t^{\gamma_2p-\gamma_1}|x|^{2s+\alpha}\omega_{1,1}
(\|\Psi\|_{\alpha,\sigma}^{out,*})^p\\&\quad+t^{-(1+\sigma)(p-1)}|x|^{2s+\alpha}\omega_{1,1}
(\|\Psi\|_{\alpha,\sigma}^{out,*})^p\\&\lesssim t_0^{-l}\omega_{1,1}(\|\Psi\|_{\alpha,\sigma}^{out,*})^p.
\end{split}
\end{equation*}
\smallskip
		
\noindent (3)~\ In $\{\bar{\mu}_{0,2}\le|x|\le 1\},$ we get from $\eqref{eqa.20}$ and $\eqref{eqa.21} $ that $$|\Psi|\lesssim(\omega_{1,1}^*+\omega_3^*+\omega_{2,1}^*+(\omega_{1,2}'')^*)
\|\Psi\|_{\alpha,\sigma}^{out,*}.$$Then
\begin{equation*}
\begin{split}
&(\omega_{1,1}^*)^p\lesssim t^{\gamma_1p}\lesssim t_0^{-l}\omega_{1,1},\quad
(\omega_3^*)^p\lesssim t^{-(1+\sigma)(p-1)}t^{\gamma_1}\lesssim t_0^{-l}\omega_{1,1}\\&
(\omega_{2,1}^*)^p\lesssim t^{-\sigma(p-1)}|x|^{(4s-n)(p-1)+2s}\bar{\mu}_{0,2}^{p(n-4s)}\mu_{0,2}^{-\frac{n}{2}+2s}\omega_{2,1}\lesssim t_0^{-l}\omega_{2,1},\\&
((\omega_{1,2}'')^*)^p\lesssim\bar{\mu}_{0,2}^{np-n-2s}\bar{\mu}_{0,2}^{2s(1-p)}
|x|^{(2s-n)p+n+2s}t^{-(p-1)\sigma}\omega_{1,2}''.
\end{split}
\end{equation*}
Hence
\begin{equation*}
\begin{aligned}				
|\Psi|^p\lesssim((\omega_{1,1}^*)^p+(\omega_3^*)^p
+(\omega_{2,1}^*)^p+((\omega_{1,2}'')^*)^p)(\|\Psi\|_{\alpha,\sigma}^{out,*})^p\lesssim t_0^{-l}(\omega_{1,1}+\omega_{2,1}+\omega_{1,2}'')(\|\Psi\|_{\alpha,\sigma}^{out,*})^p.
\end{aligned}
\end{equation*}
		
\noindent (4)~\ In $\{\bar{\mu}_{0,i+1}\le|x|\le\bar{\mu}_{0,i}\},i=2,\cdots,k-1,$ we get
\begin{equation*}	
|\Psi|\lesssim((\omega_{1,i}'')^*+(\omega_{1,i+1}'')^*+\omega_{2,i-1}^*+\omega_{2,i}^*)
\|\Psi\|_{\alpha,\sigma}^{out,*}.
\end{equation*}
Then
\begin{equation*}
((\omega_{1,i}'')^*)^p\lesssim t^{\gamma_ip}\mu_{0,i}^{2s}t^{-\gamma_i}\omega_{1,i}\lesssim t_0^{-l}\omega_{1,i},
\end{equation*}
\begin{equation*}	((\omega_{1,i+1}'')^*)^p\lesssim\bar{\mu}_{i+1}^{np-2sp}
t^{\gamma_{i+1}p}|x|^{(2s-n)p}\bar{\mu}_{i+1}^{-n}t^{-\gamma_{i+1}}|x|^{n+2s}\omega_{1,i+1}''\lesssim t_0^{-l}\omega_{1,i+1}'',
\end{equation*}
\begin{equation*}
(\omega_{2,i-1}^*)^p\lesssim t^{-p\sigma}\mu_{0,i-1}^{(s-\frac{n}{2})p}\lesssim t^{-p\sigma}\mu_{0,i-1}^{(s-\frac{n}{2})p}\mu_{0,i}^{-\alpha}t^{-\gamma_i}|x|^{2s+\alpha}\omega_{1,i}\lesssim t_0^{-l}\omega_{1,i},
\end{equation*}
\begin{equation*}
(\omega_{2,i}^*)^p\lesssim t^{-p\sigma}\mu_{0,i+1}^{(\frac{n}{2}-2s)p}\mu_{0,j}^{-sp}|x|^{(4s-n)p}t^{\sigma}
\mu_{0,i+1}^{2s-\frac{n}{2}}\mu_{0,i}^s|x|^{n-2s}\omega_{2,i}\lesssim t_0^{-l}\omega_{2,i}.
\end{equation*}
Thus,   we obtain that
\begin{equation*}
\begin{aligned}
|\Psi|^p \lesssim((\omega_{1,i}'')^*+(\omega_{1,i+1}'')^*+\omega_{2,i-1}^*
+\omega_{2,i}^*)^p(\|\Psi\|_{\alpha,\sigma}^{out,*})^p \lesssim t_0^{-l}(\omega_{1,i}+\omega_{2,i}+\omega_{1,i+1}'')(\|\Psi\|_{\alpha,\sigma}^{out,*})^p.
\end{aligned}
\end{equation*}		

\noindent (5) ~\ In $\{0\le|x|\le \bar\mu_{0,k}\},$ we get
\begin{equation*}
|\Psi|\lesssim ((w_{1,k}'')^*+w_{2,k-1}^*)\|\Psi\|_{\alpha,\sigma}^{out,*}.
\end{equation*}
Then
\begin{equation*}
((\omega_{1,k}'')^*)^p\lesssim t^{\gamma_kp}\mu_{0,k}^{2s}t^{-\gamma_k}\omega_{1,k}\lesssim t_0^{-l}\omega_{1,k},
\end{equation*}
\begin{equation*}
(\omega_{2,k-1}^*)^p\lesssim t^{-p\sigma}\mu_{0,k-1}^{(s-\frac{n}{2})p}\lesssim t^{-p\sigma}\mu_{0,k-1}^{(s-\frac{n}{2})p}\mu_{0,k}^{-\alpha}t^{-\gamma_k}|x|^{2s+\alpha}\omega_{1,k}\lesssim t_0^{-l}\omega_{1,k}.
\end{equation*}
Thus,
\begin{equation*}
|\Psi|^p \lesssim((\omega_{1,k}'')^*+\omega_{2,k-1}^*)^p(\|\Psi\|_{\alpha,\sigma}^{out,*})^p \lesssim t_0^{-l}\omega_{1,k}(\|\Psi\|_{\alpha,\sigma}^{out,*})^p.
\end{equation*}

Combining $\eqref{eqa.24}$ and $(1)-(5),$ we complete the proof of Lemma $\ref{lea.8}.$
\end{proof}

\section{Appendix: some estimates for outer problem}
In this section we shall present several useful estimates for the fractional heat operator $\partial_t+(-\Delta)^s$. Recall that the fractional heat kernel is given by
\begin{equation*}
K_s(x,t)=\frac{t}{(t^{\frac{1}{s}}+|x|^2)^{\frac{n+2s}{2}}}.
\end{equation*}
Then for
\begin{equation*}
\mathcal{T}^{out}[f](x,t):=\int_{t}^{\infty}\int_{\mathbb{R}^n}K_s(x-z,l-t)f(z,l)\,dzdl,
\end{equation*}
we have  the following lemmas.
\begin{lemma}\label{leb.1}
Suppose that $n>6s,$ $a\in\{n+2s,n+s,n-2s,2s+\alpha,0\},$ $0\le c_1,c_2\le c_{**},$ $d_1\le d_2\le\frac{1}{2s}$ and $b$ satisfies
\begin{equation}\label{eqb.1}
\begin{cases}
\frac{n}{2s}-b+d_2(a-n)>1&\text{if $a\in\{0,n-2s,2s+\alpha\},$}\\
\frac{n}{2s}-b+d_1(a-n)>1&\text{if $a\in\{n+s,n+2s\},$}
\end{cases}
\end{equation}
Then there exists $C$ depending on $n,a,b,d_1,d_2,c_{**}$ such that for $t>1,$
\begin{equation*}
u(x,t):=\mathcal{T}^{out}\left[\frac{t^b}{|x|^a}1_{\{c_1t^{d_1}\le|x|\le c_2t^{d_2}\}}\right](x,t)\le C
\begin{cases}
t^{b+d_1(2s-a)}&\text{if $a\in\{n+2s,n+s,n-2s,2s+\alpha\},$}\\
t^{b+d_2(2s-a)}&\text{if $a=0.$}\\
\end{cases}
\end{equation*}
Moreover, in the case of
$$a\in\{n-2s,2s+\alpha,0\},\quad b+d_2(n-a)<0,$$
we have
\begin{equation}
\label{eqb.2}
u(x,t)\lesssim t^{b+d_2(n-a)}|x|^{2s-n}\quad\mbox{for}\quad
\begin{cases}
|x|>2c_2|2t|^{d_2}&\text{if\quad $0<d_2\le\frac{1}{2s},$}\\
|x|>2c_2|t|^{d_2}&\text{if\quad $d_2<0.$}\\
\end{cases}
\end{equation}
While in the case of
$$a\in\{n+2s,n+s\}, \quad b<0,\quad b+d_1(n-a)<0,$$
we get
\begin{equation}
\label{eqb.3}
u(x,t)\lesssim t^{b+d_1(n-a)}|x|^{2s-n}	
\quad\mathrm{for}\quad
\begin{cases}
|x|>2c_1|2t|^{d_1}&\text{if\quad $0<d_1\le\frac{1}{2s},$}\\
|x|>2c_1|t|^{d_1}&\text{if\quad $d_1<0.$}\\
\end{cases}
\end{equation}
\end{lemma}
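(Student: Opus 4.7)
The plan is to prove both parts of Lemma~\ref{leb.1} by first changing variables $\tau = l - t$, rewriting
\begin{equation*}
u(x,t) = \int_0^{\infty} \int_{\mathbb{R}^n} K_s(x-z,\tau)\,\frac{(t+\tau)^b}{|z|^a}\,\mathbf{1}_{\{c_1(t+\tau)^{d_1} \le |z| \le c_2(t+\tau)^{d_2}\}}\,dz\,d\tau,
\end{equation*}
and then systematically partitioning the $(\tau,z)$-domain according to the relative sizes of $\tau^{1/(2s)}$, $|x|$, and the annulus radii. The basic tools are the two pointwise bounds
\begin{equation*}
K_s(x-z,\tau) \lesssim \tau^{-n/(2s)}, \qquad K_s(x-z,\tau) \lesssim \frac{\tau}{|x-z|^{n+2s}},
\end{equation*}
together with the conservation identity $\int_{\mathbb{R}^n} K_s(y,\tau)\,dy = C_{n,s}$ independent of $\tau$.

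For the global bound I would split the $\tau$-integral at $\tau = t$, using $t+\tau \sim t$ for $\tau \le t$ and $t+\tau \sim \tau$ for $\tau \ge t$, and within each piece distinguish further according to whether $\tau^{1/(2s)}$ lies below, inside, or above the annulus at time $l = t+\tau$. When $\tau^{1/(2s)}$ is smaller than the inner radius $c_1 l^{d_1}$, the kernel is essentially concentrated near $x$ and one keeps $K_s \lesssim \tau |x-z|^{-(n+2s)}$; when $\tau^{1/(2s)}$ exceeds the outer radius $c_2 l^{d_2}$ the kernel is spread over the whole annulus and the $L^\infty$-bound $K_s \lesssim \tau^{-n/(2s)}$ is sharp; in the intermediate regime the $L^1$-identity combined with the supremum of $|z|^{-a}$ on the annulus suffices. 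In every sub-case the leftover $\tau$-integral takes the form $\int \tau^{-\gamma}\,d\tau$ with $\gamma > 1$ under hypothesis~\eqref{eqb.1}, and the dominant contribution is governed by whichever boundary of the annulus maximizes $|z|^{-a}$: the inner radius $c_1 l^{d_1}$ when $a > n$, producing $t^{b+d_1(2s-a)}$, and the outer radius $c_2 l^{d_2}$ when $a \le n$, producing $t^{b + d_2(2s-a)}$.

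For the far-field bounds \eqref{eqb.2}--\eqref{eqb.3}, the starting observation is that under the assumption $|x| > 2c_2|2t|^{d_2}$ (or the analogous condition involving $d_1$), one has $|x| \ge 2c_2 l^{d_2}$ uniformly for $l \in [t,2t]$, so $|x-z| \gtrsim |x|$ on the support of the integrand. For these $l$ the pointwise bound $K_s(x-z,\tau) \lesssim \tau|x|^{-(n+2s)}$ lets us factor out $|x|^{-(n+2s)}$ and compute the spatial integral directly, using $\int_{c_1 l^{d_1} \le |z| \le c_2 l^{d_2}} |z|^{-a}\,dz \lesssim l^{d_2(n-a)}$ when $a < n$ and $\lesssim l^{d_1(n-a)}$ when $a > n$. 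The $\tau$-integration then runs effectively only up to the critical time $l^* = O(|x|^{1/d_2})$ at which the annulus first catches up to $|x|$; beyond $l^*$ one reverts to the global bound just proved, and comparing exponents under the sign condition $b + d_2(n-a) < 0$ (or $b + d_1(n-a) < 0$) shows that this tail is absorbed into the main term $t^{b+d_j(n-a)}|x|^{2s-n}$.

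The main obstacle is the bookkeeping: the number of sub-cases is large, and several borderline situations --- notably $a = n-2s$, where the weight is integrable but marginal, and the endpoint equality cases in \eqref{eqb.1} --- require one to combine the two kernel bounds rather than use either in isolation, carefully cutting the $\tau$-integral at $\tau \sim |x|^{2s}$ to avoid logarithmic losses. Verifying that each sub-region yields a contribution at most the claimed right-hand side is the bulk of the work; the conceptual ingredients are just the two kernel estimates and the systematic partition described above.
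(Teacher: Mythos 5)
Your overall framework --- change variables to $\tau=l-t$, use the two pointwise kernel bounds together with the $L^1$ conservation, and partition the time integral according to whether the diffusion length $\tau^{1/(2s)}$ sits below, inside, or above the annulus --- is sound, and your treatment of the far-field bounds \eqref{eqb.2}--\eqref{eqb.3} matches the paper's in substance. However, there is a concrete error in the rule you use to read off the global bound. You claim the answer is governed by the outer radius, giving $t^{b+d_2(2s-a)}$, whenever $a\le n$. For $a\in\{n-2s,\,2s+\alpha\}$ one has $2s<a<n$, and the lemma asserts the exponent $d_1$, not $d_2$; moreover the $d_1$-exponent is genuinely attained, so your stronger bound is false. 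Indeed, take $x=0$, $c_1>0$ and $a=n-2s$: for $\tau=l-t\le c_1^2t^{2d_1s}$ the kernel tail gives
\begin{equation*}
\int_{\{c_1l^{d_1}\le|z|\le c_2l^{d_2}\}}K_s(z,\tau)\,\frac{dz}{|z|^{a}}\approx \tau\int_{c_1l^{d_1}}^{c_2l^{d_2}}r^{-2s-a-1}\,dr\approx \tau\,\bigl(c_1t^{d_1}\bigr)^{-2s-a},
\end{equation*}
and integrating in $\tau$ up to $c_1^2t^{2d_1s}$ yields a contribution of size $t^{b+d_1(2s-a)}$, which is strictly larger than $t^{b+d_2(2s-a)}$ when $d_1<d_2$, since $2s-a<0$. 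The correct dichotomy is $a>2s$ versus $a<2s$: the time integral $\int_0\tau^{-a/(2s)}\,d\tau$ diverges at $\tau=0$ exactly when $a\ge 2s$, so for every $a\in\{n+2s,n+s,n-2s,2s+\alpha\}$ the inner scale (hence $d_1$) controls the answer, and only for $a=0$ does the upper limit of integration (hence $d_2$) take over. As written, your case analysis would not reproduce the statement of the lemma for $a\in\{n-2s,2s+\alpha\}$, and these are precisely the values used in the outer weights $\omega_{1,j}$, $\omega_{2,j}$.

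A secondary, structural difference: the paper never carries out the general-$x$ case analysis you propose for the global bound. It computes $u(0,t)$ by a one-dimensional radial reduction (introducing $F(A,B)=\int_A^B y^{(n-a-2)/2}(1+y)^{-(n+2s)/2}dy$ and splitting $[t,\infty)$ at $t+c_1^2t^{2d_1s}$, $t+c_2^2t^{2d_2s}$ and $2t+c_2^2t^{2d_2s}$), and then invokes the Hardy--Littlewood rearrangement inequality: both $K_s(\cdot,\tau)$ and the source are radially decreasing, so $u(x,t)\le u(0,t)$. Your direct approach can in principle be made to work, but it forces you to track the position of $x$ relative to the annulus in every sub-case; if you keep it, you should at least borrow the rearrangement step, which collapses the global bound to the $x=0$ computation and removes most of the bookkeeping you are worried about.
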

\begin{proof}
When $x=0,$
\begin{equation*}
\begin{split}
\mathcal{T}^{out}\left[\frac{t^b}{|x|^a}1_{\{c_1t^{d_1}\le|x|\le c_2t^{d_2}\}}\right](0,t)&=\int_{t}^{\infty}
\int_{\mathbb{R}^n}\frac{l-t}{((l-t)^{\frac{1}{s}}+|z|^2)^{\frac{n+2s}{2}}}\frac{l^b}{|z|^a}1_{\{c_1l^{d_1}
\le|z|\le c_2l^{d_2}\}}\,dzdl\\&=\int_{t}^{\infty}\int_{\mathbb{R}^n}\frac{1}{(l-t)^{\frac{n}{2s}}}
\cdot\left(\frac{1}{1+\frac{|z|^2}{(l-t)^{\frac{1}{s}}}}\right)^{\frac{n+2s}{2}}
\frac{l^b}{|z|^a}1_{\{c_1l^{d_1}\le|z|\le c_2l^{d_2}\}}\,dzdl\\&=\int_{t}^{\infty}\int_{c_1 l^{d_1}}^{c_2 l^{d_2}}\frac{l^b}{(l-t)^{\frac{n}{2s}}}
\left(\frac{1}{1+\frac{r^2}{(l-t)^{\frac{1}{s}}}}\right)^{\frac{n+2s}{2}}r^{n-1-a}\,drdl\\
&=\int_{t}^{\infty}\frac{l^b}{(l-t)^{\frac{a}{2s}}}F\left(\frac{c_1^2 l^{2d_1}}{(l-t)^{\frac{1}{s}}},\frac{c_2^2 l^{2d_2}}{(l-t)^{\frac{1}{s}}}\right)\,dl,
\end{split}
\end{equation*}
where
\begin{equation*}
F(A,B):=\int_{A}^{B}\frac{1}{(1+y)^{\frac{n+2s}{2}}}y^{\frac{n-a-2}{2}}\,dy.
\end{equation*}
Next, we divide
$$[t,\infty)=[t,t+c_1^2t^{2d_1s}]\cup[t+c_1^2t^{2d_1s},t+c_2^2 t^{2d_2s}]\cup[t+c_2^2 t^{2d_2s},2t+c_2^2t^{2d_2s}]\cup[2t+c_2^2t^{2d_2s},+\infty).$$
In the following, we shall give the estimation for $u(0,t)$ by analyzing the integrals case by case.
	\medskip
	
\noindent (1) In the region $l\in[t,t+c_1^2t^{2d_1s}],$ we have
\begin{equation*}
\begin{split}
I_1:&=\int_{t}^{t+c_1^2t^{2d_1s}}\frac{l^b}{(l-t)^{\frac{a}{2s}}}F\left(\frac{c_1^2 l^{2d_1}}{(l-t)^{\frac{1}{s}}},\frac{c_2^2 l^{2d_2}}{(l-t)^{\frac{1}{s}}}\right)\,dl\\
&\lesssim t^b\int_{t}^{t+c_1^2t^{2d_1s}}\frac{1}{(l-t)^{\frac{a}{2s}}}
F\left(\frac{c_1^2t^{2d_1}(1+c_{**}^2)^{2(d_1)^{-}}}{(l-t)^{\frac{1}{s}}},
\frac{c_2^2t^{2d_2}(1+c_{**}^2)^{2(d_2)^{+}}}{(l-t)^{\frac{1}{s}}}\right)\,dl,
\end{split}
\end{equation*}
where $d^{-}:=\min\{0,d\},d^{+}:=\max\{0,d\}.$ If $a=0,$ then
\begin{equation*}
I_1\lesssim t^b\cdot(c_1^2t^{2d_1s})\lesssim t^{b+2d_1s}.
\end{equation*}
While if $a\in\{n+2s,n+s,n-2s,2s+\alpha\},$ then
\begin{equation*}
\begin{split}
I_1\lesssim t^b\int_{t}^{t+c_1^2 t^{2d_1s}}\frac{1}{(l-t)^{\frac{a}{2s}}}
\frac{1}{\left(1+\frac{c_1^2t^{2d_1}(1+c_{**}^2)^{2(d_1)^{-}}}{(l-t)^{\frac{1}{s}}}\right)^{2s}}\,dl\lesssim t^{b+d_1(2s-a)}.
\end{split}
\end{equation*}

\noindent	$(2)$ In the region $l\in[t+c_1^2t^{2d_1s},t+c_2^2 t^{2d_2s}],$
\begin{equation*}
\begin{split}
I_2:&=\int_{t+c_1^2t^{2d_1s}}^{t+c_2^2 t^{2d_2s}}\frac{l^b}{(l-t)^{\frac{a}{2s}}}F\left(\frac{c_1^2 l^{2d_1}}{(l-t)^{\frac{1}{s}}},\frac{c_2^2 l^{2d_2}}{(l-t)^{\frac{1}{s}}}\right)\,dl\\&\lesssim\int_{t+c_1^2t^{2d_1s}}^{t+c_2^2 t^{2d_2s}}\frac{l^b}{(l-t)^{\frac{a}{2s}}}F\left(\frac{c_1^2 t^{2d_1}(1+c_{**}^2)^{2(d_1)^{-}}}{(l-t)^{\frac{1}{s}}},\frac{c_2^2 t^{2d_2}(1+c_{**}^2)^{2(d_2)^{+}}}{(l-t)^{\frac{1}{s}}}\right)\,dl\\
&\lesssim\begin{cases}
\int_{t+c_1^2t^{2d_1s}}^{t+c_2^2 t^{2d_2s}}\frac{t^b}{(l-t)^{\frac{a}{2s}}}dl\quad &\mathrm{if}~ a\in\{n-2s,2s+\alpha,0\},\\
\int_{t+c_1^2t^{2d_1s}}^{t+c_2^2 t^{2d_2s}}\frac{t^b}{(l-t)^{\frac{a}{2s}}}\left(\frac{t^{2d_1}}{(l-t)^{\frac{1}{s}}}\right)^{\frac{n-a}{2}}dl
\quad &\mathrm{if}~ a\in\{n+2s,n+s\},\\
\end{cases}
\\&\lesssim
\begin{cases}
t^{b+d_1(2s-a)}&\text{if~\  $a\in\{n+2s,n+s,n-2s,2s+\alpha\},$}\\
t^{b+d_2(2s-a)}&\text{if~\  $a=0.$}
\end{cases}
\end{split}
\end{equation*}
	
\noindent	$(3)$ In the region $l\in[t+c_2^2 t^{2d_2s},2t+c_2^2t^{2d_2s}],$
\begin{equation*}
\begin{split}
I_3:&=\int_{t+c_2^2 t^{2d_2s}}^{2t+c_2^2t^{2d_2s}}\frac{l^b}{(l-t)^{\frac{a}{2s}}}F\left(\frac{c_1^2 l^{2d_1}}{(l-t)^{\frac{1}{s}}},\frac{c_2^2 l^{2d_2}}{(l-t)^{\frac{1}{s}}}\right)\,dl\\&\lesssim\int_{t+c_2^2 t^{2d_2s}}^{2t+c_2^2t^{2d_2s}}\frac{t^b}{(l-t)^{\frac{a}{2s}}}F\left(\frac{c_1^2 l^{2d_1}(2+c_{**}^2)^{2(d_1)^{-}}}{(l-t)^{\frac{1}{s}}},\frac{c_2^2 l^{2d_2}(2+c_{**}^2)^{2(d_2)^{+}}}{(l-t)^{\frac{1}{s}}}\right)\,dl\\&\lesssim
\begin{cases}
\int_{t+c_2^2 t^{2d_2s}}^{2t+c_2^2t^{2d_2s}}\frac{t^b}{(l-t)^{\frac{a}{2s}}}\quad \left(\frac{t^{2d_2}}{(l-t)^{\frac{1}{s}}}\right)^{\frac{n-a}{2}}dl &\text{if~\  $a\in\{n-2s,2s+\alpha,0\},$}\\	
\int_{t+c_2^2 t^{2d_2s}}^{2t+c_2^2t^{2d_2s}}\frac{t^b}{(l-t)^{\frac{a}{2s}}}\quad \left(\frac{t^{2d_1}}{(l-t)^{\frac{1}{s}}}\right)^{\frac{n-a}{2}}dl &\text{if~\  $a\in\{n+2s,n+s\},$}	
\end{cases}
\\&\lesssim
\begin{cases}
t^{b+d_1(2s-a)}&\text{if~\  $a\in\{n+2s,n+s,n-2s,2s+\alpha\},$}\\
t^{b+d_2(2s-a)}&\text{if~\  $a=0.$}
\end{cases}
\end{split}
\end{equation*}
	
\noindent	$(4)$ In the region $l\in[2t+c_2^2t^{2d_2s},+\infty),$ for $a\in\{0,n-2s,2s+\alpha\},$ we have
\begin{equation*}
\begin{split}
I_4:&=\int_{2t+c_2^2 t^{2d_2s}}^{\infty}\frac{l^b}{(l-t)^{\frac{a}{2s}}}F\left(\frac{c_1^2 l^{2d_1}}{(l-t)^{\frac{1}{s}}},\frac{c_2^2 l^{2d_2}}{(l-t)^{\frac{1}{s}}}\right)\,dl\\&\lesssim\int_{2t+c_2^2 t^{2d_2s}}^{\infty} l^{b-\frac{a}{2s}}F\left(\frac{c_1^2 l^{2d_1}}{(2l)^{\frac{1}{s}}},\frac{c_2^2 l^{2d_2}}{(\frac{1}{2}l)^{\frac{1}{s}}}\right)\,dl\\&\lesssim
\begin{cases}
\int_{2t+c_2^2 t^{2d_2s}}^{\infty}l^{b-\frac{a}{2s}}\cdot l^{(2d_2-\frac{1}{s})\frac{n-a}{2}}\,dl \quad &\mathrm{if}~ a\in\{n-2s,2s+\alpha,0\},\\
\int_{2t+c_2^2 t^{2d_2s}}^{\infty}l^{b-\frac{a}{2s}}\cdot l^{(2d_1-\frac{1}{s})\frac{n-a}{2}}\,dl \quad &\mathrm{if}~ a\in\{n+2s,n+s\},
\end{cases}
\\&\lesssim
\begin{cases}
t^{b+d_2(2s-a)},\quad &\mathrm{if}~ a\in\{n-2s,2s+\alpha,0\},\\
t^{b+d_1(2s-a)},\quad &\mathrm{if}~ a\in\{n+2s,n+s\},
\end{cases}
\end{split}
\end{equation*}
where we have used
\begin{equation*}
\begin{cases}
\frac{n}{2s}-b+d_2(a-n)>1~&\mbox{if}~\quad~ a\in\{n-2s,2s+\alpha,0\},\\
\frac{n}{2s}-b+d_1(a-n)>1~&\mbox{if}~\quad~
a\in\{n+2s,n+s\}.
\end{cases}
\end{equation*}
Combining with the estimations from $I_1$ to $I_4,$ we see that
\begin{equation*}
\mathcal{T}^{out}\left[\frac{t^b}{|x|^a}1_{\{c_1t^{d_1}\le|x|\le c_2t^{d_2}\}}\right](0,t)\le C
\begin{cases}
t^{b+d_1(2s-a)}&\text{if $a\in\{n+2s,n+s,n-2s,2s+\alpha\},$}\\
t^{b+d_2(2s-a)}&\text{if $a=0.$}\\
\end{cases}
\end{equation*}
In particular, if $c_1=0$ and $a=0$, by repeating almost the same argument as above we get that
\begin{equation}
\label{b.2}
\mathcal{T}^{out}\left[\frac{t^b}{|x|^a}1_{\{|x|\le c_2 t^{d_2}\}}\right](0,t)\lesssim t^{b+2sd_2}.
\end{equation}
Note that $K_s(x,t)$ and $\frac{t^b}{|x|^a}1_{\{c_1 t^{d_1}\le|x|\le c_2t^{d_2}\}}$ are both decreasing functions for each time slice. Using Hardy-Little-wood rearrangement inequality,  we know that
\begin{equation*}
u(x,t)\le u(0,t).
\end{equation*}
Hence,
\begin{equation}
\label{b.3}
u(x,t)\lesssim
\begin{cases}
t^{b+d_1(2s-a)}&\text{if $a\in\{2s+\alpha,n+2s,n-2s,n+s\},$}\\
t^{b+d_2(2s-a)}&\text{if $a=0.$}
\end{cases}
\end{equation}
	
Next, we study the behavior of $u(x,t)$ for  $|x|\ge 2c_2|2t|^{d_2},d_2>0, a\in\{n-2s,2s+\alpha,0\}.$ In this case, we have
$\left(\frac{|x|}{2c_2}\right)^{\frac{1}{d_2}}\ge 2t$ and divide
\begin{equation*}	[t,+\infty)=\left[t,\left(\frac{|x|}{2c_2}\right)^{\frac{1}{d_2}}\right]
\cup\left[\left(\frac{|x|}{2c_2}\right)^{\frac{1}{d_2}},\left(\frac{2|x|}{c_2}\right)^{\frac{1}{d_2}}\right]
\cup\left[\left(\frac{2|x|}{c_2}\right)^{\frac{1}{d_2}},+\infty\right).
\end{equation*}
Then we can write
$$u(x,t)=I_5+I_6+I_7,$$
where
\begin{equation*}
\begin{split}		I_5(x,t):&=\int_{t}^{\left(\frac{|x|}{2c_2}\right)^{\frac{1}{d_2}}}
\int_{\mathbb{R}^n}\frac{1}{(l-t)^{\frac{n}{2s}}}\frac{1}{\left(1+\frac{|x-z|^2}{(l-t)^{\frac{1}{s}}}\right)
^{\frac{n+2s}{2}}}\frac{l^b}{|z|^a}1_{\{c_1l^{d_1}\le|z|\le c_2 l^{d_2}\}}\,dzdl\\& \lesssim\int_{t}^{\infty}\int_{\mathbb{R}^n}\frac{1}{(l-t)^{\frac{n}{2s}}}
\frac{1}{\left(1+\frac{|x|^2}{4(l-t)^{\frac{1}{s}}}\right)^{\frac{n+2s}{2}}}\frac{l^b}{|z|^a}
1_{\{c_1l^{d_1}\le|z|\le c_2 l^{d_2}\}}\,dzdl\\&\lesssim\int_{t}^{\infty}\int_{c_1l^{d_1}}^{c_2l^{d_2}}\frac{1}{(l-t)^{\frac{n}{2s}}}
\frac{1}{\left(1+\frac{|x|^2}{4(l-t)^{\frac{1}{s}}}\right)^{\frac{n+2s}{2}}}l^b r^{n-1-a}\,drdl\\&\lesssim\int_{t}^{\infty}\frac{1}{(l-t)^{\frac{n}{2s}}}
\frac{1}{\left(1+\frac{|x|^2}{4(l-t)^{\frac{1}{s}}}\right)^{\frac{n+2s}{2}}}l^{b+d_2(n-a)}\,dl\\&\lesssim t^{b+d_2(n-a)}|x|^{2s-n} \int_{0}^{\infty}\frac{1}{(1+y)^{\frac{n+2s}{2}}}y^{\frac{n}{2}-s-1}\,dy\lesssim t^{b+d_2(n-a)}|x|^{2s-n},
\end{split}
\end{equation*}
\begin{equation*}
\begin{split}	I_6(x,t):&=\int_{\left(\frac{|x|}{2c_2}\right)^{\frac{1}{d_2}}}^{\left(\frac{2|x|}{c_2}\right)^{\frac{1}{d_2}}}
\int_{\mathbb{R}^n}\frac{1}{(l-t)^{\frac{n}{2s}}}\frac{1}{\left(1+\frac{|x-z|^2}{(l-t)^{\frac{1}{s}}}\right)
^{\frac{n+2s}{2}}}\frac{l^b}{|z|^a}1_{\{c_1l^{d_1}\le|z|\le c_2 l^{d_2}\}}\,dzdl\\&\lesssim|x|^{\frac{b}{d_2}}\int_{\left(\frac{|x|}{2c_2}\right)^{\frac{1}{d_2}}}
^{\left(\frac{2|x|}{c_2}\right)^{\frac{1}{d_2}}}\int_{\mathbb{R}^n}\frac{1}{(l-t)^{\frac{n}{2s}}}
\frac{1}{\left(1+\frac{|x-z|^2}{(l-t)^{\frac{1}{s}}}\right)^{\frac{n+2s}{2}}}\frac{1}{|z|^a}1_{\{|z|\le 2|x|\}}\,dzdl\\&\lesssim|x|^{\frac{b}{d_2}}\cdot|x|^{n-a}
\int_{\left(\frac{|x|}{2c_2}\right)^{\frac{1}{d_2}}}^{\left(\frac{2|x|}{c_2}\right)^{\frac{1}{d_2}}}
\frac{1}{(l-t)^{\frac{n}{2s}}}\,dl\lesssim t^{b+d_2(n-a)}|x|^{2s-n},
\end{split}
\end{equation*}
\begin{equation*}
\begin{split}
I_7(x,t):&=\int_{\left(\frac{2|x|}{c_2}\right)^{\frac{1}{d_2}}}^{+\infty}\int_{\mathbb{R}^n}
\frac{1}{(l-t)^{\frac{n}{2s}}}\frac{1}{\left(1+\frac{|x-z|^2}{(l-t)^{\frac{1}{s}}}\right)
^{\frac{n+2s}{2}}}\frac{l^b}{|z|^a}1_{\{c_1l^{d_1}\le|z|\le c_2 l^{d_2}\}}\,dzdl\\&\lesssim\int_{\left(\frac{2|x|}{c_2}\right)^{\frac{1}{d_2}}}^{+\infty}
l^{b-\frac{n}{2s}+d_2(n-a)}\,dl\lesssim t^{b+d_2(n-a)}|x|^{2s-n}.
\end{split}
\end{equation*}
Hence
\begin{equation}
\label{b.4}
u(x,t)\lesssim t^{b+d_2(n-a)}|x|^{2s-n}.
\end{equation}
When  $|x|\ge 2c_2|t|^{d_2},d_2<0, a\in\{n-2s,2s+\alpha,0\},$ we find that
\begin{equation*}
\frac{1}{2}|x|\le|x-z|\le 2|x|\quad\mbox{for}\quad|z|\le c_2l^{d_2}~\mathrm{and}~l\ge t.
\end{equation*}
Then we can deduce that
\begin{equation}
\label{b.5}
\begin{split}		u(x,t)&\lesssim\int_{t}^{\infty}\int_{\mathbb{R}^n}\frac{1}{(l-t)^{\frac{n}{2s}}}
\frac{1}{\left(1+\frac{|x|^2}{4(l-t)^{\frac{1}{s}}}\right)^{\frac{n+2s}{2}}}
\frac{l^b}{|z|^a}1_{\{c_1s^{d_1}\le|z|\le c_2 l^{d_2}\}}\,dzdl\\&\lesssim\int_{t}^{\infty}\int_{c_1l^{d_1}}^{c_2l^{d_2}}\frac{1}{(l-t)^{\frac{n}{2s}}}
\frac{1}{\left(1+\frac{|x|^2}{4(l-t)^{\frac{1}{s}}}\right)^{\frac{n+2s}{2}}}l^b r^{n-1-a}\,drdl\\&\lesssim\int_{t}^{\infty}\frac{1}{(l-t)^{\frac{n}{2s}}}
\frac{1}{\left(1+\frac{|x|^2}{4(l-t)^{\frac{1}{s}}}\right)^{\frac{n+2s}{2}}}l^{b+d_2(n-a)}\,dl
\\&\lesssim t^{b+d_2(n-a)}|x|^{2s-n} \int_{0}^{\infty}\frac{1}{(1+y)^{\frac{n+2s}{2}}}y^{\frac{n}{2}-s-1}\,dy\\&\lesssim t^{b+d_2(n-a)}|x|^{2s-n}.
\end{split}
\end{equation}
Combining \eqref{b.4} and \eqref{b.5} we get \eqref{eqb.2}, i.e.,
\begin{equation}
\label{b.8}
u(x,t)\lesssim t^{b+d_2(n-a)}|x|^{2s-n} \quad\mbox{for}\quad \begin{cases}
|x|>2c_2|2t|^{d_2}~ &\text{if\quad $0<d_2\le\frac{1}{2s},$}\\
|x|>2c_2|t|^{d_2}~ &\text{if\quad $d_2<0,$}\\
\end{cases}
\end{equation}
in the case of
$$a\in\{0,n-2s,2s+\alpha\},\quad b+d_2(n-a)<0.$$
In a similar way of deriving \eqref{b.8} we also obtain that
\begin{equation}
\label{b.9}
u(x,t)\lesssim t^{b+d_1(n-a)}|x|^{2s-n}\quad\mbox{for}\quad	\begin{cases}
|x|>2c_1|2t|^{d_1}~ &\text{if\quad $0<d_1\le\frac{1}{2s},$}\\
|x|>2c_1|t|^{d_1}~&\text{if\quad $d_1<0,$}\\
\end{cases}
\end{equation}
in the case of $$a\in\{n+2s,n+s\},\quad b<0,\quad b+d_1(n-a)<0.$$
\end{proof}

\begin{lemma}\label{leb.3}
Suppose that $a\in\{n-2s,2s+\alpha\},$ $d_2\le\frac{1}{2s},$ $0<c_2\le c_{**},$  $\frac{n}{2s}-b>1$ if $b>0$ and \eqref{eqb.1} holds. Then there exists $C$ depending on $n,a,b,d_2,c_{**}$ such that for $t>1,$
\begin{equation*}
\mathcal{T}^{out}\left[\frac{t^b}{|x|^a}1_{\{|x|\le c_2t^{d_2}\}}\right](x,t)\le Ct^b|x|^{2s-a}\quad \mbox{for}\quad  |x|<8c_2t^{d_2}.
\end{equation*}
\end{lemma}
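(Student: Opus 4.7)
The strategy rests on the identity $\int_0^\infty K_s(y,\tau)\,d\tau = C_{n,s}|y|^{-(n-2s)}$, which identifies the time-integrated fractional heat kernel with the Riesz kernel for $(-\Delta)^s$ on $\mathbb{R}^n$. After the substitution $\tau=l-t$ I would rewrite
\begin{equation*}
u(x,t)=\int_0^\infty\!\!\int_{\{|z|\le c_2(\tau+t)^{d_2}\}}K_s(x-z,\tau)\,\frac{(\tau+t)^b}{|z|^a}\,dz\,d\tau,
\end{equation*}
and split the $\tau$-integral at $\tau=t$ into a short-time part $u_{\mathrm{sh}}$ (on $0\le\tau\le t$) and a long-time part $u_{\mathrm{lg}}$ (on $\tau\ge t$).

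For $u_{\mathrm{sh}}$ I would bound $(\tau+t)^b\lesssim t^b$ (trivially for $b\le 0$ using $\tau+t\ge t$, and via $\tau+t\le 2t$ for $b>0$), note that the spatial domain is contained in a single ball $\{|z|\le Ct^{d_2}\}$ for all $\tau\in(0,t)$ (taking $C=c_2 2^{d_2}$ if $d_2\ge 0$ and $C=c_2$ if $d_2\le 0$), and then extend the $\tau$-integral to $(0,\infty)$ using the identity above to obtain
\begin{equation*}
u_{\mathrm{sh}}(x,t)\;\lesssim\;t^b\int_{\{|z|\le Ct^{d_2}\}}\frac{dz}{|x-z|^{n-2s}\,|z|^a}.
\end{equation*}
In the regime $|x|<8c_2 t^{d_2}$ I would decompose this Riesz-type integral into the three pieces $\{|z|\le|x|/2\}$, $\{|x|/2\le|z|\le 2|x|\}$, and $\{2|x|\le|z|\le Ct^{d_2}\}$. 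On the first, $|x-z|\approx|x|$ and $a<n$ give $|x|^{2s-n}\int_{|z|\le|x|/2}|z|^{-a}dz\lesssim|x|^{2s-a}$. On the second, $|z|\approx|x|$ combined with local integrability of $|x-z|^{-(n-2s)}$ gives $|x|^{-a}\cdot|x|^{2s}=|x|^{2s-a}$. On the third, $|x-z|\approx|z|$ reduces matters to $\int_{|z|\ge 2|x|}|z|^{-(n-2s+a)}\,dz\approx|x|^{2s-a}$, which converges at infinity precisely because $a>2s$ (valid for both $a=n-2s$ with $n>6s$ and $a=2s+\alpha$ with $\alpha>0$). Consequently $u_{\mathrm{sh}}\lesssim t^b|x|^{2s-a}$.

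For $u_{\mathrm{lg}}$ I would use the pointwise bound $K_s(x-z,\tau)\le \tau^{-n/(2s)}$ together with $\int_{\{|z|\le c_2(\tau+t)^{d_2}\}}|z|^{-a}\,dz\lesssim(\tau+t)^{d_2(n-a)}$ (valid since $a<n$) to get
\begin{equation*}
u_{\mathrm{lg}}(x,t)\;\lesssim\;\int_t^\infty \tau^{-n/(2s)}(\tau+t)^{b+d_2(n-a)}\,d\tau\;\lesssim\;t^{1-n/(2s)+b+d_2(n-a)},
\end{equation*}
the convergence at infinity being exactly \eqref{eqb.1}. Since $a>2s$ and $|x|\le 8c_2 t^{d_2}$ imply $|x|^{2s-a}\gtrsim t^{d_2(2s-a)}$, dominating this bound by $t^b|x|^{2s-a}$ reduces algebraically to $1-n/(2s)\le -d_2(n-2s)$, i.e.\ $d_2\le 1/(2s)$, which is assumed. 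The main obstacle will be the careful bookkeeping in the three-region split and confirming that each of $a<n$, $a>2s$, $d_2\le 1/(2s)$, and \eqref{eqb.1} is deployed precisely where it is needed with constants independent of $t$; once those sharp exponent matches are verified, both $u_{\mathrm{sh}}$ and $u_{\mathrm{lg}}$ land on $Ct^b|x|^{2s-a}$.
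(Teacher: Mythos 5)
Your proof is correct, and it is organized genuinely differently from the paper's. The paper attacks $u(x,t)$ by splitting only in space into the three regions $\{|z|\le|x|/2\}$, $\{|x|/2\le|z|\le2|x|\}$, $\{2|x|\le|z|\le c_2l^{d_2}\}$ (the terms $I_8,I_9,I_{10}$) and then, inside each region, carries the time integral over all of $(t,\infty)$ via substitutions of the form $y=|x|^2/(l-t)^{1/s}$; this is where the auxiliary hypothesis $\tfrac{n}{2s}-b>1$ for $b>0$ gets used (to kill the term $|x|^{n-a}t^{b-\frac{n}{2s}+1}$ in $I_8$). You instead split in time first, collapse the short-time piece to a static Riesz potential via $\int_0^\infty K_s(y,\tau)\,d\tau=C|y|^{2s-n}$ (a direct computation with the explicit kernel $K_s(y,\tau)=\tau(\tau^{1/s}+|y|^2)^{-\frac{n+2s}{2}}$, convergent since $n>2s$), and only then run the same three-region elliptic decomposition on a time-independent integral; the long-time piece is handled wholesale by $K_s\le\tau^{-n/(2s)}$, with convergence given exactly by \eqref{eqb.1} and the final comparison to $t^b|x|^{2s-a}$ reducing to $d_2\le\frac{1}{2s}$. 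Your reorganization buys a cleaner separation of the elliptic and parabolic parts of the estimate and, as a bonus, does not invoke the hypothesis $\tfrac{n}{2s}-b>1$ at all; the paper's version is more uniform with the style of the neighboring kernel lemmas in Appendix B. All the exponent checks in your sketch ($a<n$ in regions 1 and 3 of the Riesz integral and in the long-time volume count, $a>2s$ for convergence at infinity in region 3, $t>1$ for the final power comparison) are deployed correctly.
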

\begin{proof}
We write
\begin{equation*}
\begin{split}	u(x,t)&=\int_{t}^{\infty}\int_{\mathbb{R}^n}\frac{1}{(l-t)^{\frac{n}{2s}}}
\frac{1}{\left(1+\frac{|x-z|^2}{(l-t)^{\frac{1}{s}}}\right)^{\frac{n+2s}{2}}}\frac{l^b}{|z|^a}1_{\{|z|\le c_2l^{d_2}\}}\,dzdl\\&\lesssim\int_{t}^{\infty}\int_{\mathbb{R}^n}\frac{1}{(l-t)^{\frac{n}{2s}}}
\frac{1}{\left(1+\frac{|x-z|^2}{(l-t)^{\frac{1}{s}}}\right)^{\frac{n+2s}{2}}}\frac{l^b}{|z|^a}
\left(1_{\{|z|\le \frac{1}{2}|x|\}}+1_{\{\frac{1}{2}|x|\le|z|\le 2|x|\}}+1_{\{2|x|\le|z|\le c_2l^{d_2}\}}\right)\,dzdl\\
&=:I_8+I_9+I_{10}
\end{split}
\end{equation*}
Then
\begin{equation*}
\begin{split}	I_8:=&\int_{t}^{\infty}\int_{\mathbb{R}^n}\frac{1}{(l-t)^{\frac{n}{2s}}}
\frac{1}{\left(1+\frac{|x-z|^2}{(l-t)^{\frac{1}{s}}}\right)^{\frac{n+2s}{2}}}\frac{l^b}{|z|^a}1_{\{|z|\le \frac{1}{2}|x|\}}\,dzdl\\&\lesssim\int_{t}^{\infty}\int_{\mathbb{R}^n}
\frac{1}{(l-t)^{\frac{n}{2s}}}\frac{1}{\left(1+\frac{|x|^2}{4(l-t)^{\frac{1}{s}}}\right)^{\frac{n+2s}{2}}}
\frac{l^b}{|z|^a}1_{\{|z|\le \frac{1}{2}|x|\}}\,dzdl\\&\lesssim|x|^{n-a}\int_{t}^{\infty}\frac{1}{(l-t)^{\frac{n}{2s}}}
\frac{1}{\left(1+\frac{|x|^2}{4(l-t)^{\frac{1}{s}}}\right)^{\frac{n+2s}{2}}}l^b\,dl\\
&\lesssim|x|^{2s-a}\int_{0}^{\infty}\frac{1}{(1+y)^{\frac{n+2s}{2}}}y^{\frac{n}{2}-s-1}
\left(t+\frac{|x|^{2s}}{y^s}\right)^b\,dy\\&\lesssim t^b |x|^{2s-a}\int_{\frac{|x|^2}{t^{\frac{1}{s}}}}^{\infty}
\frac{1}{(1+y)^{\frac{n+2s}{2}}}y^{\frac{n}{2}-s-1}\,dy+|x|^{2sb+2s-a}
\int_{0}^{\frac{|x|^2}{t^{\frac{1}{s}}}}\frac{1}{(1+y)^{\frac{n+2s}{2}}}y^{-sb+\frac{n}{2}-s-1}\,dy\\
&\lesssim t^b|x|^{2s-a}+|x|^{n-a}t^{b-\frac{n}{2s}+1}\lesssim t^b|x|^{2s-a},
\end{split}
\end{equation*}
where we have used the condition that $\frac{n}{2s}-b>1$ if $b>0.$ The case $b<0$ is similar and we omit the details. Concerning $I_9$ and $I_{10}$ we have
\begin{equation*}
\begin{split} I_9:&=\int_{t}^{\infty}\int_{\mathbb{R}^n}\frac{1}{(l-t)^{\frac{n}{2s}}}
\frac{1}{\left(1+\frac{|x-z|^2}{(l-t)^{\frac{1}{s}}}\right)^{\frac{n+2s}{2}}}\frac{l^b}{|z|^a}
1_{\{\frac{1}{2}|x|\le|z|\le 2|x|\}}\,dzdl\\&\lesssim|x|^{-a}\int_{t}^{\infty}\int_{\mathbb{R}^n}\frac{1}{(l-t)^{\frac{n}{2s}}}
\frac{1}{\left(1+\frac{|x-z|^2}{(l-t)^{\frac{1}{s}}}\right)^{\frac{n+2s}{2}}}l^b1_{\{|x-z|\le3|x|\}}\,dzdl\\
&\lesssim|x|^{-a}\int_{t}^{\infty}\int_{0}^{3|x|}\frac{1}{(l-t)^{\frac{n}{2s}}}
\frac{1}{\left(1+\frac{r^2}{(l-t)^{\frac{1}{s}}}\right)^{\frac{n+2s}{2}}}l^br^{n-1}\,drdl\\
&=|x|^{-a}\int_{t}^{\infty}\int_{0}^{\frac{9|x|^2}{(l-t)^{\frac{1}{s}}}}\frac{1}{(1+y)^{\frac{n+2s}{2}}}l^b y^{\frac{n}{2}-1}\,dydl\\&\lesssim |x|^{-a}\int_{9^s|x|^{2s}+t}^{+\infty}l^b\left(\frac{9|x|^2}{(l-t)^{\frac{1}{s}}}\right)^{\frac{n}{2}}\,dl
+|x|^{-a}\int_{t}^{9^s|x|^{2s}+t}l^b\,dl\\&=t^b|x|^{n-a}\int_{9^s|x|^{2s}}^{\infty}
\frac{1}{y^{\frac{n}{2s}}}\,dy+|x|^{n-a}\int_{9^s|x|^{2s}}^{\infty}y^{b-\frac{n}{2s}}\,dy\cdot1_{\{b>0\}}
+t^{b}|x|^{2s-a}\\&\lesssim t^b|x|^{2s-a}
\end{split}
\end{equation*}
and
\begin{equation*}
\begin{split}	I_{10}:&=\int_{t}^{\infty}\int_{\mathbb{R}^n}\frac{1}{(l-t)^{\frac{n}{2s}}}
\frac{1}{\left(1+\frac{|x-z|^2}{(l-t)^{\frac{1}{s}}}\right)^{\frac{n+2s}{2}}}\frac{l^b}{|z|^a}
1_{\{2|x|\le|z|\le c_2l^{d_2}\}}\,dzdl\\&\lesssim\int_{t}^{\infty}\int_{\mathbb{R}^n}\frac{1}{(l-t)^{\frac{n}{2s}}}
\frac{1}{\left(1+\frac{|z|^2}{4(l-t)^{\frac{1}{s}}}\right)^{\frac{n+2s}{2}}}\frac{l^b}{|z|^a}
1_{\{2|x|\le|z|\le c_2 l^{d_2}\}}\,dzdl\\&\lesssim\int_{t}^{\infty}\int_{2|x|}^{c_2l^{d_2}}\frac{1}{(l-t)^{\frac{n}{2s}}}
\frac{1}{\left(1+\frac{r^2}{4(l-t)^{\frac{1}{s}}}\right)^{\frac{n+2s}{2}}}l^br^{n-1-a}\,drdl\\
&\lesssim\int_{t}^{\infty}
\int_{\frac{4|x|^2}{(l-t)^{\frac{1}{s}}}}^{\frac{c_2^2l^{2d_2}}{(l-t)^{\frac{1}{s}}}}
\frac{1}{(l-t)^{\frac{a}{2s}}}\frac{1}{(1+\frac{y}{4})^{\frac{n+2s}{2}}}l^b y^{\frac{n-2-a}{2}}\,dydl\\&\lesssim\int_{|x|^{2s}+t}^{\infty}\frac{l^{b+d_2(n-a)}}{(l-t)^{\frac{n}{2s}}}\,dl
+t^b\int_{t}^{|x|^{2s}+t}\frac{1}{(l-t)^{\frac{a}{2s}}}
\left(\frac{|x|^2}{(l-t)^{\frac{1}{s}}}\right)^{-\frac{a}{2}}\,dl\\&\lesssim t^{b}|x|^{2s-a}.
\end{split}
\end{equation*}
Collecting the estimations of $I_8,I_9,I_{10}$ we get that
\begin{equation}
\label{b.10}
\mathcal{T}^{out}\left[\frac{t^b}{|x|^a}1_{\{|x|\le c_2t^{d_2}\}}\right]\lesssim t^b|x|^{2s-a}\quad \mbox{for}\quad |x|<c_2t^{d_2}.
\end{equation}
\end{proof}

\begin{lemma}
\label{leb.5}
If $a=n-2s,-2<b<0,$ then
\begin{equation*}
\begin{split}
\mathcal{T}^{out}\left[\frac{t^b}{|x|^a}1_{\{|x|\ge t^{\frac{1}{2s}}\}}\right](x,t)&
\lesssim t^{1+b-\frac{a}{2s}}1_{\{|x|\le t^{\frac{1}{2s}}\}}+1_{\{|x|\ge t^{\frac{1}{2s}}\}}|x|^{-a}\cdot
\begin{cases}
t^{1+b}&\text{if $b<-1,$}\\
1+\ln\left(\frac{|x|^{2s}}{t}\right)&\text{if $b=-1,$}\\
(|x|^{2s})^{1+b}&\text{if $b>-1.$}
\end{cases}
\end{split}
\end{equation*}
If $a=n+2s$ and $b<0,$ then
\begin{equation*}
\mathcal{T}^{out}\left[\frac{t^b}{|x|^a}1_{\{|x|\ge t^{\frac{1}{2s}}\}}\right](x,t)=t^{1+b-\frac{a}{2s}}1_{\{|x|\le t^{\frac{1}{2s}}\}}+1_{\{|x|\ge t^{\frac{1}{2s}}\}}t^b|x|^{2s-a}.
\end{equation*}
\end{lemma}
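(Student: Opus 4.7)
The strategy mirrors the rearrangement/direct-estimate scheme used in Lemmas \ref{leb.1} and \ref{leb.3}, but the support condition $\{|z|\ge l^{1/(2s)}\}$ depends on $l$, which forces separate treatment of the temporal integral near and away from the boundary threshold $|x|=t^{1/(2s)}$. Throughout the plan I write $a=n-2s$; the case $a=n+2s$ is handled at the end and is strictly easier due to the stronger spatial decay.

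First I would treat the inner region $|x|\le t^{1/(2s)}$. In the support of the integrand one has $|z|\ge l^{1/(2s)}\ge t^{1/(2s)}\ge |x|$, and (after peeling off a thin boundary shell which is absorbed into the constant) $|x-z|\gtrsim |z|$. Thus
$$\mathcal{T}^{out}\!\left[\tfrac{t^b}{|z|^{a}}1_{\{|z|\ge l^{1/(2s)}\}}\right](x,t)\lesssim \int_t^\infty (l-t)\,l^b\!\int_{l^{1/(2s)}}^{\infty}\!\frac{r^{2s-1}\,dr}{[(l-t)^{1/s}+r^{2}]^{(n+2s)/2}}\,dl.$$
The substitution $r=(l-t)^{1/(2s)}\rho$ and the elementary bound $\int_{M}^{\infty}\rho^{2s-1}(1+\rho^2)^{-(n+2s)/2}d\rho\approx M^{-n}$ for $M\ge 1$ show that the inner integral is $\approx l^{-n/(2s)}$ uniformly in $l\ge t$ (since the lower limit $(l/(l-t))^{1/(2s)}\ge 1$). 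Inserting this and splitting $[t,\infty)=[t,2t]\cup[2t,\infty)$ produces $\int_t^{\infty}(l-t)l^{b-n/(2s)}dl\lesssim t^{2+b-n/(2s)}=t^{1+b-a/(2s)}$; the condition $n>6s$ and $b>-2$ guarantee integrability at infinity.

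For the outer region $|x|\ge t^{1/(2s)}$, I split the spatial integral into three pieces: (i) the near-$x$ annulus $\{|x|/2\le|z|\le 2|x|\}$; (ii) the near-zero region $\{l^{1/(2s)}\le|z|\le|x|/2\}$; (iii) the far region $\{|z|\ge 2|x|\}$. Region (i) will produce the main term: because $|z|\approx|x|$ and $|z-x|$ ranges over $B_{3|x|}(0)$, the substitution $w=z-x$ and the change of variables $w=(l-t)^{1/(2s)}\rho$ give, for $l-t\le (2|x|)^{2s}$,
$$\int_{(i)}\frac{dz}{|z|^{n-2s}[(l-t)^{1/s}+|x-z|^2]^{(n+2s)/2}}\approx \frac{|x|^{-(n-2s)}}{l-t}.$$
Since $|z|\ge l^{1/(2s)}$ enforces $l\le(2|x|)^{2s}$, region (i) yields
$$|x|^{-a}\int_t^{(2|x|)^{2s}} l^{b}\,dl,$$
and the elementary evaluation of $\int_t^{T}l^b\,dl$ with $T=(2|x|)^{2s}\gg t$ produces precisely the three cases: $\approx T^{1+b}\asymp(|x|^{2s})^{1+b}$ when $b>-1$, $\approx \ln(T/t)$ when $b=-1$, and $\approx t^{1+b}/|1+b|$ when $-2<b<-1$. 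Regions (ii) and (iii) will be shown to be subdominant: in (ii), the denominator is $\approx |x|^{n+2s}$ and a direct calculation bounds the contribution by $|x|^{-(n+2s)}\int_t^T (l-t)(T-l)l^b\,dl\lesssim T^2\cdot (\text{case-by-case in }b)$, which is dominated by (i) after using $t\le T$; in (iii), I repeat Step~1 with $l^{1/(2s)}$ replaced by $\max(2|x|,l^{1/(2s)})$, obtaining $\lesssim|x|^{-n+4s+2sb}$, which in each of the three ranges of $b$ is $\le$ the bound produced by (i).

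The case $a=n+2s$ follows the identical scheme; because $|z|^{-a}$ now decays two powers faster, the inner-region computation yields $\int_t^\infty(l-t)l^{b-n/(2s)-2}dl\lesssim t^{b-n/(2s)}=t^{1+b-a/(2s)}$, while in the outer region the near-$x$ contribution becomes $|x|^{-(n+2s)}\int_t^{(2|x|)^{2s}}l^b\,dl\lesssim |x|^{-n}t^b$ (using $b<0$ and $t\le |x|^{2s}$ in each of the three subcases to absorb the dependence on $|x|^{2s}$). The main technical obstacle is the outer region for $a=n-2s$: the three-case statement of the lemma corresponds to whether $\int_t^T l^b\,dl$ is dominated by its lower endpoint, its upper endpoint, or produces a logarithm, and verifying (ii) and (iii) are dominated in every case—in particular checking the inequality $(|x|^{2s}/t)^{1+b}\le 1$ for $b<-1$—requires a careful but routine bookkeeping of exponents. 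Finally, consistency at the boundary $|x|=t^{1/(2s)}$ is verified directly: all three right-hand sides reduce to $t^{1+b-a/(2s)}$.
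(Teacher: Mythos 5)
Your plan is correct and follows essentially the same route as the paper's proof: split according to $|x|\lessgtr t^{\frac{1}{2s}}$, use $|x-z|\gtrsim|z|$ in the inner region, and in the outer region decompose the $z$-integral into the near-origin piece, the annulus $|z|\approx|x|$ (whose contribution $|x|^{-a}\int_t^{(2|x|)^{2s}}l^b\,dl$ produces the three cases), and the far piece. The only differences are bookkeeping: the paper additionally splits the time interval at $l\approx|x|^{2s}$ and handles the transition annulus $\tfrac12 t^{\frac{1}{2s}}\le|x|\le 4t^{\frac{1}{2s}}$ by the crude kernel-mass bound (which is exactly the device that makes your ``thin boundary shell'' remark rigorous), and your claimed intermediate bound $|x|^{-n+4s+2sb}$ for the far region is accurate only for $b>-1$ (for $b\le -1$ that region is bounded by the same three-case expression as the annulus), neither of which affects the conclusion.
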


\begin{proof}
We shall prove the conclusion by studying the term $\mathcal{T}^{out}\left[\frac{t^b}{|x|^a}1_{\{|x|\ge t^{\frac{1}{2s}}\}}\right](x,t)$ for $|x|\le\frac{1}{2}t^{\frac{1}{2s}},$ $\frac{1}{2}t^{\frac{1}{2s}}\le|x|\le4t^{\frac{1}{2s}}$ and
$|x|\ge 4t^{\frac{1}{2s}}$ respectively.
\medskip

\noindent (1)  If $|x|\le\frac{1}{2}t^{\frac{1}{2s}},$ we find that $|x-z|\ge\frac{1}{2}|z|$ for $|z|\ge l^{\frac{1}{2s}}\ge t^{\frac{1}{2s}}.$ Then applying the arguments of Lemma \ref{leb.1} we have
\begin{equation}
\label{b.11}	u(x,t)\lesssim\int_{t}^{\infty}\int_{\mathbb{R}^n}\frac{1}{(l-t)^{\frac{n}{2s}}}
\frac{1}{\left(1+\frac{|z|^2}{4(l-t)^{\frac{1}{s}}}\right)^{\frac{n+2s}{2}}}\frac{l^b}{|z|^a}1_{\{|z|\ge l^{\frac{1}{2s}}\}}\,dzdl\lesssim t^{1+b-\frac{a}{2s}}.
\end{equation}
	
\noindent (2) If $|x|\ge 4t^{\frac{1}{2s}},$ we divide $$[t,\infty)=\left[t,\frac{2}{4^{2s}}|x|^{2s}\right]\cup
\left[\frac{2}{4^{2s}}|x|^{2s},2\cdot4^{2s}|x|^{2s}\right]\cup\left[2\cdot4^{2s}|x|^{2s},\infty\right).$$
For $t\in\left[t,\frac{2}{4^{2s}}|x|^{2s}\right]$, we write
\begin{equation*}
\begin{aligned}
I_{11}&=\int_{t}^{\frac{2}{4^{2s}}|x|^{2s}}\int_{\mathbb{R}^n}\frac{1}{(l-t)^{\frac{n}{2s}}}
\frac{1}{\left(1+\frac{|x-z|^2}{(l-t)^{\frac{1}{s}}}\right)^{\frac{n+2s}{2}}}\frac{l^b}{|z|^a}1_{\{|z|\ge l^{\frac{1}{2s}}\}}\,dzdl\\&=\int_{t}^{\frac{2}{4^{2s}}|x|^{2s}}\int_{\mathbb{R}^n}
\frac{1}{(l-t)^{\frac{n}{2s}}}\frac{1}{\left(1+\frac{|x-z|^2}{(l-t)^{\frac{1}{s}}}\right)^{\frac{n+2s}{2}}}
\frac{l^b}{|z|^a}1_{\{l^{\frac{1}{2s}}\le|z|\le\frac{1}{2}|x|\}}\,dzdl\\
&\quad+\int_{t}^{\frac{2}{4^{2s}}|x|^{2s}}\int_{\mathbb{R}^n}\frac{1}{(l-t)^{\frac{n}{2s}}}
\frac{1}{\left(1+\frac{|x-z|^2}{(l-t)^{\frac{1}{s}}}\right)^{\frac{n+2s}{2}}}
\frac{l^b}{|z|^a}1_{\{\frac{1}{2}|x|\le|z|\le2|x|\}}\,dzdl\\
&\quad+\int_{t}^{\frac{2}{4^{2s}}|x|^{2s}}\int_{\mathbb{R}^n}\frac{1}{(l-t)^{\frac{n}{2s}}}
\frac{1}{\left(1+\frac{|x-z|^2}{(l-t)^{\frac{1}{s}}}\right)^{\frac{n+2s}{2}}}
\frac{l^b}{|z|^a}1_{\{|z|\ge2|x|\}}\,dzdl\\&=:J_1+J_2+J_3.
\end{aligned}
\end{equation*}
Then for $a=n-2s,~-2<b<0,$
\begin{equation}
\label{b.12}
\begin{aligned}	J_1:&=\int_{t}^{\frac{2}{4^{2s}}|x|^{2s}}\int_{\mathbb{R}^n}\frac{1}{(l-t)^{\frac{n}{2s}}}
\frac{1}{\left(1+\frac{|x-z|^2}{(l-t)^{\frac{1}{s}}}\right)^{\frac{n+2s}{2}}}
\frac{l^b}{|z|^a}1_{\{l^{\frac{1}{2s}}\le|z|\le\frac{1}{2}|x|\}}\,dzdl\\
&\lesssim\int_{t}^{\frac{2}{4^{2s}}|x|^{2s}}\frac{1}{(l-t)^{\frac{n}{2s}}}
\frac{1}{\left(1+\frac{|x|^2}{4(l-t)^{\frac{1}{s}}}\right)^{\frac{n+2s}{2}}}l^b|x|^{n-a}\,dl\\
&\lesssim|x|^{n-a}\int_{t}^{\frac{2}{4^{2s}}|x|^{2s}}\frac{1}{(l-t)^{\frac{n}{2s}}}
\frac{l^b}{\left(1+\frac{|x|^2}{4(l-t)^{\frac{1}{s}}}\right)^{\frac{n+2s}{2}}}\,dl \lesssim |x|^{2sb+2s-a},
\end{aligned}
\end{equation}
While if $a=n+2s,b<0,$ then
\begin{equation}
\label{b.13}
\begin{aligned}
J_1&\lesssim\int_{t}^{\frac{2}{4^{2s}}|x|^{2s}}\frac{1}{(l-t)^{\frac{n}{2s}}}
\frac{1}{\left(1+\frac{|x|^2}{4(l-t)^{\frac{1}{s}}}\right)^{\frac{n+2s}{2}}}l^b|x|^{n-a}\,dl\\
&\lesssim\int_{\frac{|x|^2}{4
\left(\frac{2}{4^{2s}}|x|^{2s}-t\right)^{\frac{1}{s}}}}^{\infty}\frac{|x|^{2s-n}
\left(t+\frac{|x|^{2s}}{(4y)^s}\right)^{b+\frac{n-a}{2s}}}{y^{s+1-\frac{n}{2}}(1+y)^{\frac{n+2s}{2}}}dy\\
&\lesssim\int_{\frac{|x|^2}{4t^{\frac{1}{s}}}}^\infty
\frac{|x|^{2s-n}t^{b+\frac{n-a}{2s}}}{y^{s+1-\frac{n}{2}}(1+y)^{\frac{n+2s}{2}}}dy+\int_{\frac{|x|^2}{4
		\left(\frac{2}{4^{2s}}|x|^{2s}-t
		\right)^{\frac{1}{s}}}}^{\frac{|x|^2}{4t^{\frac{1}{s}}}}
\frac{|x|^{2s-n+2sb+n-a}}{y^{sb+\frac{n-a}{2}+s+1-\frac{n}{2}}(1+y)^{\frac{n+2s}{2}}}dy\\
&\lesssim |x|^{-2s-n}t^{2+b+\frac{n-a}{2s}}+|x|^{2s+2sb-a}\max\left\{1,
t^{b+1}|x|^{-2sb-2s}\right\}\lesssim t^b|x|^{2s-a}.
\end{aligned}
\end{equation}
Concerning $J_2$ and $J_3$ we have
\begin{equation}
\label{b.14}
\begin{aligned}		J_2:&=\int_{t}^{\frac{2}{4^{2s}}|x|^{2s}}\int_{\mathbb{R}^n}\frac{1}{(l-t)^{\frac{n}{2s}}}
\frac{1}{\left(1+\frac{|x-z|^2}{(l-t)^{\frac{1}{s}}}\right)^{\frac{n+2s}{2}}}
\frac{l^b}{|z|^a}1_{\{\frac{1}{2}|x|\le|z|\le2|x|\}}\,dzdl\\&\lesssim|x|^{-a}\left(\int_{t}^{2t}
+\int_{2t}^{\frac{2}{4^{2s}}|x|^{2s}}\right)\int_{0}^{3|x|}\frac{1}{(l-t)^{\frac{n}{2s}}}
\frac{1}{\left(1+\frac{|x-z|^2}{(l-t)^{\frac{1}{s}}}\right)^{\frac{n+2s}{2}}}l^b1_{\{|x-z|\le 3|x|\}}\,dzdl\\&=|x|^{-a}\left(\int_{t}^{2t}+\int_{2t}^{\frac{2}{4^{2s}}|x|^{2s}}\right)
\int_{0}^{3|x|}\frac{1}{(l-t)^{\frac{n}{2s}}}\frac{1}{\left(1+\frac{r^2}{(l-t)^{\frac{1}{s}}}\right)
^{\frac{n+2s}{2}}}l^b r^{n-1}\,drdl\\&\lesssim|x|^{-a}\left(\int_{t}^{2t}+\int_{2t}^{\frac{2}{4^{2s}}|x|^{2s}}\right)l^b
\int_{0}^{\frac{9|x|^2}{(l-t)^{\frac{1}{s}}}}\frac{1}{(1+y)^{\frac{n+2s}{2}}}y^{\frac{n}{2}-1}\,dydl
\\&\lesssim|x|^{-a}t^{1+b}+|x|^{-a}\cdot
\begin{cases}
t^{1+b}&\text{if $b<-1,$}\\
1+\ln\left(\frac{|x|^{2s}}{t}\right)&\text{if $b=-1,$}\\
(|x|^{2s})^{1+b}&\text{if $b>-1,$}
\end{cases}
\end{aligned}
\end{equation}
and
\begin{equation}
\label{b.15}
\begin{aligned}	J_3:&=\int_{t}^{\frac{2}{4^{2s}}|x|^{2s}}\int_{\mathbb{R}^n}\frac{1}{(l-t)^{\frac{n}{2s}}}
\frac{1}{\left(1+\frac{|x-z|^2}{(l-t)^{\frac{1}{s}}}\right)^{\frac{n+2s}{2}}}
\frac{l^b}{|z|^a}1_{\{|z|\ge2|x|\}}\,dzdl\\&\lesssim\int_{t}^{\frac{2}{4^{2s}}|x|^{2s}}
\int_{\mathbb{R}^n}\frac{1}{(l-t)^{\frac{n}{2s}}}
\frac{1}{\left(1+\frac{|z|^2}{4(l-t)^{\frac{1}{s}}}\right)^{\frac{n+2s}{2}}}
\frac{l^b}{|z|^a}1_{\{|z|\ge2|x|\}}\,dzdl\\&\lesssim|x|^{-a}\int_{t}^{\frac{2}{4^{2s}}|x|^{2s}}
\int_{2|x|}^{\infty}\frac{1}{(l-t)^{\frac{n}{2s}}}
\frac{1}{\left(1+\frac{r^2}{4(l-t)^{\frac{1}{s}}}\right)^{\frac{n+2s}{2}}}l^b r^{n-1}\,drdl
\\&\lesssim|x|^{-a}t^{1+b}+|x|^{-a}\cdot
\begin{cases}
t^{1+b}&\text{if $b<-1,$}\\
1+\ln\left(\frac{|x|^{2s}}{t}\right)&\text{if $b=-1,$}\\
(|x|^{2s})^{1+b}&\text{if $b>-1.$}
\end{cases}
\end{aligned}
\end{equation}
For $t $ in the other two intervals, we have
\begin{equation}
\label{b.16}
\begin{split}		
I_{12}&=\int_{\frac{2}{4^{2s}}|x|^{2s}}^{2\cdot4^{2s}|x|^{2s}}
\int_{\mathbb{R}^n}\frac{1}{(l-t)^{\frac{n}{2s}}}
\frac{1}{\left(1+\frac{|x-z|^2}{(l-t)^{\frac{1}{s}}}\right)^{\frac{n+2s}{2}}}\frac{l^b}{|z|^a}1_{\{|z|\ge l^{\frac{1}{2s}}\}}\,dzdl\\&\lesssim\int_{\frac{2}{4^{2s}}|x|^{2s}}^{2\cdot4^{2s}|x|^{2s}}
\int_{\mathbb{R}^n}\frac{1}{l^{\frac{n}{2s}}}
\frac{1}{\left(1+\frac{|x-z|^2}{l^{\frac{1}{s}}}\right)^{\frac{n+2s}{2}}}\frac{l^b}{|z|^a}1_{\{|z|\ge \frac{2^{\frac{1}{2s}}}{4}|x|\}}\,dzdl\\&\lesssim|x|^{-n+2sb-a}
\int_{\frac{2}{4^{2s}}|x|^{2s}}^{2\cdot4^{2s}|x|^{2s}}
\int_{\mathbb{R}^n}\frac{1}{\left(1+\frac{|x-z|^2}{l^{\frac{1}{s}}}\right)^{\frac{n+2s}{2}}}1_{\{|z|\ge \frac{2^{\frac{1}{2s}}}{4}|x|\}}\,dzdl \lesssim|x|^{2s+2sb-a},
\end{split}
\end{equation}
and
\begin{equation}
\label{b.17}
\begin{aligned}		I_{13}&=\int_{2\cdot4^{2s}|x|^{2s}}^{\infty}\int_{\mathbb{R}^n}
\frac{1}{(l-t)^{\frac{n}{2s}}}\frac{1}{\left(1+\frac{|x-z|^2}{(l-t)^{\frac{1}{s}}}\right)^{\frac{n+2s}{2}}}
\frac{l^b}{|z|^a}1_{\{|z|\ge l^{\frac{1}{2s}}\}}\,dzdl\\&\lesssim\int_{2\cdot4^{2s}|x|^{2s}}^{\infty}\int_{\mathbb{R}^n}
\frac{1}{l^{\frac{n}{2s}}}\frac{1}{\left(1+\frac{|z|^2}{4l^{\frac{1}{s}}}\right)^{\frac{n+2s}{2}}}
\frac{l^b}{|z|^a}1_{\{|z|\ge l^{\frac{1}{2s}}\}}\,dzdl \lesssim|x|^{2s+2sb-a}.
\end{aligned}
\end{equation}
	
\noindent (3)	If $\frac{1}{2}t^{\frac{1}{2s}}\le |x|\le 4t^{\frac{1}{2s}},$
\begin{equation}
\label{b.18}
\begin{aligned}	u(x,t)&=\int_{t}^{\infty}\int_{\mathbb{R}^n}\frac{1}{(l-t)^{\frac{n}{2s}}}
\frac{1}{\left(1+\frac{|x-z|^2}{(l-t)^{\frac{1}{s}}}\right)^{\frac{n+2s}{2}}}\frac{l^b}{|z|^a}1_{\{|z|\ge l^{\frac{1}{2s}}\}}\,dzdl\\&\lesssim\int_{t}^{\infty}l^{b-\frac{a}{2s}}\int_{0}^{\infty}
\frac{1}{(1+y)^{\frac{n+2s}{2}}}y^{\frac{n}{2}-1}\,dydl \lesssim t^{1+b-\frac{a}{2s}}.
\end{aligned}
\end{equation}
From \eqref{b.11}-\eqref{b.18} we prove the lemma.
\end{proof}

\vspace{1.5cm}

\end{document}